\def\Rc{\mathbb{R}}
\newcommand{\F}{\mathcal{F}}
\newtheorem{thm}{Theorem}
\newtheorem{prop}{Proposition}
\newtheorem{definition}{Definition}
\newtheorem{remark}{Remark}
\begin{document}

\title{
Certain upper bounds on the eigenvalues 
associated with prolate spheroidal wave functions}
\author{Andrei Osipov\footnote{This author's research was supported in part
 by the AFOSR grant \#FA9550-09-1-0241}
\footnote{Yale University, 51 Prospect st, New Haven, CT 06511.
Email: andrei.osipov@yale.edu.
}}
\maketitle

\begin{abstract}
Prolate spheroidal wave functions (PSWFs) 
play an important role in various areas,
from physics (e.g. wave phenomena, fluid dynamics) to 
engineering (e.g. signal processing, filter design). 
One of the principal reasons for the importance of PSWFs
is that they are a natural and efficient tool for
computing with bandlimited functions, that
frequently occur in the abovementioned areas.
This is due to the fact that PSWFs are
the eigenfunctions of 
the integral operator, that represents timelimiting followed by
lowpassing. 
Needless to say, the behavior of this operator is governed by
the decay rate of its eigenvalues. Therefore,
investigation of this decay rate plays a crucial role
in the related theory and applications - for example,
in construction of quadratures, interpolation,
filter design, etc.

The significance
of PSWFs and, in particular, of the decay rate 
of the eigenvalues of the associated integral operator,
was realized at least half a century ago.
Nevertheless, perhaps surprisingly, despite vast numerical experience
and existence of several asymptotic expansions, a non-trivial
explicit upper bound on the magnitude of the eigenvalues
has been missing for decades.

The principal goal of this paper is
to close this gap in the theory of PSWFs.
We analyze the integral operator associated with PSWFs,
to derive fairly tight non-asymptotic
upper bounds on the magnitude of its
eigenvalues.
Our results are illustrated via several numerical experiments.
\end{abstract}

\noindent
{\bf Keywords:} {bandlimited functions, prolate spheroidal
wave functions, eigenvalues}

\noindent
{\bf Math subject classification:} {
33E10, 34L15, 35S30, 42C10, 45C05, 54P05}

\section{Introduction}
\label{sec_intro}

The principal purpose of this paper is to establish and prove
several inequalities involving 
the eigenvalues of a certain integral operator
associated with bandlimited functions
(see Section~\ref{sec_summary} below). While some of these inequalities
are known from ``numerical experience''
(see, for example,
\cite{RokhlinXiaoApprox},
\cite{ProlateLandau1},
\cite{SlepianLambdas}),
their proofs appear to be absent in the literature.

A function $f: \Rc \to \Rc$ is bandlimited of band limit $c>0$, if there
exists a function $\sigma \in L^2\left[-1,1\right]$ such that
\begin{align}
f(x) = \int_{-1}^1 \sigma(t) e^{icxt} \; dt.
\label{eq_intro_f}
\end{align}
In other words, the Fourier transform of a bandlimited function
is compactly supported.
While \eqref{eq_intro_f} defines $f$ for all real $x$, 
one is often interested in bandlimited functions, whose 
argument is confined to an interval, e.g. $-1 \leq x \leq 1$.
Such functions are encountered in physics (wave phenomena,
fluid dynamics), engineering (signal processing), etc.
(see e.g. \cite{SlepianComments}, \cite{Flammer}, \cite{Papoulis}).

About 50 years ago it was observed that the eigenfunctions of
the integral operator $F_c: L^2\left[-1,1\right] \to L^2\left[-1,1\right]$,
defined via the formula
\begin{align}
F_c\left[\varphi\right] \left(x\right) = \int_{-1}^1 \varphi(t) e^{icxt} \; dt,
\label{eq_intro_fc}
\end{align}
provide a natural tool for dealing with bandlimited functions, defined
on the interval $\left[-1,1\right]$. Moreover, it
was observed 
(see \cite{ProlateSlepian1}, \cite{ProlateLandau1}, \cite{ProlateSlepian2})
that the eigenfunctions of $F_c$
are precisely the prolate spheroidal wave functions (PSWFs),
well known from the mathematical physics (see, for example,
\cite{PhysicsMorse}, \cite{Flammer}).
The PSWFs are the eigenfunctions
of the differential operator $L_c$, defined via the formula
\begin{align}
L_c\left[ \varphi \right] \left(x\right)= 
-\frac{d}{dx} \left( (1-x^2) \cdot \frac{d\varphi}{dx}(x) \right) +
c^2 x^2.
\label{eq_intro_lc}
\end{align}
In other words, the integral operator $F_c$ 
commutes with
the differential
operator $L_c$ (see
\cite{ProlateSlepian1}, \cite{Grunbaum}).
This property, being remarkable by itself,
also plays an important role in both the analysis of PSWFs
and the associated numerical algorithms (see, for example,
\cite{Glaser}, \cite{RokhlinXiaoProlate}).

Obviously, the behavior of the operator $F_c$ is governed
by the decay rate of its eigenvalues.
Over the last half a century, several related asymptotic expansions,
as well as results of numerous numerical experiments,
have been published; moreover, 
implications of the decay rate of the eigenvalues to both theory
and applications have been extensively covered
in the literature
- see, for example,
\cite{Yoel}, 
\cite{RokhlinXiaoProlate}, 
\cite{RokhlinXiaoApprox}.
\cite{RokhlinXiaoAsymptotic},
\cite{LandauWidom},
\cite{ProlateSlepian1},
\cite{ProlateLandau1},
\cite{ProlateLandau2},
\cite{ProlateSlepian2},
\cite{ProlateSlepian3},
\cite{SlepianAsymptotic},
\cite{SlepianLambdas},
\cite{Fuchs}.
It is perhaps surprising, however, that a non-trivial
explicit
upper bound on the magnitude of the eigenvalues of $F_c$
has been missing for decades.
This paper closes this gap in the theory of PSWFs.

This paper is mostly devoted to the analysis
of the integral operator $F_c$, defined via \eqref{eq_intro_fc}.
More specifically, several explicit upper bounds for the 
magnitude of the eigenvalues of $F_c$
are derived. These bounds turn out to be fairly tight.
The analysis is illustrated through several numerical experiments.

Some of the results of this paper are based on the recent
analysis of the differential operator $L_c$,
defined via \eqref{eq_intro_lc}, that appears in \cite{Report}, 
\cite{ReportArxiv}.
Nevertheless, the techniques used in this paper are quite
different from those of \cite{Report}, \cite{ReportArxiv}.
The implications of the recent analysis of both $L_c$ and $F_c$
to numerical algorithms involving
PSWFs are being currently investigated.

This paper is organized as follows. In Section~\ref{sec_prel},
we summarize a number of well known mathematical facts
to be used in the rest of this paper. In Section~\ref{sec_summary},
we provide a summary of the principal results
of this paper, and discuss several consequences of these results. 
In Section~\ref{sec_analytical}, we introduce
the necessary analytical apparatus and carry out the analysis.
In Section~\ref{sec_numerical}, we illustrate the analysis
via several numerical examples.

\section{Mathematical and Numerical Preliminaries}
\label{sec_prel}
In this section, we introduce notation and summarize
several facts to be used in the rest of the paper.

\subsection{Prolate Spheroidal Wave Functions}
\label{sec_pswf}

In this subsection, we summarize several facts about
the PSWFs. Unless stated otherwise, all of these facts can be 
found in \cite{RokhlinXiaoProlate}, 
\cite{RokhlinXiaoApprox},
\cite{LandauWidom},
\cite{ProlateSlepian1},
\cite{ProlateLandau1},
\cite{Report},
\cite{ReportArxiv}.

Given a real number $c > 0$, we define the operator
$F_c: L^2\left[-1, 1\right] \to L^2\left[-1, 1\right]$ via the formula
\begin{align}
F_c\left[\varphi\right] \left(x\right) = \int_{-1}^1 \varphi(t) e^{icxt} \; dt.
\label{eq_pswf_fc}
\end{align}
Obviously, $F_c$ is compact. We denote its eigenvalues by
$\lambda_0, \lambda_1, \dots, \lambda_n, \dots$ and assume that
they are ordered such that 
$\left|\lambda_n\right| \geq \left|\lambda_{n+1}\right|$
for all natural $n \geq 0$. We denote by $\psi_n$ the eigenfunction
corresponding to $\lambda_n$. In other words, the following
identity holds for all integer $n \geq 0$ and all real $-1 \leq x \leq 1$:
\begin{align}
\label{eq_prolate_integral}
\lambda_n \psi_n\left(x\right) = \int_{-1}^1 \psi_n(t) e^{icxt} \; dt.
\end{align}
We adopt the convention\footnote{
This convention agrees with that of \cite{RokhlinXiaoProlate},
\cite{RokhlinXiaoApprox} and differs from that of \cite{ProlateSlepian1}.
}
that $\| \psi_n \|_{L^2\left[-1,1\right]} = 1$.
The following theorem describes the eigenvalues and eigenfunctions
of $F_c$
(see
\cite{RokhlinXiaoProlate},
\cite{RokhlinXiaoApprox},
\cite{ProlateSlepian1}).
\begin{thm}
Suppose that $c>0$ is a real number, and that the operator $F_c$
is defined via \eqref{eq_pswf_fc} above. Then,
the eigenfunctions $\psi_0, \psi_1, \dots$ of $F_c$ are purely real,
are orthonormal and are complete in $L^2\left[-1, 1\right]$.
The even-numbered functions are even, the odd-numbered ones are odd.
Each function $\psi_n$ has exactly $n$ simple roots in $\left(-1, 1\right)$.
All eigenvalues $\lambda_n$ of $F_c$ are non-zero and simple;
the even-numbered ones are purely real and the odd-numbered ones
are purely imaginary; in particular, $\lambda_n = i^n \left|\lambda_n\right|$.
\label{thm_pswf_main}
\end{thm}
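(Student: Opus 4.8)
The plan is to exploit the deep connection between the integral operator $F_c$ and the differential operator $L_c$ from \eqref{eq_intro_lc}, which is the technical heart of the whole PSWF theory. The key observation is that $F_c$ and $L_c$ commute on the appropriate domain, so that a complete orthonormal system of eigenfunctions of the (formally self-adjoint) Sturm--Liouville operator $L_c$ can be chosen to simultaneously diagonalize $F_c$. Since $L_c$ is a regular Sturm--Liouville operator on $[-1,1]$ with the natural boundary behavior forced by the factor $(1-x^2)$, classical Sturm--Liouville theory guarantees that its eigenfunctions $\psi_n$ are real, form a complete orthonormal basis of $L^2[-1,1]$, and that the eigenfunction associated with the $n$-th eigenvalue (ordered by the size of the $L_c$-eigenvalue) has exactly $n$ simple zeros in the open interval $(-1,1)$ by the Sturm oscillation theorem.

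First I would establish the commutation $F_c L_c = L_c F_c$ by an explicit integration-by-parts argument, using that the kernel $e^{icxt}$ satisfies, as a function of $t$, the same differential equation that $L_c$ imposes in $x$ (this is the standard Slepian--Pollak--Landau computation), and checking that the boundary terms vanish because $(1-t^2)$ annihilates them at $t=\pm 1$. Once commutation holds, the simplicity of the spectrum of $L_c$ lets me conclude that each eigenfunction of $L_c$ is automatically an eigenfunction of $F_c$, so the $\psi_n$ inherit reality, orthonormality, completeness, and the oscillation (zero-counting) property directly from Sturm--Liouville theory, with the $L^2$-normalization $\|\psi_n\|=1$ imposed by convention. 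The parity claim follows from the symmetry $x \mapsto -x$ of both operators: since $L_c$ and $F_c$ commute with the reflection $R\varphi(x)=\varphi(-x)$ and the spectrum is simple, each $\psi_n$ is an eigenfunction of $R$, hence either even or odd; combining this with the $n$-zeros property forces $\psi_n$ to be even for even $n$ and odd for odd $n$.

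Next I would turn to the eigenvalues $\lambda_n$ of $F_c$ themselves. Non-vanishing is immediate: if $\lambda_n=0$ then $\int_{-1}^1 \psi_n(t) e^{icxt}\,dt \equiv 0$ for all $x$, whence the entire function $f(x)=\int_{-1}^1 \psi_n(t)e^{icxt}\,dt$ vanishes identically; but $f$ is (up to rescaling) the Fourier transform of the compactly supported $L^2$ function $\psi_n$, so $\psi_n \equiv 0$, contradicting $\|\psi_n\|=1$. For the phase structure $\lambda_n = i^n |\lambda_n|$, I would apply $F_c$ to $\psi_n$ and substitute $t \mapsto -t$: using $\psi_n(-t)=(-1)^n\psi_n(t)$ gives $F_c[\psi_n](x)=(-1)^n F_c[\psi_n](-x)=(-1)^n\lambda_n\psi_n(-x)=\lambda_n\psi_n(x)$ consistently, while comparing $F_c$ with the real Fourier-type operator shows that the even-indexed eigenvalues are real and the odd-indexed ones are purely imaginary; the precise factor $i^n$ then comes from pairing $F_c$ with the finite Fourier transform whose square is (a multiple of) the parity operator, so that the eigenvalues of $F_c$ lie on the axes $\{1,i,-1,-i\}\cdot\mathbb{R}_{>0}$ cyclically with $n$.

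The main obstacle I anticipate is not any single computation but rather the rigorous justification of the commutation relation together with the transfer of the simplicity and oscillation properties: one must be careful that $L_c$, despite being singular at the endpoints $x=\pm 1$ (where $1-x^2$ vanishes), still behaves like a regular Sturm--Liouville problem in the sense that its eigenfunctions are exactly the smooth (indeed polynomial-times-analytic) prolate functions, so that the classical oscillation theorem applies and the boundary terms in the integration by parts genuinely vanish. Pinning down the exact constant $i^n$ (as opposed to merely ``real or imaginary'') requires the additional identity relating $F_c$ to the Fourier transform and a sign/normalization bookkeeping that is easy to get wrong; I would handle this by evaluating both sides on the low-order functions $\psi_0,\psi_1$ explicitly to fix the convention and then propagate by parity.
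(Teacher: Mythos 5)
The paper does not actually prove this theorem: it is quoted as classical background, with the proof deferred to \cite{ProlateSlepian1}, \cite{RokhlinXiaoProlate}, \cite{RokhlinXiaoApprox}. Your strategy --- commutation of $F_c$ with the Sturm--Liouville operator $L_c$, transfer of reality, completeness, parity and the zero count from the (singular, Legendre-type) Sturm--Liouville problem, and the conjugation/reflection argument for the real-versus-imaginary dichotomy of the $\lambda_n$ --- is exactly the standard Slepian--Pollak route, and those parts of your sketch are sound, including the analytic-continuation argument for $\lambda_n\neq0$.

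However, two claims of the theorem are not reached by your argument. First, \emph{simplicity of the eigenvalues of $F_c$}: your commutation argument uses simplicity of the $L_c$-eigenvalues $\chi_n$ to show each $\psi_n$ is an $F_c$-eigenfunction, but it leaves entirely open whether $\lambda_n=\lambda_m$ for distinct $n,m$ in the same residue class, in which case the corresponding $F_c$-eigenspace would be two-dimensional. This is a genuinely nontrivial classical fact (equivalent to the $|\lambda_n|$ being pairwise distinct within each parity class) and needs its own argument --- e.g.\ the differential equation for $\lambda_n(c)$ in $c$ together with the $c\to 0$ behavior, or a total-positivity argument for the kernel; nothing in your sketch supplies it. Second, \emph{the sign in $\lambda_n=i^n|\lambda_n|$}. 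Your conjugation/parity computation only gives $\overline{\lambda_n}=(-1)^n\lambda_n$, i.e.\ $\lambda_n\in i^n\Rc$; it cannot distinguish $\lambda_2=-|\lambda_2|$ from $\lambda_2=+|\lambda_2|$, and ``propagating by parity'' from $n=0,1$ determines nothing for $n\ge 2$. Moreover the identity you invoke is false for the finite Fourier transform: a direct computation gives $F_c^2[\varphi](x)=(2\pi/c)\,Q_c[\varphi](-x)$, so $F_c^2$ is the reflection composed with $Q_c$, not a scalar multiple of the parity operator (that holds only for the Fourier transform on all of $\Rc$); it yields $\lambda_n^2=(-1)^n|\lambda_n|^2$, which is again only the dichotomy. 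The standard way to pin down the phase is to use that $\lambda_n(c)$ is continuous in $c$, never vanishes for $c>0$, and satisfies $\lambda_n(c)\sim i^n\cdot(\text{positive const})\cdot c^n$ as $c\to 0^{+}$ from the Legendre/spherical-Bessel expansion (cf.\ \eqref{eq_nu} and \eqref{eq_lambda_formula}), so the factor $i^n$ can never flip.
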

\noindent
We define the self-adjoint operator
$Q_c: L^2\left[-1, 1\right] \to L^2\left[-1, 1\right]$ via the formula
\begin{align}
Q_c\left[\varphi\right] \left(x\right) =
\frac{1}{\pi} \int_{-1}^1 
\frac{ \sin \left(c\left(x-t\right)\right) }{x-t} \; \varphi(t) \; dt.
\label{eq_pswf_qc}
\end{align}
Clearly, if we denote by $\F:L^2(\Rc) \to L^2(\Rc)$ 
the unitary Fourier transform,
then
\begin{align}
Q_c\left[\varphi\right] \left(x\right) = 
\chi_{\left[-1,1\right]}(x) \cdot
\F^{-1} \left[ 
  \chi_{\left[-c,c\right]}(\xi) \cdot 
\F\left[\varphi\right](\xi)
\right](x),
\end{align}
where $\chi_{\left[-a,a\right]} : \Rc \to \Rc$ is the characteristic
function of the interval $\left[-a,a\right]$, defined via the formula
\begin{align}
\chi_{\left[-a,a\right]}(x) = 
\begin{cases}
1 & -a \leq x \leq a, \\
0 & \text{otherwise},
\end{cases}
\label{eq_char_function}
\end{align}
for all real $x$.
In other words, $Q_c$ represents low-passing followed by time-limiting.
$Q_c$ relates to $F_c$, defined via \eqref{eq_pswf_fc}, by 
\begin{align}
Q_c = \frac{ c }{ 2 \pi } \cdot F_c^{\ast} \cdot F_c,
\label{eq_pswf_qc_fc}
\end{align}
and the eigenvalues $\mu_n$ of $Q_n$ satisfy the identity
\begin{align}
\mu_n = \frac{c}{2\pi} \cdot \left|\lambda_n\right|^2,
\label{eq_prolate_mu}
\end{align}
for all integer $n \geq 0$.
Moreover, $Q_c$ has the same eigenfunctions $\psi_n$ as $F_c$.
In other words,
\begin{align}
\mu_n \psi_n(x) = \frac{1}{\pi} 
      \int_{-1}^1 \frac{ \sin\left(c\left(x-t\right)\right) }
                       { x - t } \; \psi_n(t) \; dt,
\label{eq_prolate_integral2}
\end{align}
for all integer $n \geq 0$ and all $-1 \leq x \leq 1$.
Also,  $Q_c$ is closely related to the operator
$P_c: L^2(\Rc) \to L^2(\Rc)$,
defined via the formula
\begin{align}
P_c\left[\varphi\right] \left(x\right) =
\frac{1}{\pi} \int_{-\infty}^{\infty}
\frac{ \sin \left(c\left(x-t\right)\right) }{x-t} \; \varphi(t) \; dt,
\label{eq_pswf_pc}
\end{align}
which is a widely known orthogonal projection onto the space
of functions of band limit $c > 0$ on the real
line $\Rc$.

The following theorem about the eigenvalues $\mu_n$ of the operator $Q_c$,
defined via
\eqref{eq_pswf_qc},
can be traced back to \cite{LandauWidom}:
\begin{thm}
Suppose that $c>0$ and $0<\alpha<1$ are positive real numbers,
and that the operator $Q_c: L^2\left[-1,1\right] \to L^2\left[-1,1\right]$
is defined via \eqref{eq_pswf_qc} above.
Suppose also that the integer $N(c,\alpha)$ is the number of 
the eigenvalues $\mu_n$ of $Q_c$ that are greater than $\alpha$. In
other words,
\begin{align}
N(c,\alpha) = \max\left\{ k = 1,2,\dots \; : \; \mu_{k-1} > \alpha\right\}.
\end{align}
Then,
\begin{align}
N(c,\alpha)
= \frac{2c}{\pi} + \left( \frac{1}{\pi^2} \log \frac{1-\alpha}{\alpha} \right)
    \log c + O\left( \log c \right).
\label{eq_mu_spectrum}
\end{align}
\label{thm_mu_spectrum}
\end{thm}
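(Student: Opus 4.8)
The plan is to reduce the counting problem for $N(c,\alpha)$ to the asymptotic analysis of traces of functions of the self-adjoint operator $Q_c$, and to extract the two leading terms of these traces. Since $Q_c$ is compact and self-adjoint with eigenvalues $\mu_n\in(0,1)$, for any bounded $h:[0,1]\to\Rc$ we have $\mathrm{tr}\,h(Q_c)=\sum_{n\geq 0}h(\mu_n)$, and the choice $h=\chi_{(\alpha,1]}$ recovers $N(c,\alpha)$ exactly. The natural starting point is the exact identity
\[
\mathrm{tr}\,Q_c=\sum_{n\geq 0}\mu_n=\int_{-1}^1 \frac{c}{\pi}\,dx=\frac{2c}{\pi},
\]
obtained by evaluating the kernel of $Q_c$ on the diagonal (where $\sin(c(x-t))/(x-t)\to c$). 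This already isolates the leading term $2c/\pi$ and strongly suggests that the entire $\alpha$-dependence lives in the ``plunge region'' in which $\mu_n$ passes from near $1$ to near $0$.

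Next I would establish a two-term Szeg\H{o}-type expansion
\[
\sum_{n\geq 0}p(\mu_n)=\frac{2c}{\pi}\,p(1)+A(p)\,\log c+O(1)
\]
for every polynomial $p$ with $p(0)=0$ (the condition $p(0)=0$ is forced by convergence, since infinitely many $\mu_n$ accumulate at $0$), where $A(p)$ is an explicit linear functional of $p$. For $p(t)=t^k$ the left-hand side is $\mathrm{tr}\,Q_c^k$, which equals the $k$-fold integral over $[-1,1]^k$ of the product of sine kernels. The leading contribution of each such integral is again $2c/\pi$, while the $\log c$ term arises entirely from the logarithmic divergence of these integrals generated near the endpoints $x=\pm 1$. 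Carrying out this asymptotic analysis---equivalently, after the exponential change of variable that turns $Q_c$ into a Wiener--Hopf operator on a half-line, invoking the Szeg\H{o}--Kac theorem---yields $A(t^k)$ and hence $A(p)$ for all admissible $p$. A change of variables matching the sigmoidal profile $\mu\approx\bigl(1+e^{\pi^2(n-2c/\pi)/\log c}\bigr)^{-1}$ inside the plunge shows that the correct functional is
\[
A(p)=\frac{1}{\pi^2}\int_0^1 \frac{p(t)-p(1)\,\chi_{(1/2,1]}(t)}{t(1-t)}\,dt,
\]
which converges at $t=0$ precisely because $p(0)=0$ and at $t=1$ because $p(t)-p(1)=O(1-t)$.

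Evaluating this functional on the sharp cutoff gives the desired constant: taking $h=\chi_{(\alpha,1]}$ (so $h(0)=0$, $h(1)=1$) one obtains, for $\alpha<1/2$,
\[
A(h)=\frac{1}{\pi^2}\int_\alpha^{1/2}\frac{dt}{t(1-t)}=\frac{1}{\pi^2}\log\frac{1-\alpha}{\alpha},
\]
with the general case of $\alpha\in(0,1)$ identical up to sign bookkeeping, matching the stated $\log c$ coefficient.

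The main obstacle is that $\chi_{(\alpha,1]}$ is discontinuous, so the polynomial expansion cannot be applied to it directly; one must squeeze $N(c,\alpha)$ between $\sum_n p_\pm(\mu_n)$ for polynomials $p_-\leq\chi_{(\alpha,1]}\leq p_+$ and control the gap $\sum_n\bigl(p_+(\mu_n)-p_-(\mu_n)\bigr)$. This is the delicate step, and it requires an a-priori bound showing that the number of eigenvalues in any fixed window $(\alpha-\delta,\alpha+\delta)$ is $O(\log c)$, uniformly in $c$. Such a bound follows from the monotone ordering of the $\mu_n$ together with the concentration estimate
\[
\mathrm{tr}\bigl(Q_c-Q_c^2\bigr)=\sum_{n\geq 0}\mu_n(1-\mu_n)=O(\log c),
\]
whose summand is supported, up to exponentially small tails, on the plunge. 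Combining the polynomial trace asymptotics with this Tauberian squeeze yields the claimed expansion with error $O(\log c)$, completing the proof.
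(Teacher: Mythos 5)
The paper does not prove this statement at all: Theorem~\ref{thm_mu_spectrum} is quoted as a known result and attributed to Landau and Widom, so there is no internal proof to compare yours against. What you have written is, in outline, a reconstruction of the Landau--Widom strategy itself: exact evaluation of $\mathrm{tr}\,Q_c$ from the diagonal of the kernel, a two-term Szeg\H{o}-type expansion for $\mathrm{tr}\,p(Q_c)$ on polynomials vanishing at $0$, evaluation of the resulting linear functional on $\chi_{(\alpha,1]}$ (your form of $A(p)$ agrees with the standard one $\frac{1}{\pi^2}\int_0^1\frac{p(t)-tp(1)}{t(1-t)}\,dt$, since the two differ by $p(1)\int_0^1\frac{t-\chi_{(1/2,1]}(t)}{t(1-t)}\,dt=0$), and a Tauberian squeeze using the $O(\log c)$ bound on the number of eigenvalues in any window $(\alpha-\delta,\alpha+\delta)$, which does follow from $\mathrm{tr}(Q_c-Q_c^2)=O(\log c)$. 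All of that peripheral structure is sound.

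The genuine gap is that the central analytic fact --- the expansion
\begin{align*}
\mathrm{tr}\,Q_c^k=\frac{2c}{\pi}+A(t^k)\,\log c+O(1)
\end{align*}
with the \emph{explicit} coefficient $A(t^k)$, uniformly enough in $k$ to run the squeeze --- is asserted rather than proved. Saying that it follows ``after the exponential change of variable that turns $Q_c$ into a Wiener--Hopf operator on a half-line, invoking the Szeg\H{o}--Kac theorem'' names the right circle of ideas, but the reduction (localizing the $\log c$ divergence to the two endpoints $x=\pm1$, comparing with the model half-line operator, and controlling the interaction and remainder terms) is precisely the hard content of the Landau--Widom paper; nothing in your sketch supplies it. Moreover, your derivation of the functional $A(p)$ from the ``sigmoidal profile'' $\mu_n\approx\bigl(1+e^{\pi^2(n-2c/\pi)/\log c}\bigr)^{-1}$ is circular: that profile is equivalent to the theorem you are trying to prove, so it can serve only as a heuristic for guessing $A$, not as a step in its derivation. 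As submitted, the argument is a correct roadmap to the classical proof, not a proof.
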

\noindent
According to \eqref{eq_mu_spectrum}, there are about $2c/\pi$
eigenvalues whose absolute value is close to one, order of $\log c$
eigenvalues that decay exponentially, and the rest of them are
very close to zero. 

The eigenfunctions $\psi_n$ of $Q_c$ turn out to be the PSWFs, well
known from classical mathematical physics \cite{PhysicsMorse}.
The following theorem, proved in a more general form in
\cite{ProlateSlepian2},
formalizes this statement.
\begin{thm}
For any $c > 0$, there exists a strictly increasing unbounded sequence
of positive numbers $\chi_0 <  \chi_1 <  \dots$ such that, for
each integer $n \geq 0$, the differential equation
\begin{align}
\left(1 - x^2\right) \cdot \psi''(x) - 2 x \cdot \psi'(x) 
+ \left(\chi_n - c^2 x^2\right) \cdot \psi(x) = 0
\label{eq_prolate_ode}
\end{align}
has a solution that is continuous on $\left[-1, 1\right]$.
Moreover, all such solutions are constant multiples of 
the eigenfunction $\psi_n$ of $F_c$,
defined via \eqref{eq_pswf_fc} above.
\label{thm_prolate_ode}
\end{thm}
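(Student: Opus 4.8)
The plan is to realize \eqref{eq_prolate_ode} as the eigenvalue equation $L_c[\psi] = \chi\,\psi$ for the operator $L_c$ of \eqref{eq_intro_lc}, and then to show that $F_c$ and $L_c$ share a common eigenbasis because they commute. First I would treat $L_c$ as a singular Sturm--Liouville operator on $L^2[-1,1]$: the endpoints $x = \pm 1$ are singular, since the leading coefficient $1-x^2$ vanishes there, and the natural (limit-circle) boundary condition at each endpoint is that $\psi$ remain bounded, which is exactly the requirement that $\psi$ extend continuously to $[-1,1]$. Classical singular Sturm--Liouville theory then furnishes a self-adjoint realization of $L_c$ whose spectrum is a strictly increasing, unbounded sequence of simple eigenvalues $\chi_0 < \chi_1 < \dots$, with eigenfunctions forming a complete orthonormal basis of $L^2[-1,1]$ and the $n$-th eigenfunction possessing exactly $n$ interior zeros. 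This yields the sequence $\{\chi_n\}$ and the continuous solutions claimed in the theorem, while simplicity guarantees that each such solution is unique up to a constant multiple.

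The heart of the argument is the commutation $F_c L_c = L_c F_c$. Writing the kernel $K(x,t) = e^{icxt}$, a direct differentiation shows that applying $L_c$ in the variable $x$ yields the same function as applying it in the variable $t$,
\begin{align}
L_c^{(x)} K(x,t) = L_c^{(t)} K(x,t) = \brk{ c^2 x^2 + c^2 t^2 - c^2 x^2 t^2 + 2icxt } K(x,t).
\end{align}
Granting this, I would compute $L_c\bigl[F_c[\psi]\bigr](x)$ by differentiating under the integral sign, replace $L_c^{(x)} K$ by $L_c^{(t)} K$, and integrate by parts twice in $t$. Since the boundary terms carry the factor $1-t^2$, which vanishes at $t = \pm 1$, they drop out, and $L_c^{(t)}$ is transferred onto $\psi$, giving $L_c\bigl[F_c[\psi]\bigr] = F_c\bigl[L_c[\psi]\bigr]$.

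With commutativity established, the conclusion follows from elementary linear algebra: because each eigenvalue $\chi_n$ of $L_c$ is simple, its eigenspace is one-dimensional, and $F_c$ must map this eigenspace into itself; hence each eigenfunction of $L_c$ is automatically an eigenfunction of $F_c$. Matching the two families by their number of interior zeros (the $n$-th function in each has exactly $n$) identifies the $L_c$ eigenfunction for $\chi_n$ with $\psi_n$ of Theorem~\ref{thm_pswf_main}, so the continuous solutions of \eqref{eq_prolate_ode} are exactly the constant multiples of $\psi_n$.

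I expect the commutation computation to be clean once the symmetry $L_c^{(x)}K = L_c^{(t)}K$ is noticed, and the final linear-algebra step to be routine. The main obstacle is instead the rigorous singular Sturm--Liouville analysis at the two singular endpoints $x = \pm 1$: one must verify via the limit-point/limit-circle classification that ``boundedness'' selects a genuine self-adjoint extension of $L_c$, and then establish the discreteness, simplicity, and completeness of its spectrum. A fully self-contained treatment carries out this classification directly; alternatively, one may invoke the classical spectral theory of this particular equation, as in \cite{ProlateSlepian2}.
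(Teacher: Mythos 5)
This theorem is one the paper does not prove: it is stated among the preliminaries and explicitly attributed to \cite{ProlateSlepian2} (the commutation argument itself goes back to \cite{ProlateSlepian1}), so there is no in-paper proof to compare against. Your outline reproduces that classical argument, and it is essentially sound: the kernel identity $L_c^{(x)}e^{icxt}=L_c^{(t)}e^{icxt}$ is correct, the two integrations by parts do kill the boundary terms because of the factor $1-t^2$, and simplicity of the Sturm--Liouville spectrum together with completeness and the zero-count matching against Theorem~\ref{thm_pswf_main} legitimately identifies the two eigenfunction families.

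Two places deserve more care than your sketch gives them. First, the clause ``all such solutions are constant multiples of $\psi_n$'' requires showing that \emph{continuity} on $[-1,1]$ already pins down a one-dimensional solution space: each endpoint is a regular singular point of the ODE with both indicial roots equal to $0$, so the two Frobenius solutions there are one analytic and one carrying a $\log(1\mp x)$ singularity; continuity excludes the latter, and $\chi$ is an eigenvalue precisely when the solution selected at $+1$ coincides with the one selected at $-1$. Saying ``boundedness is the natural limit-circle boundary condition'' gestures at this but does not carry it out. Second, to conclude that an eigenfunction $\varphi$ of $L_c$ is an eigenfunction of $F_c$ you need $F_c[\varphi]\neq 0$; this follows from injectivity of $F_c$ (e.g.\ by Paley--Wiener: if $F_c[\varphi]=0$ then the entire extension of the Fourier transform of $\varphi\cdot\chi_{[-1,1]}$ vanishes on an interval and hence identically), or equivalently from the nonvanishing of the $\lambda_n$ combined with completeness in Theorem~\ref{thm_pswf_main}. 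With those two points filled in, your argument is a complete proof of the cited result.
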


In the following theorem, that appears in \cite{RokhlinXiaoApprox},
an upper bound on $\left|\lambda_n\right|$ in terms of $n$ and $c$
is described 
(the accuracy of this bound is
discussed in Section~\ref{sec_inac} below;
see also Theorem~\ref{thm_nu} and Remark~\ref{rem_nu}
 in Section~\ref{sec_weaker}).
\begin{thm}
Suppose that $c>0$ is a real number, and $n \geq 0$ is
a non-negative integer. Suppose also that $\lambda_n$
is the $n$th eigenvalue of the operator $F_c$, defined
via \eqref{eq_pswf_fc}. Suppose furthermore that the real
number $\nu(n,c)$ is defined via the formula
\begin{align}
\nu(n,c) = \frac{ \sqrt{\pi} \cdot c^n \left(n!\right)^2 }
                   { \left(2n\right)! \cdot \Gamma(n+3/2) },
\label{eq_nu}
\end{align}
where $\Gamma$ denotes the gamma function. Then,
\begin{align}
\left| \lambda_n \right| \leq \nu(n,c).
\label{eq_lambda_nu}
\end{align}
Moreover,
\begin{align}
\lambda_n(c) = i^n \nu(n,c) \cdot e^{R(n, c)}, 
\label{eq_lambda_formula}
\end{align}
where the real number $R(n,c)$ is defined via the formula
\begin{align}
R(n, c) = \int_0^c \left( 
   \frac{2 \left(\psi_n^{\tau}(1)\right)^2 -  1 }{ 2 \tau}
        - \frac{n}{\tau} \right) d\tau.
\label{eq_rnc}
\end{align}
The function $\psi_n^\tau$ in \eqref{eq_rnc} is the $n$th PSWF corresponding
to the band limit $\tau$.
\label{thm_rokhlin}
\end{thm}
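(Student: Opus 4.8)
The engine of the proof is to differentiate the defining relation \eqref{eq_prolate_integral} with respect to the band limit $c$. Writing $\psi_n^c$ and $\lambda_n=\lambda_n(c)$ for the PSWF and eigenvalue at band limit $c$, and denoting $\partial_c$ by a dot, I would differentiate $\lambda_n\psi_n^c(x)=\int_{-1}^1\psi_n^c(t)e^{icxt}\,dt$ in $c$ and then pair the result with $\psi_n^c$ in $L^2[-1,1]$. Two simplifications do all the work. First, the normalization $\|\psi_n^c\|_2=1$ forces $\langle\dot\psi_n^c,\psi_n^c\rangle=0$, and this kills both contributions involving $\dot\psi_n^c$: the direct one trivially, and the one coming through the kernel because swapping the order of integration and invoking the symmetry of $e^{icxt}$ together with \eqref{eq_prolate_integral} turns it into $\lambda_n\langle\dot\psi_n^c,\psi_n^c\rangle=0$. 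Second, the $x$-derivative of \eqref{eq_prolate_integral} gives $\int_{-1}^1 t\,\psi_n^c(t)e^{icxt}\,dt=\tfrac{\lambda_n}{ic}(\psi_n^c)'(x)$, which converts the sole surviving term into $\tfrac{\lambda_n}{c}\int_{-1}^1 x\,\psi_n^c(\psi_n^c)'\,dx$. An integration by parts, using the definite parity of $\psi_n^c$ and $\|\psi_n^c\|_2=1$, evaluates this last integral as $\psi_n^c(1)^2-\tfrac12$. Since $\lambda_n=i^n|\lambda_n|$ with the factor $i^n$ constant in $c$, the outcome is the clean logarithmic-derivative identity
\begin{align}
\frac{d}{dc}\log\lambda_n(c)=\frac{2\,\psi_n^c(1)^2-1}{2c}.
\label{eq_plan_logder}
\end{align}

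Given \eqref{eq_plan_logder}, the exact representation \eqref{eq_lambda_formula} follows by integration. Comparing with \eqref{eq_rnc} and using $\tfrac{d}{dc}\log\nu(n,c)=n/c$ from \eqref{eq_nu}, one checks that $\log\lambda_n(c)-\log\nu(n,c)-R(n,c)$ has vanishing $c$-derivative, so it remains only to fix the constant of integration by matching as $c\to0^+$. Here I would use $\psi_n^c\to\bar P_n$, the $L^2$-normalized Legendre polynomial, and expand $e^{icxt}$ in \eqref{eq_prolate_integral} to lowest nonvanishing order: orthogonality of $\bar P_n$ to polynomials of degree $<n$ annihilates all terms below $k=n$, and equating the coefficients of $x^n$ yields $\lambda_n(c)\sim i^n c^n\cdot(\text{const})$. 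Evaluating that constant reduces to the classical moment $\int_{-1}^1 t^nP_n(t)\,dt=2^{n+1}(n!)^2/(2n+1)!$ together with $\Gamma(n+3/2)=\sqrt\pi\,(2n+1)!/(2^{2n+1}n!)$, and the two combine to give $\lim_{c\to0}\lambda_n(c)/c^n=i^n\sqrt\pi(n!)^2/\big((2n)!\,\Gamma(n+3/2)\big)$, precisely the coefficient of $c^n$ in $i^n\nu(n,c)$. As $\tau\to0$ the integrand in \eqref{eq_rnc} tends to $0$ because $\psi_n^\tau(1)^2\to\bar P_n(1)^2=n+\tfrac12$, so $R(n,c)\to0$ and the matching is exact, establishing \eqref{eq_lambda_formula}.

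The bound \eqref{eq_lambda_nu} is then immediate from \eqref{eq_lambda_formula}, since $|\lambda_n|=\nu(n,c)\,e^{R(n,c)}$ with $\nu(n,c)>0$; the inequality holds precisely when $R(n,c)\le0$. Rewriting \eqref{eq_rnc} as $R(n,c)=\int_0^c\big(2\,\psi_n^\tau(1)^2-(2n+1)\big)/(2\tau)\,d\tau$, nonpositivity of $R$ is equivalent to the pointwise endpoint estimate
\begin{align}
\psi_n^\tau(1)^2\le n+\tfrac12\qquad\text{for all }\tau>0.
\label{eq_plan_endpoint}
\end{align}
I expect \eqref{eq_plan_endpoint} to be the main obstacle. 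It is a uniform-in-$\tau$ bound on the value of the PSWF at the boundary, sharp at $\tau=0$ where $\psi_n^0(1)^2=\bar P_n(1)^2=n+\tfrac12$, so it is really the assertion that $\tau\mapsto\psi_n^\tau(1)^2$ never exceeds its value at $\tau=0$. Proving it demands genuine control of $\psi_n^\tau$ near the endpoint, which I would pursue through the Sturm--Liouville equation \eqref{eq_prolate_ode} and the accompanying analysis of $L_c$ (the behavior of $\psi_n$ and the eigenvalue $\chi_n$ at $x=1$); this is exactly the type of input furnished by the companion study of the differential operator, and it is where the substantive work resides.
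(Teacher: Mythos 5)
First, a point of comparison: the paper contains no proof of Theorem~\ref{thm_rokhlin} at all --- it is quoted from \cite{RokhlinXiaoApprox} --- so there is no internal argument to measure yours against. Judged on its own terms, your derivation of the exact representation \eqref{eq_lambda_formula} is the classical one and is correct: differentiating \eqref{eq_prolate_integral} in $c$, pairing with $\psi_n$, killing both $\partial_c\psi_n$ contributions via the normalization, and integrating $\int_{-1}^1 x\psi_n\psi_n'\,dx$ by parts does yield $\tfrac{d}{dc}\log\lambda_n=(2\psi_n^c(1)^2-1)/(2c)$; and your small-$c$ matching (orthogonality of $\overline{P_n}$ to lower-degree polynomials, the moment $\int_{-1}^1 t^nP_n(t)\,dt=2^{n+1}(n!)^2/(2n+1)!$, and the value of $\Gamma(n+3/2)$) correctly recovers the constant $i^n\sqrt{\pi}(n!)^2/\bigl((2n)!\,\Gamma(n+3/2)\bigr)$, which matches $i^n\nu(n,c)/c^n$. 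To make this airtight you would still need to justify differentiability of $\lambda_n$ and $\psi_n^c$ in $c$ (perturbation theory for the simple eigenvalues of $F_c$), the limit $\psi_n^c\to\overline{P_n}$ as $c\to0^+$, and the convergence of the integral in \eqref{eq_rnc} at $\tau=0$; these are standard but not free.

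The genuine gap is in the inequality \eqref{eq_lambda_nu}. You correctly reduce it to the endpoint bound $\psi_n^\tau(1)^2\le n+\tfrac12$ for all $\tau>0$ (which, note, is sufficient for $R(n,c)\le 0$ but not equivalent to it, contrary to your phrasing), and then you stop: you call it ``the main obstacle'' and ``where the substantive work resides'' without proving it. But that bound \emph{is} the theorem --- everything else in your write-up is exact bookkeeping that converts \eqref{eq_lambda_nu} into this single pointwise statement. Nothing in the present paper supplies it (the results imported from \cite{Report}, \cite{ReportArxiv} control $\chi_n$ and $\psi_n(0)$, not the boundary value $\psi_n(1)$), and it does not follow from soft considerations: it is a uniform-in-$\tau$ estimate, sharp in the limit $\tau\to0$, and in \cite{RokhlinXiaoApprox} it is the content of a separate lemma with its own proof. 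As it stands, your proposal establishes \eqref{eq_lambda_formula} modulo routine regularity, but establishes \eqref{eq_lambda_nu} only conditionally on an unproven estimate.
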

\noindent
The following approximation formula for $|\lambda_n|$ appears
in Theorem 18 of \cite{RokhlinXiaoApprox}, without proof
(though the authors do illustrate its accuracy
via several numerical examples).
\begin{thm}
Suppose that $c \geq 1$ is a real number, and that $n \geq c$ is
a positive integer. Suppose also that the real number 
$p_0(n,c)$ is defined via the formula
\begin{align}
p_0(n,c) =
\sqrt{\frac{2\pi}{c}} \cdot
\exp\left[
-\sqrt{\chi_n} \cdot \left(
F\left( \sqrt{\frac{\chi_n-c^2}{\chi_n}} \right) -
E\left( \sqrt{\frac{\chi_n-c^2}{\chi_n}} \right)
\right)
\right],
\label{eq_p0}
\end{align}
where $F,E$ are the complete elliptic integrals,
defined, respectively, via \eqref{eq_F}, \eqref{eq_E}
in Section~\ref{sec_elliptic}.
Then,
\begin{align}
\left| \frac{|\lambda_n|}{p_0(n,c)} - 1 \right| 
= O\left( \frac{1}{\sqrt{cn}} \right).
\label{eq_lambda_approx}
\end{align}
\label{thm_approx}
\end{thm}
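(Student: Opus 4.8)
The plan is to collapse the statement onto the asymptotics of a single scalar, the endpoint value $\psi_n^c(1)$, and then to recover $p_0(n,c)$ by integrating in the band limit $c$. The starting point is the exact representation of Theorem~\ref{thm_rokhlin}: since $|\lambda_n(c)| = \nu(n,c)\,e^{R(n,c)}$ and $\frac{d}{dc}\log\nu(n,c) = n/c$ by \eqref{eq_nu}, differentiating \eqref{eq_rnc} with the fundamental theorem of calculus gives the clean identity
\begin{align}
\frac{d}{dc}\log|\lambda_n(c)| = \frac{2\left(\psi_n^c(1)\right)^2 - 1}{2c}. \notag
\end{align}
Thus the whole problem reduces to understanding $\left(\psi_n^c(1)\right)^2$ as a function of $c$, together with the initial condition $R(n,c)\to 0$ as $c\to 0^+$ (the integrand in \eqref{eq_rnc} is regular at $0$, since $\psi_n^0$ is the normalized Legendre polynomial and $(\psi_n^0(1))^2 = n+\tfrac12$).

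Next I would produce $(\psi_n^c(1))^2$ by a Liouville--Green (WKB) analysis of \eqref{eq_prolate_ode}. The hypothesis $n\ge c$ forces $\chi_n > c^2$, hence $\chi_n - c^2x^2 > 0$ throughout $[-1,1]$: there are no interior turning points, $\psi_n$ is globally oscillatory, and to leading order
\begin{align}
\psi_n(x) \approx \frac{C}{\left[(1-x^2)(\chi_n - c^2x^2)\right]^{1/4}}\cos\left(\int_0^x\sqrt{\frac{\chi_n - c^2t^2}{1-t^2}}\,dt + \phi\right). \notag
\end{align}
The $L^2[-1,1]$ normalization fixes the amplitude: averaging $\cos^2\to\tfrac12$ and using $\int_{-1}^1[(1-x^2)(\chi_n-c^2x^2)]^{-1/2}\,dx = 2F(c/\sqrt{\chi_n})/\sqrt{\chi_n}$ yields $C^2 = \sqrt{\chi_n}/F(c/\sqrt{\chi_n})$, where $F$ is the complete elliptic integral of the first kind. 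Because this amplitude degenerates at the regular singular points $x=\pm1$, I would match the outer solution to the inner solution near $x=1$, where \eqref{eq_prolate_ode} reduces to Bessel's equation of order zero and $\psi_n(x)\approx\psi_n(1)\,J_0\!\left(\sqrt{2(\chi_n-c^2)(1-x)}\right)$. Comparing the large-argument envelope of $J_0$ with the outer amplitude gives the connection $\psi_n(1)^2 = \tfrac{\pi}{2}C^2$, that is,
\begin{align}
\left(\psi_n^c(1)\right)^2 \approx \frac{\pi\sqrt{\chi_n}}{2\,F\!\left(c/\sqrt{\chi_n}\right)}. \notag
\end{align}

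The final step is to check that this reproduces $\log p_0(n,c)$. I would pair the endpoint formula with the Bohr--Sommerfeld quantization of the interior phase, $\sqrt{\chi_n}\,E(c/\sqrt{\chi_n}) \approx \tfrac{\pi}{2}(n+\gamma)$ with $\gamma = O(1)$ determined by the two endpoint (Maslov) corrections; differentiating this in $c$ expresses $d\chi_n/dc$ through $F,E$ at the modulus $k'=c/\sqrt{\chi_n}$. Writing $k = \sqrt{(\chi_n-c^2)/\chi_n}$ (the modulus in $p_0$, complementary to $k'$), a direct differentiation of $\sqrt{\chi_n}\,(F(k)-E(k))$ in $c$, with the $d\chi_n/dc$ term substituted, collapses by Legendre's relation $E(k)F(k')+E(k')F(k)-F(k)F(k')=\tfrac{\pi}{2}$ to exactly $\frac{d}{dc}\log p_0(n,c) = \frac{(\psi_n^c(1))^2}{c}-\frac{1}{2c}$, matching the identity of the first paragraph to leading order. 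Integrating in $c$ and fixing the constant by Stirling's asymptotics of $\nu(n,c)$ (which supplies the prefactor $\sqrt{2\pi/c}$ and reproduces the small-$c$ match $n\log c$) then yields \eqref{eq_lambda_approx}.

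The hard part will be making the two WKB inputs rigorous and uniform in the regime $n\ge c\ge 1$, with errors small enough to deliver the stated $O(1/\sqrt{cn})$. The delicate point is the endpoint matching: $x=\pm1$ are regular singular points at which the Liouville--Green amplitude blows up, so the Bessel connection must be justified with a uniform error bound, and it is precisely the width of the transition layer near $x=1$ (where the effective large parameter is of order $\sqrt{cn}$ rather than $n$) that governs the error term. One must further control the accumulation of these errors under the $c$-integration and verify that the $O(1)$ constant $\gamma$ in the quantization, together with the subleading amplitude corrections, contribute nothing beyond $O(1/\sqrt{cn})$.
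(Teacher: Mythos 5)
There is a mismatch of a different kind here: the paper does not prove this statement at all. Theorem~\ref{thm_approx} is quoted verbatim from Theorem 18 of \cite{RokhlinXiaoApprox}, which the paper explicitly says states it \emph{without proof}, and Remark~\ref{rem_approx} stresses that \eqref{eq_lambda_approx} ``cannot be used in rigorous analysis, due to the lack of both error estimates and proof.'' The formula is used in this paper only to discuss, informally, the tightness of the rigorous bounds $\zeta$, $\eta$, $\xi$. So there is no paper proof to compare against, and the honest assessment is whether your proposal closes that acknowledged gap. It does not.

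Your outline is the natural one and its algebraic skeleton checks out: the identity $\frac{d}{dc}\log|\lambda_n(c)|=\bigl(2(\psi_n^c(1))^2-1\bigr)/(2c)$ follows exactly from \eqref{eq_nu}--\eqref{eq_rnc} in Theorem~\ref{thm_rokhlin}; the small-$\tau$ regularity via $(\psi_n^0(1))^2=n+\tfrac12$ is right; the WKB amplitude, the $J_0$ endpoint matching giving $(\psi_n^c(1))^2\approx\pi\sqrt{\chi_n}/\bigl(2F(c/\sqrt{\chi_n})\bigr)$, and the collapse via Legendre's relation are all the standard heuristic derivation of the Widom-type exponent. But every step carrying actual analytic content is deferred: you assert, rather than prove, (i) a uniform error bound for the Liouville--Green approximation with two regular singular endpoints, valid down to small band limit $\tau$; (ii) control of the $O(1)$ Maslov constant and the subleading amplitude corrections in the quantization condition; and (iii) integrability of all these errors against $d\tau/\tau$ over $(0,c]$ with a total of size $O(1/\sqrt{cn})$. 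Point (iii) is particularly delicate, since the error in $(\psi_n^\tau(1))^2$ must vanish as $\tau\to0$ fast enough for the logarithmic weight not to destroy the bound, and the transition-layer width near $x=\pm1$ changes character as $\tau$ ranges over the whole interval. The claimed rate $O(1/\sqrt{cn})$ is precisely the quantity that would have to be extracted from these unproved estimates, so as written the proposal is a plausible derivation of the leading-order formula, not a proof of \eqref{eq_lambda_approx}.
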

\begin{remark}
Obviously, \eqref{eq_lambda_approx} cannot be used
in rigorous analysis, due to the lack of both error
estimates and proof. In addition, the assumption $n \geq c$ turns
out to be rather restrictive. Nevertheless, in Section~\ref{sec_analytical}
we establish several upper bounds on $|\lambda_n|$, whose 
form is similar to that of $p_0(n,c)$. The approximate
formula \eqref{eq_lambda_approx} will only be used
in the discussion of the accuracy of these bounds,
in Section~\ref{sec_inac}.
\label{rem_approx}
\end{remark}

\noindent
The following four theorems contain relatively recent results.
All of them appear in \cite{Report}, \cite{ReportArxiv}.

Many properties of the PSWF $\psi_n$ depend on
whether the eigenvalue $\chi_n$ of the ODE \eqref{eq_prolate_ode}
is greater than or less than $c^2$. 
In the following theorem from \cite{Report}, \cite{ReportArxiv}, we describe
a simple relationship 
between $c, n$ and $\chi_n$.
\begin{thm}
Suppose that $n \geq 2$ is a non-negative integer.
\begin{itemize}
\item If $n \leq (2c/\pi)-1$, then $\chi_n < c^2$.
\item If $n \geq (2c/\pi)$, then $\chi_n > c^2$.
\item If $(2c/\pi)-1 < n < (2c/\pi)$, then either inequality is possible.
\end{itemize}
\label{thm_n_and_khi}
\end{thm}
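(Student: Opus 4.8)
The plan is to recast the statement as a zero-counting problem for the prolate differential equation \eqref{eq_prolate_ode} and to attack it with the Sturm comparison theorem. First I would eliminate the first-order term by the substitution $\phi(x)=\sqrt{1-x^2}\,\psi_n(x)$, which turns \eqref{eq_prolate_ode} into the normal form
\begin{align}
\phi''(x)+Q(x)\,\phi(x)=0,\qquad Q(x)=\frac{\chi_n-c^2x^2}{1-x^2}+\frac{1}{(1-x^2)^2}.
\end{align}
Because $\sqrt{1-x^2}$ has no zeros in $(-1,1)$, the function $\phi$ has there exactly the same zeros as $\psi_n$, and by Theorem~\ref{thm_pswf_main} their number is exactly $n$. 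Thus the first two bullets both reduce to comparing $n$ against $2c/\pi$ through the oscillation of this normal form. The threshold $\chi_n=c^2$ enters naturally: it is precisely the value at which the sign-changing part $\chi_n-c^2x^2$ becomes a constant multiple of $1-x^2$, so that the regular piece $(\chi_n-c^2x^2)/(1-x^2)$ degenerates to the constant $c^2$.

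For the first bullet I would argue by contraposition. Assuming $\chi_n\ge c^2$, a one-line estimate gives $Q(x)>c^2$ for every $x\in(-1,1)$; comparing with the equation $y''+c^2y=0$, whose solution $\sin\!\big(c(x+1)\big)$ has of order $2c/\pi$ zeros in $(-1,1)$, and using that $\phi$ itself vanishes at $x=\pm1$, Sturm's theorem shows that the faster-oscillating $\phi$ has at least $\lfloor 2c/\pi\rfloor$ zeros. Hence $n\ge\lfloor 2c/\pi\rfloor>(2c/\pi)-1$, which is exactly the contrapositive of the first bullet. This direction is clean because $Q$ exceeds the constant $c^2$ everywhere, so no turning points appear and the sharp constant $2c/\pi$ is produced with no loss.

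The second bullet is where the real work lies, and again I would use contraposition, assuming $\chi_n\le c^2$ and proving $n<2c/\pi$. Now $\chi_n-c^2x^2$ vanishes at $x_0=\sqrt{\chi_n}/c\le 1$, so $(-1,1)$ splits into an oscillatory core $[-x_0,x_0]$ and two forbidden collars $x_0<|x|<1$. On each collar $\chi_n-c^2x^2\le 0$, hence $Q(x)<(1-x^2)^{-2}$; since $\sqrt{1-x^2}$ is an explicit zero-free solution of $y''+(1-x^2)^{-2}y=0$ on $(-1,1)$, Sturm comparison shows that $\phi$ has at most one zero in each collar. It then remains to bound the number of zeros in the core by the phase integral $\tfrac1\pi\int_{-x_0}^{x_0}\sqrt{(\chi_n-c^2x^2)/(1-x^2)}\,dx$ of the regular part of $Q$, and a direct computation shows that this integral equals $2c/\pi$ exactly when $\chi_n=c^2$ and is strictly increasing in $\chi_n$ --- precisely the bound the theorem requires.

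The hard part will be making this last step rigorous with the correct constant. The obstacle is that the full normal-form potential $Q$ overshoots $c^2$ near the turning points $\pm x_0$, where $(\chi_n-c^2x^2)/(1-x^2)\to 0$ while $(1-x^2)^{-2}$ is large; consequently a naive comparison of $\phi$ against a constant-coefficient oscillator, or the crude estimate that the number of zeros is at most $1+\tfrac1\pi\int\sqrt{Q}$, picks up a spurious contribution of size $\int(1-x^2)^{-1}\,dx\sim\log c$ and yields only $n\le 2c/\pi+O(\log c)$, losing exactly the sharp constant asserted by the theorem. Controlling this demands a genuine turning-point (Langer--Airy) analysis showing that the $(1-x^2)^{-2}$ term is a lower-order correction that does not affect the leading zero count, so that the phase integral of the regular part governs $n$ with an error below $1$; for this step I would invoke the refined analysis of the operator $L_c$ developed in \cite{Report}, \cite{ReportArxiv}. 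Finally, the third bullet follows by observing that throughout the unit-length window $(2c/\pi)-1<n<2c/\pi$ the core and collar contributions are in direct competition, and since $\chi_n$ depends continuously on $c$ while the crossing of the level $c^2$ must occur somewhere inside this window, one can select parameters realizing either inequality.
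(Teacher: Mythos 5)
First, a point of reference: the paper does not prove Theorem~\ref{thm_n_and_khi} at all --- it is imported verbatim from \cite{Report}, \cite{ReportArxiv} as a known result --- so there is no in-paper proof to compare against; your proposal has to stand on its own. The setup is sound: the Liouville normal form with $\phi=\sqrt{1-x^2}\,\psi_n$ and $Q(x)=(\chi_n-c^2x^2)/(1-x^2)+(1-x^2)^{-2}$ is computed correctly, the zero counts of $\phi$ and $\psi_n$ agree, and your argument for the first bullet is complete: $\chi_n\ge c^2$ does give $Q>c^2$ on $(-1,1)$, Sturm comparison against $\sin(c(x+1))$ gives $n\ge\lfloor 2c/\pi\rfloor>(2c/\pi)-1$, which is the contrapositive. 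The collar estimate for the second bullet (at most one zero where $\chi_n-c^2x^2\le 0$, via the explicit zero-free solution $\sqrt{1-x^2}$ of $y''+(1-x^2)^{-2}y=0$) is also correct.

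The second bullet, however, contains a genuine gap, and it is the heart of the theorem. You state yourself that the naive comparison yields only $n\le 2c/\pi+O(\log c)$ and that closing the gap ``demands a genuine turning-point (Langer--Airy) analysis,'' which you then propose to import from \cite{Report}, \cite{ReportArxiv} --- but those are precisely the references in which this theorem is proved, so the argument is circular at its only hard step. Worse, even the intermediate target you name would not suffice: bounding the core zero count by the phase integral $\tfrac1\pi\int_{-x_0}^{x_0}\sqrt{(\chi_n-c^2x^2)/(1-x^2)}\,dx\le 2c/\pi$ and adding up to one zero per collar gives only $n\le 2c/\pi+2$, not the required $n<2c/\pi$. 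What is actually needed is the strict inequality $n<\tfrac1\pi\int_{-x_0}^{x_0}\sqrt{(\chi_n-c^2x^2)/(1-x^2)}\,dx$ (the analogue, in the regime $\chi_n\le c^2$, of the lower bound in Theorem~\ref{thm_n_khi_simple}), and establishing that strict inequality with no additive loss is the real content of the proof; it is absent here. The third bullet, while its continuity-plus-intermediate-value outline is reasonable, rests on the second bullet and so inherits the gap.
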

\noindent
In the following theorem from \cite{Report}, \cite{ReportArxiv},
we describe upper and lower bounds on $\chi_n$ in terms
of $n$ and $c$.
\begin{thm}
Suppose that $n \geq 2$ is a positive integer, and that $\chi_n > c^2$. Then,
\begin{align}
n < & \; 
\frac{2}{\pi} \int_0^1 \sqrt{ \frac{\chi_n - c^2 t^2}{1 - t^2} } \; dt
= \nonumber \\
& \; \frac{2}{\pi} \sqrt{\chi_n} \cdot E \left( \frac{c}{\sqrt{\chi_n}} \right)
< n+3,
\label{eq_both_large_simple_prop}
\end{align}
where the function $E:\left[0,1\right] \to \Rc$ is defined via
\eqref{eq_E} in Section~\ref{sec_elliptic}.
\label{thm_n_khi_simple}
\end{thm}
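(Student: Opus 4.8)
The plan is to treat the displayed equality and the two inequalities separately, since they have different characters: the equality is a change of variables, while the inequalities encode Sturm oscillation theory for the prolate ODE \eqref{eq_prolate_ode}.

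First I would dispatch the equality. Factoring $\sqrt{\chi_n}$ out of the numerator and substituting $t=\sin\phi$ turns $\int_0^1\sqrt{(\chi_n-c^2t^2)/(1-t^2)}\,dt$ into $\sqrt{\chi_n}\int_0^{\pi/2}\sqrt{1-k^2\sin^2\phi}\,d\phi$ with modulus $k=c/\sqrt{\chi_n}$, which is exactly the definition \eqref{eq_E} of the complete elliptic integral $E$. The hypothesis $\chi_n>c^2$ guarantees $k<1$, so $E(k)$ is a genuine complete elliptic integral and the integrand is real with only an integrable singularity at $t=1$. This step is routine.

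For the inequalities the idea is that the integrand is the local wavenumber of the oscillating solution, and the $n$ interior zeros pin down the accumulated phase. I would write \eqref{eq_prolate_ode} in self-adjoint form $\left((1-x^2)\psi_n'\right)'+(\chi_n-c^2x^2)\psi_n=0$, set $p=1-x^2$ and $q=\chi_n-c^2x^2$ (here $q>0$ throughout $[-1,1]$ precisely because $\chi_n>c^2$), and apply the modified Pr\"ufer transformation $\psi_n=\rho\sin\theta$, $p\psi_n'=\rho\sqrt{pq}\,\cos\theta$. A direct computation then yields the phase equation $\theta'=\sqrt{q/p}+\tfrac14\,(\log pq)'\sin2\theta$, whose leading term integrates to $\int_{-1}^1\sqrt{q/p}\,dx=2\int_0^1\sqrt{(\chi_n-c^2t^2)/(1-t^2)}\,dt$, i.e. $\pi$ times the quantity to be bounded. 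Next I would fix the total phase increment. Since $\sin2\theta=0$ whenever $\theta\in\pi\mathbb{Z}$, the phase equation gives $\theta'=\sqrt{q/p}>0$ at every zero of $\psi_n$, so $\theta$ crosses each multiple of $\pi$ exactly once and upward; by Theorem~\ref{thm_pswf_main} there are exactly $n$ such crossings in $(-1,1)$. At the endpoints $\psi_n$ is analytic and nonvanishing while $p\psi_n'\to0$, forcing $\theta\to\pi/2\pmod\pi$. Together these give $\theta(1)-\theta(-1)=n\pi$ exactly, hence the key relation
\[
2\int_0^1\sqrt{\frac{\chi_n-c^2t^2}{1-t^2}}\,dt \;=\; n\pi-\mathcal{E},\qquad \mathcal{E}:=\frac14\int_{-1}^1(\log pq)'\,\sin2\theta\,dx.
\]

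The whole content of the two inequalities thus reduces to the single two-sided estimate $-3\pi<\mathcal{E}<0$, and this is the step I expect to be the main obstacle. The difficulty is that $(\log pq)'=\frac{-2x}{1-x^2}+\frac{-2c^2x}{\chi_n-c^2x^2}$ blows up at $x=\pm1$, so $\mathcal{E}$ is only \emph{conditionally} controlled, through the cancellation coming from the oscillation of $\sin2\theta$. My plan is to integrate by parts, verifying first that the boundary contributions vanish because $\sin2\theta\to0$ at the endpoints faster than $\log pq\to-\infty$, and then to bound the remaining integral by exploiting that $\theta'\to\infty$ near $\pm1$ (so $\theta$ oscillates rapidly there) together with the monotone, sign-definite behavior of $(\log pq)'$ on each half-interval $(0,1)$ and $(-1,0)$. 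The constant $3$ is deliberately loose, chosen to absorb these endpoint estimates with room to spare rather than to be sharp.
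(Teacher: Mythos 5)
First, a point of reference: the paper does not prove Theorem~\ref{thm_n_khi_simple} at all; it is imported without proof from \cite{Report}, \cite{ReportArxiv}, so there is no in-paper argument to compare yours against. Judged on its own terms, your proposal is correct up to a point. The equality is indeed just the substitution $t=\sin\phi$ together with the definition \eqref{eq_E}, and your Pr\"ufer reduction is sound: with $p=1-x^2$ and $q=\chi_n-c^2x^2>0$ on $[-1,1]$, the phase equation $\theta'=\sqrt{q/p}+\tfrac14(\log pq)'\sin2\theta$ is correct, the endpoint analysis ($\theta\to\pi/2 \bmod \pi$ because $\psi_n(\pm1)\neq0$ while $\sqrt{p/q}\,\psi_n'/\psi_n\to0$) is correct, and together with the $n$ transversal upward crossings of $\pi\mathbb{Z}$ guaranteed by Theorem~\ref{thm_pswf_main} it yields the exact identity $2\int_0^1\sqrt{q/p}\,dt=n\pi-\mathcal{E}$.

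The gap is that everything after that identity is a plan rather than a proof, and that is precisely where the entire content of the theorem lives. Both inequalities are equivalent to $-3\pi<\mathcal{E}<0$, and neither half is established. The sign $\mathcal{E}<0$ does not follow from the observation that $(\log pq)'$ is sign-definite on each half-interval, because $\sin2\theta$ changes sign on every half-period of the phase, so the integrand $(\log pq)'\sin2\theta$ has no fixed sign anywhere; the claim is a statement about cancellation that must be quantified --- for instance by pairing consecutive half-periods of $\theta$ and exploiting the monotonicity of $(\log pq)'$, or by a Sturm comparison after a Liouville transformation, which is essentially how zero-counting bounds of this type are obtained in \cite{ReportArxiv}. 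The lower bound $\mathcal{E}>-3\pi$ is likewise asserted ``with room to spare'' but no estimate is offered; the integration by parts you propose leaves $-\tfrac12\int_{-1}^{1}\log(pq)\cos(2\theta)\,\theta'\,dx$, whose magnitude is not obviously $O(1)$, since $\log(pq)$ is unbounded at $\pm1$ and $\theta'$ blows up like $(1\mp x)^{-1/2}$ there, so the endpoint contributions require an actual computation rather than an appeal to rapid oscillation. Until the two-sided bound on $\mathcal{E}$ is carried out, the proof is incomplete.
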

\noindent
In the following theorem, 
we provide another upper bound on $\chi_n$ in terms of $n$.
\begin{thm}
Suppose that $n \geq 2$ is a positive integer, and that $\chi_n > c^2$. Then,
\begin{align}
\chi_n < \left( \frac{\pi}{2} \left(n+1\right) \right)^2.
\label{eq_khi_n_square_prop}
\end{align}
\label{thm_khi_n_square}
\end{thm}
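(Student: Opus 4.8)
The plan is to treat $\chi_n = \chi_n(c)$ as a function of the band limit $c$ and to reduce the inequality to the single ``critical'' value of $c$ at which $\chi_n(c)$ meets $c^2$. The key observation is a pair of monotonicity properties. Regarding $\psi_n$ as the $L^2[-1,1]$-normalized, simple eigenfunction of the self-adjoint operator $L_c[\psi] = -\frac{d}{dx}((1-x^2)\psi') + c^2 x^2 \psi$ whose eigenvalues are exactly the $\chi_n$ of \eqref{eq_prolate_ode}, first-order perturbation theory (the Hellmann--Feynman formula, legitimate since $c^2 x^2$ is a bounded perturbation and $\chi_n$ is simple) gives
\begin{align}
\chi_n'(c) = 2c \int_{-1}^1 x^2\, \psi_n(x)^2 \, dx .
\nonumber
\end{align}
Hence $\chi_n'(c)>0$ for $c>0$, and, because $x^2 \le 1$ on $[-1,1]$,
\begin{align}
\frac{d}{dc}\left( \chi_n(c) - c^2 \right)
= 2c \int_{-1}^1 (x^2-1)\, \psi_n(x)^2 \, dx < 0
\qquad (c>0).
\nonumber
\end{align}
Thus $\chi_n(c)$ is strictly increasing, while $\chi_n(c)-c^2$ is strictly decreasing. (Both facts also follow from the Courant--Fischer min-max characterization applied to the quadratic forms of $L_c$ and of $L_c - c^2$, which avoids any differentiability discussion.)

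Next I would locate the critical value. At $c=0$ the operator $L_0$ is the Legendre operator, so $\chi_n(0)=n(n+1)>0$, whereas for fixed $n$ and $c$ large Theorem~\ref{thm_n_and_khi} forces $\chi_n(c)<c^2$. Since $\chi_n(c)-c^2$ is continuous and strictly decreasing, there is a unique $c^\ast = c^\ast(n)>0$ with $\chi_n(c^\ast)=(c^\ast)^2$, and $\chi_n(c)>c^2$ holds precisely when $c<c^\ast$. In particular the hypothesis $\chi_n>c^2$ of the theorem is equivalent to $c<c^\ast$.

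Now I would assemble the bound. By the strict monotonicity of $\chi_n$, for every $c<c^\ast$ we have $\chi_n(c)<\chi_n(c^\ast)=(c^\ast)^2$, so it remains only to bound $c^\ast$. This is exactly where Theorem~\ref{thm_n_and_khi} re-enters, in contrapositive form: the conclusion $\chi_n(c^\ast)<(c^\ast)^2$ of its first alternative is violated (we have equality), so the hypothesis $n \le (2c^\ast/\pi)-1$ of that alternative must fail, giving $n > (2c^\ast/\pi)-1$, i.e. $c^\ast < \frac{\pi}{2}(n+1)$. Combining the two estimates yields $\chi_n(c) < (c^\ast)^2 < \left(\frac{\pi}{2}(n+1)\right)^2$, which is precisely \eqref{eq_khi_n_square_prop}.

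The hard part will be the justification of the perturbation computation, namely that $\chi_n(c)$ really is a smooth (indeed real-analytic) simple eigenvalue branch so that Hellmann--Feynman applies, together with the endpoint regularity of the singular Sturm--Liouville operator $L_c$ on $[-1,1]$. I expect this to be the only genuine obstacle; it can be dispatched either by the analytic perturbation theory for the bounded perturbation $c^2 x^2$ of the Legendre operator, or, more economically, by replacing the two derivative computations with their min-max counterparts, for which only the quadratic forms are needed and no differentiability is invoked. The remaining steps are elementary.
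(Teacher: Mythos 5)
Your argument is correct, but note that this paper does not actually prove Theorem~\ref{thm_khi_n_square}: it is one of the four results imported without proof from \cite{Report}, \cite{ReportArxiv}, so there is no in-paper proof to match. Judged on its own, your route is sound. The Hellmann--Feynman step is legitimate here because $c\mapsto L_c$ is an analytic family of self-adjoint operators differing from the Legendre operator by the bounded multiplication operator $c^2x^2$, and the simplicity of every $\chi_n(c)$ (Theorem~\ref{thm_prolate_ode}) prevents branch crossings; your min-max fallback is in fact all you need, since non-strict monotonicity of both $\chi_n(c)$ and $\chi_n(c)-c^2$ already carries the argument (with $c^*=\sup\{c:\chi_n(c)>c^2\}$ and $\le$ in place of $<$ where needed). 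The contrapositive use of the first bullet of Theorem~\ref{thm_n_and_khi} at $c^*$ to get $c^*<\tfrac{\pi}{2}(n+1)$ is exactly right. One simplification worth recording: you can dispense with $c^*$ and the intermediate value theorem entirely. The hypothesis $\chi_n(c)>c^2$ together with the contrapositive of the first bullet of Theorem~\ref{thm_n_and_khi} gives $c<\bar c:=\tfrac{\pi}{2}(n+1)$ directly; monotonicity of $\chi_n$ in the band limit gives $\chi_n(c)\le\chi_n(\bar c)$; and since $n=2\bar c/\pi-1$, the same bullet applied at band limit $\bar c$ gives $\chi_n(\bar c)<\bar c^{\,2}$, which is \eqref{eq_khi_n_square_prop}. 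This version needs only the min-max monotonicity and nothing about derivatives or zeros of $\chi_n(c)-c^2$.
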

\noindent
In the following theorem, we describe an upper bound on
the reciprocal of $\left|\psi_n(0)\right|$ for even $n$
(see Theorem 21 in \cite{Report}).
\begin{thm}
Suppose that $n > 0$ is an even integer,
and that $\chi_n > c^2$. Then,
\begin{align}
\frac{1}{| \psi_n(0) |} \leq 4 \cdot \sqrt{n \cdot \frac{\chi_n}{c^2}}.
\label{eq_prop_psi0}
\end{align}
\label{thm_psi0}
\end{thm}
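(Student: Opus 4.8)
The plan is to control $\psi_n$ on $[0,1]$ by a monotone ``envelope'' built from the prolate ODE \eqref{eq_prolate_ode}, evaluate it at the origin using that $\psi_n$ is even, and then convert the resulting pointwise bound into a statement about the $L^2$ mass via the normalization $\|\psi_n\|_{L^2[-1,1]}=1$. First I would write \eqref{eq_prolate_ode} in Sturm--Liouville form $\left((1-x^2)\psi_n'\right)' + (\chi_n - c^2x^2)\psi_n = 0$ and introduce
\[ E(x) = \psi_n(x)^2 + \frac{(1-x^2)\,\psi_n'(x)^2}{\chi_n - c^2 x^2}. \]
A direct computation using the ODE gives $E'(x) = \frac{2x\,(\chi_n + c^2 - 2c^2x^2)}{(\chi_n - c^2x^2)^2}\,\psi_n'(x)^2$, which is nonnegative on $[0,1]$ because $\chi_n>c^2$ forces $\chi_n + c^2 - 2c^2x^2 \ge \chi_n - c^2 > 0$ there. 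Equivalently, the function $D(x) = \frac{(1-x^2)(\chi_n - c^2x^2)}{\chi_n}\,E(x)$ satisfies $D'(x)\le 0$, i.e. is nonincreasing on $[0,1]$. Since $n$ is even we have $\psi_n'(0)=0$, so $D(0)=E(0)=\psi_n(0)^2$. Monotonicity of $D$ together with $\psi_n^2\le E$ then yields, for every $x\in[0,1]$, the pointwise inequality $(1-x^2)(\chi_n - c^2x^2)\,\psi_n(x)^2 \le \chi_n D(x)\le \chi_n D(0) = \chi_n\,\psi_n(0)^2$.

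Next I would integrate this pointwise bound over $[0,1]$ and use $\int_0^1\psi_n^2 = \tfrac12$. The key algebraic simplification is the elementary inequality $\chi_n - c^2x^2 \ge c^2(1-x^2)$, valid precisely because $\chi_n>c^2$ (their difference is the constant $\chi_n - c^2$). Combining these gives
\[ c^2\int_0^1 (1-x^2)^2\,\psi_n(x)^2\,dx \le \int_0^1 (1-x^2)(\chi_n - c^2x^2)\,\psi_n(x)^2\,dx \le \chi_n\,\psi_n(0)^2, \]
so that $\psi_n(0)^2 \ge \frac{c^2}{\chi_n}\int_0^1 (1-x^2)^2\,\psi_n^2\,dx$. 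Thus \eqref{eq_prop_psi0} reduces to the single weighted-mass lower bound $\int_0^1 (1-x^2)^2\,\psi_n^2\,dx \ge \frac{1}{16\,n}$: granting it, $\psi_n(0)^2 \ge \frac{c^2}{16\,n\,\chi_n}$, which is \eqref{eq_prop_psi0} after taking square roots.

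It remains to establish $\int_0^1 (1-x^2)^2\psi_n^2 \ge \frac1{16n}$, and this is where I expect the main difficulty. The total mass $\int_0^1\psi_n^2=\tfrac12$ is fixed, but the weight $(1-x^2)^2$ suppresses whatever mass sits in the boundary layer near the singular endpoint $x=1$, where $\psi_n$ oscillates with a shrinking local wavelength and its envelope $E$ is largest. The plan is to fix $b$ with $1-b^2 = \frac1{2\sqrt n}$ and to show that at most half of the mass lies in $(b,1)$, i.e. $\int_b^1\psi_n^2 \le \tfrac14$; then $\int_0^b\psi_n^2\ge\tfrac14$, and $(1-x^2)^2\ge(1-b^2)^2 = \frac1{4n}$ on $[0,b]$ gives $\int_0^1(1-x^2)^2\psi_n^2\ge\frac1{4n}\cdot\frac14=\frac1{16n}$.

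\textbf{Main obstacle.} Bounding the boundary-layer mass $\int_b^1\psi_n^2$ is the crux: the pointwise envelope bound above is too lossy there (integrating it against $dx$ diverges logarithmically as $x\to1$), so it cannot by itself limit the mass near the endpoint. I would instead analyze \eqref{eq_prolate_ode} directly in the layer --- substituting $x=1-s$ turns it, to leading order, into a Bessel-type equation whose solution regular at the endpoint stays bounded --- and combine this with the explicit estimates $n < \frac2\pi\sqrt{\chi_n}\,E(c/\sqrt{\chi_n})$ from Theorem~\ref{thm_n_khi_simple} and $\chi_n < \left(\frac\pi2(n+1)\right)^2$ from Theorem~\ref{thm_khi_n_square} to locate the layer (its width is controlled by $\chi_n-c^2$) and to estimate the mass it contains. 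The generous factor $n\chi_n/c^2$ in \eqref{eq_prop_psi0} leaves ample room for crude constants in this endpoint estimate, so I would not try to optimize them.
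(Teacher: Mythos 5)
First, a point of comparison: this paper does not actually prove Theorem~\ref{thm_psi0}; it is imported verbatim from Theorem 21 of \cite{Report}, so there is no in-paper proof to measure your argument against. Judged on its own terms, your argument is incomplete. The envelope computation in your first paragraph is correct --- writing $p=1-x^2$, $q=\chi_n-c^2x^2$, one indeed gets $E'=-(pq)'(\psi_n')^2/q^2\ge 0$ and $D'=(pq)'\psi_n^2/\chi_n\le 0$ on $[0,1]$ --- and it yields the valid pointwise bound $(1-x^2)(\chi_n-c^2x^2)\psi_n(x)^2\le\chi_n\psi_n(0)^2$ together with the correct reduction of \eqref{eq_prop_psi0} to the single inequality $\int_0^1(1-x^2)^2\psi_n^2\,dx\ge 1/(16n)$. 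But that inequality, which you yourself label the crux, is never proved: you only outline a plan (a Bessel-type boundary-layer analysis near $x=1$, combined with Theorems~\ref{thm_n_khi_simple} and \ref{thm_khi_n_square}), and none of the estimates needed to execute it are supplied or available elsewhere in the paper. A proof whose essential step is announced as the ``main obstacle'' and left as a sketch is not a proof.

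Moreover, the specific intermediate claim you propose, namely $\int_b^1\psi_n^2\le\tfrac14$ with $1-b^2=1/(2\sqrt n)$, is false at the low end of the admissible range. Take $n=2$ and $c$ small, so that $\chi_2>c^2$ holds and $\psi_2$ is close to $\overline{P_2}$: with $b^2=1-1/(2\sqrt2)\approx 0.6464$ one computes
\begin{align*}
\int_b^1\overline{P_2}(x)^2\,dx=\frac58\left[\frac95x^5-2x^3+x\right]_b^1\approx 0.269>\frac14.
\end{align*}
(The target quantity $\int_0^1(1-x^2)^2\overline{P_2}^2\,dx\approx 0.19$ still exceeds $1/32$ by a wide margin, so it is your route, not the theorem, that breaks.) So even granting a rigorous treatment of the boundary layer, the splitting would have to be re-tuned, and the endpoint-mass estimate --- the only genuinely difficult part of this theorem --- remains entirely open in your write-up.
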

\begin{remark}
Detailed numerical experiments, conducted by the author,
seem to indicate that, in fact,
\begin{align}
\frac{1}{| \psi_n(0) |} = O(1)
\label{eq_psi0_approx}
\end{align}
(see also \cite{RokhlinXiaoApprox}).
In other words, the inequality \eqref{eq_prop_psi0} is rather crude;
on the other hands, it has been rigorously proved, and
is sufficient for our purposes.
\label{rem_psi0}
\end{remark}

\subsection{Legendre Polynomials and PSWFs}
\label{sec_legendre}
In this subsection, we list several well known
facts about Legendre polynomials and the relationship between
Legendre polynomials and PSWFs.
All of these facts can be found, for example, in 
\cite{Ryzhik},
\cite{RokhlinXiaoProlate}
\cite{Abramovitz}.

The Legendre polynomials $P_0, P_1, P_2, \dots$ are defined via
the formulae
\begin{align}
& P_0(t) = 1, \nonumber \\
& P_1(t) = t,
\label{eq_legendre_pol_0_1}
\end{align}
and the recurrence relation
\begin{align}
\left(k+1\right) P_{k+1}(t) = \left(2k+1\right) t P_k(t) - k P_{k-1}(t),
\label{eq_legendre_pol_rec}
\end{align}
for all $k = 1, 2, \dots$.
The Legendre polynomials $\left\{P_k\right\}_{k=0}^{\infty}$ constitute a 
complete orthogonal system in $L^2\left[-1, 1\right]$. The normalized
Legendre polynomials are defined via the formula
\begin{align}
\overline{P_k}(t) = P_k(t) \cdot \sqrt{k + 1/2}, 
\label{eq_legendre_normalized}
\end{align}
for all $k=0,1,2,\dots$. The $L^2\left[-1,1\right]$-norm of each
normalized Legendre polynomial equals to one, i.e.
\begin{align}
\int_{-1}^1 \left( \overline{P_k}(t) \right)^2 \; dt = 1.
\label{eq_legendre_normalized_norm}
\end{align}
Therefore, the normalized Legendre polynomials constitute
an orthonormal basis for $L^2\left[-1, 1\right]$. 
In particular, for every real $c>0$
and every integer $n \geq 0$, the prolate spheroidal
wave function $\psi_n$, corresponding
to the band limit $c$, can be expanded into the series
\begin{align}
\psi_n(x) = \sum_{k = 0}^{\infty} \beta_k^{(n,c)} \cdot \overline{P_k}(x),
\label{eq_num_leg_exp}
\end{align}
for all $-1 \leq x \leq 1$,
where $\beta_0^{(n,c)}, \beta_1^{(n,c)}, \dots$ are defined via
the formula
\begin{align}
\beta_k^{(n,c)} = \int_{-1}^1 \psi_n(x) \cdot \overline{P_k}(x) \; dx,
\end{align}
for all $k=0, 1, 2, \dots$.
The sequence $\beta_0^{(n,c)}, \beta_1^{(n,c)}, \dots$ satisfies
the recurrence relation
\begin{align}
A_{0,0} \cdot \beta_0^{(n,c)} + 
A_{0,2} \cdot \beta_2^{(n,c)} & = \chi_n \cdot \beta_0^{(n,c)}, 
\nonumber \\
A_{1,1} \cdot \beta_1^{(n,c)} + 
A_{1,3} \cdot \beta_3^{(n,c)} & = \chi_n \cdot \beta_1^{(n,c)}, 
\nonumber \\
A_{k,k-2} \cdot \beta_{k-2}^{(n,c)} + 
A_{k,k} \cdot \beta_k^{(n,c)} +
A_{k,k+2} \cdot \beta_{k+2}^{(n,c)} & = \chi_n \cdot \beta_k^{(n,c)}, 
\end{align}
for all $k=2,3,\dots$, where $A_{k,k}$, $A_{k+2,k}$, $A_{k,k+2}$ are
defined via the formulae
\begin{align}
& A_{k, k} = k(k+1) + \frac{ 2k(k+1) - 1 }{ (2k+3)(2k-1) } \cdot c^2, 
  \nonumber \\
& A_{k, k+2} = A_{k+2, k} =
\frac{ (k+2)(k+1) }{ (2k+3) \sqrt{(2k+1)(2k+5)} } \cdot c^2,
\label{eq_num_a_matrix}
\end{align}
for all $k=0,1,2,\dots$.
In other words,
the infinite vector $\beta = \left\{\beta_k^{(n,c)}\right\}_{k = 0}^{\infty}$
satisfies the identity
\begin{align}
\left(A - \chi_n I\right) \cdot \beta = 0,
\label{eq_num_beta}
\end{align}
where the non-zero entries of the infinite symmetric matrix $A$ are given via
\eqref{eq_num_a_matrix}. 

\subsection{Elliptic Integrals}
\label{sec_elliptic}
In this subsection, we summarize several facts about
elliptic integrals. These facts can be found, for example,
in section 8.1 in \cite{Ryzhik}, and in \cite{Abramovitz}.

The incomplete elliptic integrals of the first and second kind
are defined, respectively, by the formulae
\begin{align}
\label{eq_F_y}
& F(y, k) =  \int_0^y \frac{dt}{\sqrt{1 - k^2 \sin^2 t}}, \\
& E(y, k) = \int_0^y \sqrt{1 - k^2 \sin^2 t} \; dt,
\label{eq_E_y}
\end{align}
where $0 \leq y \leq \pi/2$ and $0 \leq k \leq 1$.
By performing the substitution $x = \sin t$, we can write 
\eqref{eq_F_y} and \eqref{eq_E_y} as
\begin{align}
& F(y, k) = \int_0^{\sin(y)}
  \frac{ dx }{ \sqrt{\left(1 - x^2\right) \left(1 - k^2 x^2\right) } },
\label{eq_F_y_2} \\
\nonumber \\
& E(y, k) = \int_0^{\sin(y)}
\sqrt{ \frac{1 - k^2 x^2}{1 - x^2} } \; dx.
\label{eq_E_y_2}
\end{align}
The complete elliptic integrals of the first and second kind are
defined, respectively, by the formulae
\begin{align}
\label{eq_F}
& F(k) = F\left(\frac{\pi}{2}, k\right) = 
\int_0^{\pi/2} \frac{dt}{\sqrt{1 - k^2 \sin^2 t}}, \\
& E(k) = E\left(\frac{\pi}{2}, k\right) =
\int_0^{\pi/2} \sqrt{1 - k^2 \sin^2 t} \; dt,
\label{eq_E}
\end{align}
for all $0 \leq k \leq 1$. Moreover,
\begin{align}
E\left( \sqrt{1-k^2} \right) =
1 + \left(-\frac{1}{4}+\log(2)-\frac{\log(k)}{2}\right) \cdot k^2 +
    O\left( k^4 \cdot \log(k) \right).
\label{eq_E_exp}
\end{align}
In addition,
\begin{align}
F(k)-E(k) > \frac{\pi}{4} \cdot k^2,
\label{eq_F_minus_E}
\end{align}
for all real $0 < k < 1$.

\section{Summary and Discussion}
\label{sec_summary}
In this section, we summarize some of the properties of 
prolate spheroidal wave functions and the associated eigenvalues,
proved in Section~\ref{sec_analytical}. 
In particular, we present several upper bounds on $|\lambda_n|$
and discuss their accuracy.
The PSWFs and related notions were introduced in Section~\ref{sec_pswf}.
Throughout this section, the band limit $c>0$ is assumed
to be a positive real number.

\subsection{Summary of Analysis}
\label{sec_sum_analysis}
In the following two propositions, we provide some upper bounds
on the eigenvalues $\chi_n$ of the ODE \eqref{eq_prolate_ode}.
They are proved in Theorem~\ref{thm_exp_term}, 
\ref{thm_big_h}, \ref{thm_khi_n_upper}
in Section~\ref{sec_weaker}.
\begin{prop}
Suppose that $n$ is a positive integer, and that
\begin{align}
n > \frac{2c}{\pi} + \frac{2}{\pi^2} \cdot \delta
   \cdot \log \left( \frac{4e\pi c}{\delta} \right),
\end{align}
for some
\begin{align}
0 < \delta < \frac{5\pi}{4} \cdot c.
\end{align}
Then,
\begin{align}
\chi_n > c^2 + \frac{4}{\pi} \cdot \delta \cdot c.
\end{align}
\label{prop_khi_1}
\end{prop}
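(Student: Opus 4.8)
The plan is to convert the hypothesis on $n$ into the desired lower bound on $\chi_n$ by exploiting the monotone relationship between $n$ and $\chi_n$ furnished by Theorem~\ref{thm_n_khi_simple}, thereby reducing the whole statement to a single upper bound for a complete elliptic integral. First I would check that we are in the regime $\chi_n > c^2$. Since $0 < \delta < \frac{5\pi}{4}c < 4e\pi c$, the logarithm $\log(4e\pi c/\delta)$ is positive, so the assumption forces $n > 2c/\pi$; Theorem~\ref{thm_n_and_khi} then yields $\chi_n > c^2$ (the cited theorems require $n\ge 2$, which we henceforth assume). With $\chi_n > c^2$ secured, Theorem~\ref{thm_n_khi_simple} applies and gives $n < W(\chi_n)$, where I set
\begin{align}
W(\chi) = \frac{2}{\pi}\sqrt{\chi}\cdot E\!\left(\frac{c}{\sqrt{\chi}}\right) = \frac{2}{\pi}\int_0^1 \sqrt{\frac{\chi - c^2 t^2}{1-t^2}}\,dt,
\end{align}
the second form coming from \eqref{eq_both_large_simple_prop}. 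The integrand is strictly increasing in $\chi$ for each fixed $t$, so $W$ is strictly increasing on $(c^2,\infty)$.

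Set $\chi^{*} = c^2 + \frac{4}{\pi}\delta c$ and $\alpha = \chi^{*} - c^2 = \frac{4}{\pi}\delta c$. The entire proposition then reduces to the single inequality
\begin{align}
W(\chi^{*}) \le \frac{2c}{\pi} + \frac{2}{\pi^2}\,\delta\,\log\!\left(\frac{4e\pi c}{\delta}\right).
\label{eq_plan_key}
\end{align}
Indeed, once \eqref{eq_plan_key} is established, the hypothesis yields $W(\chi^{*}) \le \textrm{(RHS)} < n < W(\chi_n)$, and the strict monotonicity of $W$ forces $\chi^{*} < \chi_n$, which is exactly the claim. Thus no further spectral input is needed: the problem is now purely one of bounding an elliptic integral from above.

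To prove \eqref{eq_plan_key} I would expand $W(\chi^{*})$ about the modulus $a = c/\sqrt{\chi^{*}}$, whose complementary modulus is $a' = \sqrt{1-a^2} = \sqrt{\alpha/(c^2+\alpha)}$. Feeding $k = a'$ into \eqref{eq_E_exp} (which may be rewritten as $E(a) = 1 + (\tfrac12\log(4/a') - \tfrac14)\,a'^2 + O(a'^4\log a')$) and combining it with $\sqrt{\chi^{*}} = c + \frac{\alpha}{2c} + O(\alpha^2/c^3)$, one finds that the logarithmic piece $\tfrac12\log(4/a')$ produces exactly $\frac{2\delta}{\pi^2}\log(\pi c/\delta)$, while all remaining $O(\delta)$ contributions collect into $\frac{2\delta}{\pi^2}(1 + 2\log 2) = \frac{2\delta}{\pi^2}\log(4e)$. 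These two pieces sum to precisely the right-hand side of \eqref{eq_plan_key}; in other words, the constant $4e$ inside the logarithm is tuned so that the leading-order value of $W(\chi^{*})$ matches the claimed bound on the nose.

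The main obstacle is making this matching rigorous over the whole admissible range $0 < \delta < \frac{5\pi}{4}c$. The difficulty is that $a'^2 = \frac{4\delta/(\pi c)}{1 + 4\delta/(\pi c)}$ need not be small — it may approach $5/6$ — so the error $O(a'^4\log a')$ in \eqref{eq_E_exp} is not uniformly negligible and cannot be simply discarded. I would therefore replace the asymptotic identity by an exact upper bound on $W(\chi^{*}) - \tfrac{2c}{\pi}$, working from the representation $W(\chi^{*}) - \tfrac{2c}{\pi} = \frac{2}{\pi}\int_0^{\pi/2}\left(\sqrt{c^2\sin^2\phi + \alpha} - c\sin\phi\right)d\phi$ obtained via the substitution $t = \cos\phi$. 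The integrand here equals $\alpha/(\sqrt{c^2\sin^2\phi+\alpha}+c\sin\phi)$, which is monotone and elementary to bound, and splitting the $\phi$-range at $\sin\phi = \sqrt{\alpha}/(2c)$ isolates the logarithmic contribution while keeping the constants controlled; the moderate-to-large $\delta$ subrange can be handled separately, where the generous upper limit $\frac{5\pi}{4}c$ leaves ample slack. Carrying out this estimate carefully is the technical heart of the argument, and is precisely the content supplied by the forward-referenced Theorems~\ref{thm_exp_term}, \ref{thm_big_h} and \ref{thm_khi_n_upper}.
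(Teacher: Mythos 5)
Your argument is essentially the paper's own: the reduction via the strictly increasing map $\chi \mapsto \frac{2}{\pi}\sqrt{\chi}\,E\bigl(c/\sqrt{\chi}\bigr)$ is exactly what Theorem~\ref{thm_exp_term} encodes through the inverse function $H$, and your key inequality bounding $W(\chi^{*})$ by $\frac{2c}{\pi}+\frac{2}{\pi^2}\delta\log(4e\pi c/\delta)$ is precisely the lower bound $s \leq H\bigl(\frac{s}{4}\log\frac{16e}{s}\bigr)$ of Theorem~\ref{thm_big_h} under the substitution $s = 4\delta/(\pi c)$. The paper declares that estimate ``straightforward, elementary'' and omits its proof (validating it only numerically), so your sketch of how to actually bound the elliptic integral --- including the correct observation that the expansion \eqref{eq_E_exp} alone does not suffice when $s$ is not small --- is, if anything, more explicit than what the paper supplies, though it too stops short of a complete verification.
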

\begin{prop}
Suppose that $n$ is a positive integer, and that
\begin{align}
\frac{2c}{\pi} \leq 
n \leq \frac{2c}{\pi} + \frac{2}{\pi^2} \cdot \delta
   \cdot \log \left( \frac{4e\pi c}{\delta} \right) - 3,
\label{eq_n_greater}
\end{align}
for some
\begin{align}
3 < \delta < \frac{5\pi}{4} \cdot c.
\label{eq_delta_khi_2}
\end{align}
Then,
\begin{align}
\chi_n < c^2 + \frac{8 \cdot \delta}{\pi} \cdot c.
\end{align}
\label{prop_khi_2}
\end{prop}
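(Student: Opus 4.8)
The plan is to bound $\chi_n$ from above by a monotone comparison: Theorem~\ref{thm_n_khi_simple} controls the quantity $\frac{2}{\pi}\sqrt{\chi_n}\,E(c/\sqrt{\chi_n})$ from above in terms of $n$, and I would compare this against the value of the same quantity at the candidate threshold $\chi^{\ast}=c^2+\frac{8\delta c}{\pi}$, using that the quantity is increasing in $\chi$.

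First, since $n\geq 2c/\pi$, Theorem~\ref{thm_n_and_khi} gives $\chi_n>c^2$; moreover, whenever the admissible range for $n$ is nonempty the constraints on $\delta$ force $n\geq 2$, so the cited theorems apply. Theorem~\ref{thm_n_khi_simple} then yields
\begin{align}
g(\chi_n) := \frac{2}{\pi}\int_0^1\sqrt{\frac{\chi_n-c^2 t^2}{1-t^2}}\;dt
= \frac{2}{\pi}\sqrt{\chi_n}\,E\!\left(\frac{c}{\sqrt{\chi_n}}\right) < n+3 .
\end{align}
The integrand defining $g(\chi)$ increases strictly with $\chi$ at every $t\in[0,1)$, so $g$ is strictly increasing on $[c^2,\infty)$. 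Hence $\chi_n<\chi^{\ast}$ will follow once I show $g(\chi^{\ast})\geq n+3$, and, inserting the hypothesis $n+3\leq \frac{2c}{\pi}+\frac{2}{\pi^2}\delta\log(4e\pi c/\delta)$, the whole statement reduces to proving
\begin{align}
g(\chi^{\ast}) \geq \frac{2c}{\pi}+\frac{2}{\pi^2}\,\delta\,\log\!\left(\frac{4e\pi c}{\delta}\right).
\end{align}

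To analyze $g(\chi^{\ast})$ I would exploit the algebraic simplification available at $\chi^{\ast}$. Writing $a=\frac{8\delta c}{\pi}$ and substituting $t=\sin\theta$,
\begin{align}
\frac{\pi}{2}\,g(\chi^{\ast}) = \int_0^1\sqrt{c^2+\frac{a}{1-t^2}}\;dt
= \int_0^{\pi/2}\sqrt{c^2\cos^2\theta+a}\;d\theta ,
\end{align}
so, using $\int_0^{\pi/2}c\cos\theta\,d\theta=c$, the target inequality becomes
\begin{align}
\int_0^{\pi/2}\Big(\sqrt{c^2\cos^2\theta+a}-c\cos\theta\Big)\,d\theta
\;\geq\; \frac{\delta}{\pi}\,\log\!\left(\frac{4e\pi c}{\delta}\right).
\end{align}
Rationalizing the integrand and using the elementary bound $\sqrt{c^2\cos^2\theta+a}\leq c\cos\theta+\sqrt{a}$ gives
\begin{align}
\sqrt{c^2\cos^2\theta+a}-c\cos\theta
= \frac{a}{\sqrt{c^2\cos^2\theta+a}+c\cos\theta}
\;\geq\; \frac{a}{2c\cos\theta+\sqrt{a}} .
\end{align}
The remaining integral $\int_0^{\pi/2}(2c\cos\theta+\sqrt{a})^{-1}d\theta$ is elementary (a tangent half-angle substitution produces an $\mathrm{artanh}$ or an $\arctan$ according as $a<4c^2$ or $a>4c^2$), and after multiplying by $a$ and substituting $a=8\delta c/\pi$ the claim collapses to the single-variable inequality $4\int_0^{\pi/2}(\cos\theta+\tfrac12\sqrt{\epsilon}\,)^{-1}d\theta\geq \log(32e/\epsilon)$ in the variable $\epsilon=8\delta/(\pi c)$, to be verified for $0<\epsilon<10$ (the range encoded by $\delta<\frac{5\pi}{4}c$).

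The main obstacle is precisely this last step, because the logarithmic constant $4e\pi c/\delta$ must be tracked exactly. Cruder pointwise estimates — for instance replacing $2c\cos\theta+\sqrt{a}$ by $2\sqrt{c^2\cos^2\theta+a}$ — lose a multiplicative factor inside the logarithm and are not sharp enough near the upper end of the $\delta$-range; it is the particular bound $\sqrt{c^2\cos^2\theta+a}\leq c\cos\theta+\sqrt{a}$ followed by \emph{exact} integration that retains enough. Verifying the resulting one-variable inequality (e.g.\ by showing the difference of its two sides is monotone in $\epsilon$ and strictly positive at $\epsilon=10$, where the margin is small but does not vanish) is elementary calculus, but requires the closed form of the integral together with some care; this is exactly the place where the hypotheses $3<\delta<\frac{5\pi}{4}c$ furnish the room needed.
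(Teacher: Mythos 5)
Your argument is correct and follows essentially the same route as the paper: both rest on the two-sided bound $n < \frac{2}{\pi}\sqrt{\chi_n}\,E(c/\sqrt{\chi_n}) < n+3$ of Theorem~\ref{thm_n_khi_simple} together with the monotonicity of that quantity in $\chi_n$, thereby reducing the claim to a single elementary inequality in the variable $\delta/c$ (the paper packages this as the chain $H\left(\frac{s}{4}\log\frac{16e}{s}\right) \leq s + \frac{s^2}{5} \leq 2s$ via Theorems~\ref{thm_exp_term}, \ref{thm_big_h} and \ref{thm_khi_n_upper}). Since the paper itself omits the proof of that final elementary estimate (Theorem~\ref{thm_big_h} is stated as ``straightforward\dots validated numerically''), your explicit rationalization of $\sqrt{c^2\cos^2\theta+a}-c\cos\theta$ followed by exact integration is, if anything, a more concrete path to the same endpoint, though like the paper you stop short of fully carrying out the last one-variable verification.
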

The following is one of the principal results of this paper.
It is proved in Theorem~\ref{thm_big_inequality}
in Section~\ref{sec_principal} (see also Remark~\ref{rmk_42}),
and is illustrated in Experiments 2, 3 in Section~\ref{sec_numerical}.
\begin{prop}
Suppose that $n>0$ is an even integer number, 
and that $\lambda_n$ is the $n$th eigenvalue of the
integral operator $F_c$,
defined via \eqref{eq_pswf_fc}, \eqref{eq_prolate_integral}
in Section~\ref{sec_pswf}. 
Suppose also that
\begin{align}
n > \frac{2c}{\pi} + \sqrt{42}.
\end{align}
Suppose furthermore that the real number $\zeta(n,c)$ is defined
via the formula
\begin{align}
\zeta(n,c) = & \;
\frac{7}{2|\psi_n(0)|} \cdot
\frac{ \left(4 \cdot \chi_n/c^2- 2 \right)^{4} }
     {       3 \cdot \chi_n/c^2 - 1            }  \cdot
\left(\chi_n-c^2\right)^{\frac{1}{4}} \cdot \nonumber \\
& \; \exp\left[
-\sqrt{\chi_n} \cdot \left(
F\left( \sqrt{\frac{\chi_n-c^2}{\chi_n}} \right) -
E\left( \sqrt{\frac{\chi_n-c^2}{\chi_n}} \right)
\right)
\right],
\label{eq_zeta_n_c_prop}
\end{align}
where $\chi_n$ is the $n$th eigenvalue of the differential operator $L_c$,
defined via \eqref{eq_intro_lc} in Section~\ref{sec_intro}, and
$F,E$ are the complete elliptic integrals,
defined, respectively, via \eqref{eq_F}, \eqref{eq_E}
in Section~\ref{sec_elliptic}.
Then,
\begin{align}
| \lambda_n | < \zeta(n,c).
\label{eq_big_inequality_prop}
\end{align}
\label{prop_big_inequality}
\end{prop}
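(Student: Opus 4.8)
The plan is to derive an explicit integral representation for $\lambda_n$ and then bound it term-by-term using the recent results on $\chi_n$ and the Legendre expansion. The starting point will be Theorem~\ref{thm_rokhlin}, which already gives $\lambda_n = i^n \nu(n,c) \cdot e^{R(n,c)}$ with $R(n,c) = \int_0^c \left( (2(\psi_n^\tau(1))^2 - 1)/(2\tau) - n/\tau \right) d\tau$. Since $n$ is even, $|\lambda_n| = \nu(n,c) e^{R(n,c)}$, so the whole task reduces to estimating the exponent $R(n,c)$ from above. The target $\zeta(n,c)$ has the structure of $p_0(n,c)$ from Theorem~\ref{thm_approx} (the same $\exp[-\sqrt{\chi_n}(F-E)]$ factor) multiplied by an explicit algebraic prefactor in $\chi_n/c^2$ and by $1/|\psi_n(0)|$, which strongly suggests the proof runs by controlling how the elliptic-integral exponent arises from $R(n,c)$ and absorbing the remaining discrepancies into the prefactor.

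First I would try to replace the implicit quantity $\nu(n,c) e^{R(n,c)}$ by something expressible through $\chi_n$. The natural device is to relate $\lambda_n$ directly to the value $\psi_n(0)$ and the coefficients $\beta_k^{(n,c)}$ of the Legendre expansion \eqref{eq_num_leg_exp}: evaluating the eigenfunction identity \eqref{eq_prolate_integral} at a suitable point (for even $n$, at $x=0$) and using the known integrals of Legendre polynomials against $e^{icxt}$ gives a formula for $\lambda_n$ in terms of $\psi_n(0)$ and the expansion coefficients. This is where the factor $1/|\psi_n(0)|$ in $\zeta(n,c)$ enters, and Theorem~\ref{thm_psi0} supplies the needed control of $1/|\psi_n(0)|$ (though here it appears directly in $\zeta$ rather than being bounded away). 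The elliptic-integral factor should then emerge from a WKB/Liouville-Green analysis of the solution of the ODE \eqref{eq_prolate_ode} on the region $\chi_n > c^2$, where the turning-point structure of $(1-x^2)\psi'' - 2x\psi' + (\chi_n - c^2x^2)\psi = 0$ produces exactly the phase integral $\sqrt{\chi_n}\,(F-E)$ evaluated at modulus $\sqrt{(\chi_n-c^2)/\chi_n}$.

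Concretely, the key steps in order: (1) write $\lambda_n$ (or $|\lambda_n|$) as $1/|\psi_n(0)|$ times an explicit functional of $\psi_n$, using the integral equation and the parity of $\psi_n$; (2) estimate that functional by the WKB approximation to $\psi_n$, converting the decay of $\psi_n$ across the classically forbidden region into the exponential factor $\exp[-\sqrt{\chi_n}(F-E)]$; (3) control all the error terms and the algebraic prefactors using the bounds $c^2 < \chi_n < (\pi(n+1)/2)^2$ from Theorem~\ref{thm_n_khi_simple} and Theorem~\ref{thm_khi_n_square}, together with the hypothesis $n > 2c/\pi + \sqrt{42}$, which via Theorem~\ref{thm_n_and_khi} guarantees $\chi_n > c^2$ with a quantitative gap (so that $\chi_n/c^2 - 1$ is bounded below and the denominators $3\chi_n/c^2 - 1$ stay positive). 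The rational function $(4\chi_n/c^2 - 2)^4/(3\chi_n/c^2 - 1)$ and the factor $(\chi_n - c^2)^{1/4}$ should come out as explicit majorants of the WKB amplitude and of the correction to the turning-point behavior.

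The main obstacle I expect is step~(2): making the WKB decay estimate fully \emph{non-asymptotic} with an honest, explicit constant. Theorem~\ref{thm_approx} gives only the asymptotic statement with an $O(1/\sqrt{cn})$ relative error and \emph{without proof}, so I cannot borrow it; instead I must prove a one-sided inequality that is valid for all $n$ in the stated range, not just in the large-$c$ limit. This means tracking the Liouville-Green error term rigorously through the turning point at $x = c/\sqrt{\chi_n}$, where the standard WKB expansion degenerates and an Airy-type comparison is needed, and then bounding that error uniformly rather than asymptotically. I anticipate that the explicit numerical constant $7/2$ and the precise powers in the prefactor of $\zeta(n,c)$ are exactly the price paid for closing this error estimate by elementary, if somewhat delicate, comparison arguments on the ODE \eqref{eq_prolate_ode}; managing those constants while keeping the inequality one-sided is the crux of the argument.
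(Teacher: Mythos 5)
Your opening move is the same as the paper's: evaluating the eigenfunction identity \eqref{eq_prolate_integral} at $x=0$ for even $n$ gives $\lambda_n\psi_n(0)=\int_{-1}^1\psi_n(t)\,dt=\sqrt2\,a_1^{(n,c)}$, which is exactly Theorem~\ref{thm_lambda_a} and is where the $1/|\psi_n(0)|$ in $\zeta(n,c)$ comes from. After that, however, your plan diverges and, more importantly, stops being a proof. The entire content of the inequality is the upper bound on $|a_1^{(n,c)}|$ by the quantity $\exp[-\sqrt{\chi_n}(F-E)]$ times an explicit algebraic prefactor, and you propose to obtain it from a non-asymptotic Liouville--Green/WKB analysis of the ODE \eqref{eq_prolate_ode} through the turning point at $x=c/\sqrt{\chi_n}$. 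You yourself flag this as the main obstacle, and rightly so: that is precisely the step you never carry out, and there is no indication of how an Airy-type comparison with uniformly controlled error would produce the specific prefactor $\frac{7}{2}(4\chi_n/c^2-2)^4(3\chi_n/c^2-1)^{-1}(\chi_n-c^2)^{1/4}$ rather than some unspecified constant. As written, the proposal identifies the target and the difficulty but does not close the gap.

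The paper avoids the turning-point problem entirely by working with the three-term recurrence for the Legendre coefficients $a_k^{(n,c)}$ (Theorems~\ref{thm_aknc}--\ref{thm_gamma}). Since $\sum_k (a_k^{(n,c)})^2=1$, one has $|a_1^{(n,c)}|\le 1/|\alpha_{k_0}|$ where $\alpha_k=a_k^{(n,c)}/a_1^{(n,c)}$, so it suffices to show that the ratios $\alpha_k$ grow by a definite factor at each step up to $k_0\approx\tfrac12\sqrt{\chi_n-c^2}$. After two rescalings the consecutive ratios $r_k=\gamma_{k+1}/\gamma_k$ are bounded below by the larger root $\sigma_k$ of the characteristic quadratic $x^2-(B_k^I+B_k^{II})x+1=0$ of the recurrence (Theorems~\ref{thm_r_sigma}, \ref{thm_r_sigma_ineq}), and the product $\sigma_2\cdots\sigma_{k_0-1}$ is compared to the integral $\exp\int_0^{\sqrt{\chi_n-c^2}/2}\log g_n(x)\,dx$ (Theorem~\ref{thm_product}), which Theorem~\ref{thm_integral} evaluates in closed form as $\exp[\sqrt{\chi_n}(F-E)]$. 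The constants $7/2$, the fourth power, and the $(\chi_n-c^2)^{1/4}$ all come from explicit bookkeeping in this discrete argument (the value of $\gamma_2$, the factor $g_n(0)^{-4}$, and the bound $k_0<\tfrac12\sqrt{\chi_n-c^2}$), not from an ODE error analysis. If you want to salvage your plan, the recurrence route is the one that actually delivers a one-sided, fully explicit bound; the WKB route would require a uniform turning-point error estimate that neither you nor the paper supplies.
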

\begin{remark}
It follows from the combination of
Remark~\ref{rem_psi0} in Section~\ref{sec_pswf}
and Proposition~\ref{prop_khi_2} above that
\begin{align}
\zeta(n,c) = 
 O((\delta c)^{1/4}) \cdot
\exp\left[
-\sqrt{\chi_n} \cdot \left(
F\left( \sqrt{\frac{\chi_n-c^2}{\chi_n}} \right) -
E\left( \sqrt{\frac{\chi_n-c^2}{\chi_n}} \right)
\right)
\right],
\label{eq_zeta_approx}
\end{align}
where $n, \delta$ are as in \eqref{eq_n_greater},
\eqref{eq_delta_khi_2}.
\label{rem_big}
\end{remark}
In the following proposition, we describe 
another upper
bound on $| \lambda_n |$, which is weaker than the one presented
in Proposition~\ref{prop_big_inequality}, but has a simpler form.
It is proved in Theorem~\ref{thm_simple_inequality}
in Section~\ref{sec_weaker}.
\begin{prop}
Suppose that $n>0$ is an even integer number, 
and that $\lambda_n$ is the $n$th eigenvalue of the
integral operator $F_c$,
defined via \eqref{eq_pswf_fc}, \eqref{eq_prolate_integral}
in Section~\ref{sec_pswf}. 
Suppose also that
\begin{align}
n > \frac{2c}{\pi} + \sqrt{42}.
\end{align}
Suppose furthermore that
the real number $\eta(n,c)$ is defined via the formula
\begin{align}
& \eta(n,c) = \nonumber \\
& 18 \cdot (n+1) \cdot \left( \frac{\pi \cdot (n+1)}{c} \right)^7 \cdot
 \exp\left[
-\sqrt{\chi_n} \cdot \left(
F\left( \sqrt{\frac{\chi_n-c^2}{\chi_n}} \right) -
E\left( \sqrt{\frac{\chi_n-c^2}{\chi_n}} \right)
\right)
\right],
\label{eq_eta_n_c_prop}
\end{align}
where $\chi_n$ is the $n$th eigenvalue of the differential operator $L_c$,
defined via \eqref{eq_intro_lc} in Section~\ref{sec_intro}, and
$F,E$ are the complete elliptic integrals,
defined, respectively, via \eqref{eq_F}, \eqref{eq_E}
in Section~\ref{sec_elliptic}.
Then,
\begin{align}
| \lambda_n | < \eta(n,c).
\label{eq_simple_inequality_prop}
\end{align}
\label{prop_simple_inequality}
\end{prop}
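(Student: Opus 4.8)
The plan is to obtain $\eta(n,c)$ by \emph{simplifying} the sharper bound $\zeta(n,c)$ of Proposition~\ref{prop_big_inequality}, whose hypotheses ($n>0$ even, $n > 2c/\pi + \sqrt{42}$) coincide exactly with those of the present statement. The key observation is that $\zeta(n,c)$ and $\eta(n,c)$ carry the \emph{identical} exponential factor $\exp[-\sqrt{\chi_n}(F(\sqrt{(\chi_n-c^2)/\chi_n}) - E(\sqrt{(\chi_n-c^2)/\chi_n}))]$. Hence it suffices to majorize the algebraic prefactor of $\zeta(n,c)$,
\[
P := \frac{7}{2|\psi_n(0)|} \cdot \frac{(4\chi_n/c^2 - 2)^4}{3\chi_n/c^2 - 1} \cdot (\chi_n - c^2)^{1/4},
\]
by the simpler prefactor $18\,(n+1)\,(\pi(n+1)/c)^7$ of $\eta(n,c)$; the chain $|\lambda_n| < \zeta(n,c) < \eta(n,c)$ then yields \eqref{eq_simple_inequality_prop} immediately.

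To prepare the ground, I would first note that $n > 2c/\pi + \sqrt{42}$ forces $n \geq 7$ and $n \geq 2c/\pi$, so Theorem~\ref{thm_n_and_khi} gives $\chi_n > c^2$; this is precisely the standing hypothesis of Theorems~\ref{thm_psi0} and~\ref{thm_khi_n_square} and of Proposition~\ref{prop_big_inequality}. Writing $\rho := \chi_n/c^2 > 1$ for brevity, I would then assemble three ingredients: the estimate $1/|\psi_n(0)| \leq 4\sqrt{n\rho}$ from Theorem~\ref{thm_psi0}; the bound $\chi_n < (\pi(n+1)/2)^2$ from Theorem~\ref{thm_khi_n_square}, which supplies both $\rho < (\pi(n+1)/(2c))^2$ and $(\chi_n - c^2)^{1/4} < \chi_n^{1/4} < \sqrt{\pi(n+1)/2}$; and the elementary inequalities $4\rho - 2 < 4\rho$ together with $3\rho - 1 > 2\rho$ (the latter holding exactly because $\rho > 1$), which collapse the rational factor to $(4\rho-2)^4/(3\rho-1) < 128\,\rho^3$.

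Substituting these into $P$ I expect to reach
\[
P < 14\sqrt{n\rho} \cdot 128\,\rho^3 \cdot (\chi_n - c^2)^{1/4} = 1792\,\sqrt{n}\,\rho^{7/2}\,(\chi_n - c^2)^{1/4}.
\]
Bounding $\rho^{7/2} < (\pi(n+1)/(2c))^7$ and $\sqrt{n}\,(\chi_n - c^2)^{1/4} < \sqrt{\pi/2}\,(n+1)$ (via $n(n+1) < (n+1)^2$), then extracting the factor $2^{-7}$ from $(\pi(n+1)/(2c))^7$, leaves $P < 14\sqrt{\pi/2}\,(n+1)\,(\pi(n+1)/c)^7$. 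The argument is a routine chaining of the quoted inequalities, with no genuine analytic obstacle, since everything descends from the stronger Proposition~\ref{prop_big_inequality}. The only point requiring real care is the closing numerical check that the accumulated constant obeys $14\sqrt{\pi/2} \approx 17.55 < 18$, which is exactly what licenses rounding up to the clean constant $18$ in $\eta(n,c)$; once this is verified we have $P < 18\,(n+1)\,(\pi(n+1)/c)^7$, and the proof is complete.
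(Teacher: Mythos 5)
Your proposal is correct and follows essentially the same route as the paper's proof of Theorem~\ref{thm_simple_inequality}: majorize the prefactor of $\zeta(n,c)$ using Theorem~\ref{thm_psi0} for $1/|\psi_n(0)|$, Theorem~\ref{thm_khi_n_square} for $\chi_n<(\pi(n+1)/2)^2$, and the bound $(4\rho-2)^4/(3\rho-1)<2^7\rho^3$, arriving at the same constant $14\sqrt{\pi/2}\approx 17.55<18$. In fact your closing arithmetic is the corrected version of the paper's final display \eqref{eq_simple_3}, whose last equality appears to drop a factor of $4$; the constant $18$ in $\eta(n,c)$ is consistent with your value.
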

\begin{remark}
According to Proposition~\ref{prop_simple_inequality}, 
\begin{align}
\eta(n,c) =
O(c) \cdot
\exp\left[
-\sqrt{\chi_n} \cdot \left(
F\left( \sqrt{\frac{\chi_n-c^2}{\chi_n}} \right) -
E\left( \sqrt{\frac{\chi_n-c^2}{\chi_n}} \right)
\right)
\right],
\label{eq_eta_approx}
\end{align}
as long as $n$ is proportional to $c$.
\label{rem_simple}
\end{remark}

Both $\zeta(n,c)$ and $\eta(n,c)$, defined, respectively,
via \eqref{eq_zeta_n_c_prop} in Proposition~\ref{prop_big_inequality}
and \eqref{eq_eta_n_c_prop} in Proposition~\ref{thm_simple_inequality},
depend on $\chi_n$, which 
somewhat obscures their behavior.
In the following proposition, 
we eliminate this inconvenience by providing yet another
upper bound on $|\lambda_n|$.
The simplicity of this bound, as well as the fact that it depends only on $n$ 
and $c$ (and not on $\chi_n$), make Proposition~\ref{prop_crude_inequality}
the principal result of this paper.

It is proved in Theorem~\ref{thm_crude_inequality} in Section~\ref{sec_weaker}
and is illustrated via Experiment 3 in Section~\ref{sec_numerical}.
\begin{prop}
Suppose that $c>0$ is a real number, and that
\begin{align}
c > 22.
\label{eq_c22}
\end{align}
Suppose also that $\delta>0$ is a real number, and that
\begin{align}
3 < \delta < \frac{\pi c}{16}.
\label{eq_delta_crude_prop}
\end{align}
Suppose, in addition,that $n$ is a positive integer, and that
\begin{align}
n \geq \frac{2}{\pi} c + \frac{2}{\pi^2} \cdot \delta \cdot
    \log\left( \frac{4e\pi c}{\delta} \right).
\label{eq_n_crude_prop}
\end{align}
Suppose furthermore that the real number $\xi(n,c)$ is defined
via the formula
\begin{align}
\xi(n,c) = 7056 \cdot c \cdot 
\exp\left[-\delta\left(1 - \frac{\delta}{2\pi c}\right) \right].
\label{eq_xi_n_c_prop}
\end{align}
Then, 
\begin{align}
| \lambda_n | < \xi(n,c).
\label{eq_crude_inequality_prop}
\end{align}
\label{prop_crude_inequality}
\end{prop}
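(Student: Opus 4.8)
The plan is to derive \eqref{eq_crude_inequality_prop} from the sharper but $\chi_n$-dependent bound $|\lambda_n| < \zeta(n,c)$ of Proposition~\ref{prop_big_inequality}, by eliminating both $\psi_n(0)$ and $\chi_n$ in favor of $c$ and $\delta$. First I would bound the factor $1/|\psi_n(0)|$ appearing in \eqref{eq_zeta_n_c_prop} by $4\sqrt{n\chi_n/c^2}$ (Theorem~\ref{thm_psi0}), and then bound $n$ by $\sqrt{\chi_n}$, which follows from Theorem~\ref{thm_n_khi_simple} together with $E(k)\le\pi/2$. Writing $r=\chi_n/c^2$, $k_n=\sqrt{(\chi_n-c^2)/\chi_n}$, and $S=\sqrt{\chi_n}\,(F(k_n)-E(k_n))$ for the quantity inside the exponential, these substitutions collapse the algebraic prefactor of $\zeta$ into a clean form,
\[
\zeta(n,c)\ \le\ 14\,c\,\Psi(r)\,e^{-S},\qquad \Psi(r)=r^{3/4}(r-1)^{1/4}\,\frac{(4r-2)^4}{3r-1}.
\]
Since $14\cdot 504=7056$, proving \eqref{eq_crude_inequality_prop} reduces to establishing $\Psi(r)\,e^{-S}\le 504\,e^{-T}$, where $T=\delta\bigl(1-\delta/(2\pi c)\bigr)$ is the exponent in \eqref{eq_xi_n_c_prop}.

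Next I would pin down the range of $r$. Hypothesis \eqref{eq_n_crude_prop} is exactly the hypothesis of Proposition~\ref{prop_khi_1}, so $\chi_n>c^2+(4/\pi)\delta c$, i.e. $r>r_0:=1+\tfrac{4\delta}{\pi c}$; and \eqref{eq_delta_crude_prop} forces $\tfrac{12}{\pi c}<r_0-1<\tfrac14$, so $r_0\in(1,5/4)$. I would then show that $G(r):=\Psi(r)\,e^{-S(r)}$ is non-increasing on $[r_0,\infty)$: the logarithmic derivative of $\Psi$ is $\tfrac{3}{4r}+\tfrac{1}{4(r-1)}+\tfrac{16}{4r-2}-\tfrac{3}{3r-1}$, which is $O(1)$ away from $r=1$, whereas $\partial_r S$ grows like $\tfrac{\pi}{4}c$ times a bounded positive factor, so for $c>22$ the exponential decay dominates the algebraic growth and $G'\le 0$. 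Consequently it suffices to control $G$ at the left endpoint, $G(r)\le G(r_0)=\Psi(r_0)\,e^{-S(r_0)}$.

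The remaining task is the endpoint estimate $\Psi(r_0)\,e^{-S(r_0)}\le 504\,e^{-T}$, which I would split into $\Psi(r_0)\le 504$ and $S(r_0)\ge T$. The first is easy: $\Psi$ vanishes as $r\to 1^+$ and is increasing on $(1,5/4]$, with $\Psi(r_0)\le\Psi(5/4)\approx 25\ll 504$, leaving ample slack. The second is the crux. Setting $\epsilon_0=\tfrac{4\delta}{\pi c}=r_0-1\in(0,1/4]$ and $k_0^2=\epsilon_0/(1+\epsilon_0)$, one has $S(r_0)=c\sqrt{1+\epsilon_0}\,(F(k_0)-E(k_0))$, and a short computation shows that $S(r_0)$ and $T$ \emph{agree through second order} in $\epsilon_0$, their difference being $S(r_0)-T=\tfrac{3\pi c}{256}\,\epsilon_0^3+O(\epsilon_0^4)\ge 0$. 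Thus the crude bound $F-E>\tfrac{\pi}{4}k^2$ of \eqref{eq_F_minus_E}, and even its first correction, are too weak; instead I would use the first several terms of the termwise-positive power series of $F-E$ in $k^2$ to get a lower bound of the form $F-E\ge\tfrac{\pi}{2}\bigl(\tfrac12 k^2+\tfrac{3}{16}k^4+\tfrac{15}{128}k^6\bigr)$, reducing $S(r_0)\ge T$ to a polynomial inequality in $\epsilon_0$ to be verified on $(0,1/4]$. Combining the three pieces yields $\zeta(n,c)\le 14c\cdot 504\,e^{-T}=7056\,c\,e^{-T}=\xi(n,c)$, hence \eqref{eq_crude_inequality_prop}.

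The hard part will be the exponent comparison $S(r_0)\ge T$: because the two sides cancel to second order, the elliptic-integral lower bound must be sharp enough to expose the positive cubic term, and the concluding polynomial check over $\epsilon_0\in(0,1/4]$ must be done carefully. Two secondary points also need attention. First, the monotonicity of $G$ near $r_0$ is delicate because $(r-1)^{1/4}$ has an unbounded logarithmic derivative as $r\to 1^+$; here I would use $r_0-1>\tfrac{12}{\pi c}$ to bound the $(r-1)^{-1}$ term and $c>22$ to ensure $\partial_r S$ still dominates it. Second, Proposition~\ref{prop_big_inequality} is stated under $n>2c/\pi+\sqrt{42}$ and for even $n$; for small $c$ with $\delta$ near $3$ this threshold need not follow from \eqref{eq_n_crude_prop}, so I would either re-derive the $\zeta$-estimate using only $\chi_n>c^2+(4/\pi)\delta c$ (which \eqref{eq_n_crude_prop} does guarantee) or verify the threshold directly, and treat odd $n$ through the corresponding odd-index bound.
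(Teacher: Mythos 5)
Your strategy is sound and reaches the right place, but it is not the paper's route, so let me compare. The paper also starts from $|\lambda_n|<\zeta(n,c)$ and splits the work into a prefactor bound and an exponent bound, and on the exponent the two arguments are the same in content: your ``crux'' $S(r_0)\ge T$ is, after the substitution $\sqrt{\chi_n}\,(F-E)=c\,x\,G(x)$ with $x=(\chi_n-c^2)/c^2$, exactly the lower bound $G(x)\ge\tfrac{\pi}{4}(1-x/8)$ of Theorem~\ref{thm_big_g} combined with the lower bound $\chi_n-c^2>4\delta c/\pi$ of Proposition~\ref{prop_khi_1}; the paper packages this through the inverse function $H$ and Theorems~\ref{thm_big_h}--\ref{thm_simple_exp} (with the elementary proofs omitted), whereas you propose to prove it directly from the termwise-positive series of $F-E$ --- a more self-contained derivation, and your observation that the two sides cancel to second order in $\epsilon_0$ correctly explains why the crude bound \eqref{eq_F_minus_E} cannot work and why three series terms are needed (at $\epsilon_0=1/4$ the three-term check passes by a margin of only about $10^{-3}$, so the arithmetic must be exact). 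Where you genuinely diverge is the prefactor. The paper needs an \emph{upper} bound on $\chi_n-c^2$ to control $\Psi$, and obtains it by re-parametrizing: it chooses $\hat{\delta}\in(3,\pi c/8)$ so that $n$ sits at the top of the admissible range of Theorem~\ref{thm_khi_n_upper}, then applies Theorem~\ref{thm_slow_term} to get the constant $7056\,c$. You avoid any upper bound on $\chi_n$ by proving that the whole product $\Psi(r)e^{-S(r)}$ is decreasing in $r$ on $[r_0,\infty)$ and evaluating at the left endpoint; this is cleaner conceptually, yields a better constant ($14\cdot\Psi(5/4)\approx 350$ instead of $7056$), and dispenses with the $\hat{\delta}$ trick entirely.

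Two points need repair. First, the monotonicity of $\Psi(r)e^{-S(r)}$ is the load-bearing new step and is only sketched: near $r=r_0$ the term $\tfrac{1}{4(r-1)}\le\tfrac{\pi c}{48}$ in $\partial_r\log\Psi$ must be beaten by $\partial_r S\approx\tfrac{\pi c}{4}$, and the crude uniform bound $\partial_r S\ge \tfrac{\pi c}{8\sqrt{r}}$ is \emph{not} sufficient at $c$ just above $22$ --- you will need a sharper estimate of $\tfrac{d}{dx}[xG(x)]$ for small $x$ and a separate treatment of large $r$, so this is a real calculation, not a remark. Second, your fallback for odd $n$, ``the corresponding odd-index bound,'' does not exist: the entire Legendre machinery (Theorems~\ref{thm_aknc}--\ref{thm_lambda_a}) is built on even-degree polynomials and on $\psi_n(0)\ne 0$, which fails for odd $n$. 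The correct fix is the one the paper uses: the eigenvalues are ordered so that $|\lambda_n|\le|\lambda_{n-1}|$, and one passes to a nearby even index (adjusting $\delta$ slightly, which costs nothing against the constant $7056$). With those two items supplied, your argument goes through.
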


\subsection{Accuracy of Upper Bounds on $|\lambda_n|$}
\label{sec_inac}
In this subsection, we discuss the accuracy of 
the upper bounds on $|\lambda_n|$, presented in
Propositions~\ref{prop_big_inequality},
~\ref{prop_simple_inequality},
~\ref{prop_crude_inequality}.
In this discussion, we use the analysis of Section~\ref{sec_analytical};
previously reported results; and numerous
numerical experiments, some of which are described
in Section~\ref{sec_numerical}. Throughout this subsection, we suppose that
$n$ is a positive integer in the range
\begin{align}
\frac{2c}{\pi} < n < \frac{2c}{\pi} + O(\log(c)).
\label{eq_n_inac}
\end{align}
According to the combination of
Theorem~\ref{thm_approx} in Section~\ref{sec_pswf} and Remark~\ref{rem_big}, 
\begin{align}
\frac{ \zeta(n,c) }{ |\lambda_n| } = O(c^{3/4}),
\label{eq_inac_1}
\end{align}
where $\zeta(n,c)$ is that
of Proposition~\ref{prop_big_inequality}. On the other hand,
both $|\lambda_n|$ and $\zeta(n,c)$ 
decay with $n$ roughly exponentially, at the same rate.
Thus, the inequality \eqref{eq_big_inequality_prop} in 
Proposition~\ref{prop_big_inequality} is reasonably tight
(see also Experiment 2, Experiment 3 in Section~\ref{sec_numerical}).

The factor $O(c^{3/4})$ in \eqref{eq_inac_1}
is an artifact of the analysis in Section~\ref{sec_expansion}.
The first source of inaccuracy is the inequality
\eqref{eq_ineq_inac} in the proof of Theorem~\ref{thm_alphak}.
In this inequality, $\left| a_k^{(n,c)} \right|$ bounded from above by $1$,
while numerical experiments indicate that
\begin{align}
\left| a_k^{(n,c)} \right| < O(c^{-1/2}),
\end{align}
for all integer $k > 0$.
This contributes to the factor
of order $c^{1/2}$ in \eqref{eq_inac_1}. The second source of inaccuracy
is Theorem~\ref{thm_rho}, which gives rise to the factor
\begin{align}
\frac{ \left(4 \cdot \chi_n/c^2- 2 \right)^{4} }
     {       3 \cdot \chi_n/c^2 - 1            }  \cdot
\left(\chi_n-c^2\right)^{\frac{1}{4}} = O(c^{1/4})
\label{eq_inac_khi}
\end{align}
in \eqref{eq_zeta_n_c_prop} (see also Proposition~\ref{prop_khi_2}).
This contributes
to another factor of order $c^{1/4}$ in \eqref{eq_inac_1}.

In Propositions~\ref{prop_simple_inequality},
\ref{prop_crude_inequality} we introduce two additional
upper bounds on $|\lambda_n|$,
namely, $\eta(n,c)$ and $\xi(n,c)$.
Due to Remarks~\ref{rem_big}, \ref{rem_simple}
and Proposition~\ref{prop_crude_inequality},
\begin{align}
& \eta(n,c) = \zeta(n,c) \cdot O(c^{3/4}), \nonumber \\
& \xi(n,c) = \zeta(n,c) \cdot O(c^{3/4}).
\label{eq_inac_2}
\end{align}
Thus, \eqref{eq_zeta_n_c} is a tighter upper bound on $|\lambda_n|$
than both \eqref{eq_eta_n_c} and \eqref{eq_xi_n_c}.
This is not surprising, since, due to
Theorems~\ref{thm_simple_inequality}, \ref{thm_crude_inequality},
$\eta(n,c)$ and $\xi(n,c)$ can be viewed as simplified and
less accurate versions of $\zeta(n,c)$.
There are two sources of the discrepancy
\eqref{eq_inac_2}.
First, 
in the proofs of
Theorems~\ref{thm_simple_inequality}, \ref{thm_crude_inequality},
the term $\left(\chi_n-c^2\right)^{1/4}$ is bounded from above by
$O(c^{1/2})$, while, in fact, it is of order $c^{1/4}$
(see \eqref{eq_inac_khi} above). 
Additional factor of order $c^{1/2}$ in \eqref{eq_inac_2}
is due to  Theorem~\ref{thm_psi0}
and Remark~\ref{rem_psi0} in Section~\ref{sec_pswf}.
See also results of numerical experiments,
reported in Section~\ref{sec_numerical}.

Finally, we observe that the upper bound $\nu(n,c)$ on $|\lambda_n|$,
introduced in Theorem~\ref{thm_rokhlin} in Section~\ref{sec_pswf},
is useless for $n$ as in \eqref{eq_n_inac},
due to the combination of Theorem~\ref{thm_nu} and Remark~\ref{rem_nu}
in Section~\ref{sec_weaker}. On the other hand, $\nu(n,c)$ can be used
to understand the behavior of $|\lambda_n|$ as $n \to \infty$,
for a fixed $c>0$.

\section{Analytical Apparatus}
\label{sec_analytical}
The purpose of this section is to provide
the analytical apparatus to be used in the
rest of the paper. This principal results
of this section are 
Theorems~\ref{thm_big_inequality}, \ref{thm_simple_inequality}.

\subsection{Legendre Expansion}
\label{sec_expansion}
In this subsection, we analyze the Legendre expansion of PSWFs,
introduced in Section~\ref{sec_legendre}. This analysis
will be subsequently used in Section~\ref{sec_principal}
to prove the principal result of this paper.

The following theorem is a direct consequence of the results
outlined in Section~\ref{sec_pswf} and Section~\ref{sec_legendre}.
\begin{thm}
\label{thm_aknc}
Suppose that $c>0$ is a real number, and $n>0$ is an even positive integer.
Suppose also that the numbers $a_1^{(n,c)}, a_2^{(n,c)}, \dots$
are defined via the formula
\begin{align}
a_k^{(n,c)} = \int_{-1}^1 \psi_n(t) \cdot \overline{P_{2k-2}}(t) \; dt,
\label{eq_aknc_def}
\end{align}
for $k = 1,2,\dots$,
where $\psi_n$ is the $n$th PSWF corresponding to band limit $c$,
and $\overline{P_k}$ is the $k$th normalized Legendre polynomial.
Then, the sequence $\left\{a_k^{(n,c)}\right\}$ satisfies the recurrence
relation
\begin{align}
& c_1 \cdot a_2^{(n,c)} + b_1 \cdot a_1^{(n,c)} = 0, \nonumber \\
& c_{k+1} \cdot a_{k+2}^{(n,c)} +
  b_{k+1} \cdot a_{k+1}^{(n,c)} + c_k \cdot a_k^{(n,c)} = 0,
\label{eq_aknc_rec}
\end{align}
for $k \geq 1$,
where the numbers $c_1,c_2,\dots$ are defined via the formula
\begin{align}
c_k = \frac{2k \cdot (2k-1)}{(4k-1) \cdot \sqrt{ (4k-3) \cdot (4k+1) }} 
\cdot c^2,
\label{eq_ck_def}
\end{align}
for $k \geq 1$,
and the numbers $b_1,b_2,\dots$ are defined via the formula
\begin{align}
b_k = 2\cdot(k-1)\cdot(2k-1) +
  \frac{2 \cdot (2k-1) \cdot (2k-2) - 1}{(4k-1) \cdot (4k-5)} \cdot c^2 -
  \chi_n, 
\label{eq_bk_def}
\end{align}
for $k \geq 1$.
Here $\chi_n$ is the $n$th eigenvalue of the prolate differential equation
\eqref{eq_prolate_ode}. Moreover,
\begin{align}
\psi_n(t) = \sum_{k=1}^{\infty} a_k^{(n,c)} \cdot \overline{P_{2k-2}}(t),
\label{eq_aknc_expansion}
\end{align}
and
\begin{align}
\sum_{k=1}^{\infty} \left( a_k^{(n,c)} \right)^2 = 1.
\label{eq_aknc_sum_one}
\end{align}
\end{thm}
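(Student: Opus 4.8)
The plan is to obtain the entire statement by restricting the known Legendre recurrence for $\psi_n$ to even indices. First I would observe that, since $n$ is even, Theorem~\ref{thm_pswf_main} guarantees that $\psi_n$ is an even function. Because the normalized Legendre polynomial $\overline{P_k}$ has parity $(-1)^k$ (which follows from \eqref{eq_legendre_pol_0_1} and \eqref{eq_legendre_pol_rec}), the integrand $\psi_n(t)\,\overline{P_k}(t)$ is odd whenever $k$ is odd, so $\beta_k^{(n,c)}=0$ for every odd $k$. Setting $a_k^{(n,c)}=\beta_{2k-2}^{(n,c)}$ for $k\ge 1$ therefore records precisely the (possibly) nonzero coefficients, and the expansion \eqref{eq_aknc_expansion} follows at once by discarding the vanishing odd terms from \eqref{eq_num_leg_exp}.

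Next I would derive the recurrence \eqref{eq_aknc_rec} from the tridiagonal relation \eqref{eq_num_beta}, whose nonzero entries are given by \eqref{eq_num_a_matrix}. Substituting the even index $m=2k-2$ into \eqref{eq_num_a_matrix}, a direct computation shows that $A_{2k-2,\,2k}=c_k$ (matching \eqref{eq_ck_def}) and $A_{2k-2,\,2k-2}-\chi_n=b_k$ (matching \eqref{eq_bk_def}); moreover, the symmetry $A_{j,j+2}=A_{j+2,j}$ gives $A_{2k-2,\,2k-4}=A_{2k-4,\,2k-2}=c_{k-1}$. The three-term row of \eqref{eq_num_beta} at index $m=2k-2$ (with $k\ge 2$) then reads $c_k a_{k+1}^{(n,c)}+b_k a_k^{(n,c)}+c_{k-1}a_{k-1}^{(n,c)}=0$, which is exactly the second line of \eqref{eq_aknc_rec} after the index shift $k\mapsto k+1$. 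The two-term row of \eqref{eq_num_beta} at $m=0$, namely $A_{0,0}\beta_0^{(n,c)}+A_{0,2}\beta_2^{(n,c)}=\chi_n\beta_0^{(n,c)}$, becomes $b_1 a_1^{(n,c)}+c_1 a_2^{(n,c)}=0$, i.e. the first line of \eqref{eq_aknc_rec}. Only the even-index rows of \eqref{eq_num_beta} are relevant, since the odd-index rows involve the vanishing odd coefficients and reduce to $0=0$.

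Finally, the normalization \eqref{eq_aknc_sum_one} follows from Parseval's identity: since the normalized Legendre polynomials form an orthonormal basis of $L^2\left[-1,1\right]$ and $\|\psi_n\|_{L^2\left[-1,1\right]}=1$, we have $\sum_{k=0}^\infty\left(\beta_k^{(n,c)}\right)^2=1$; as the odd terms vanish, this collapses to $\sum_{k=1}^\infty\left(a_k^{(n,c)}\right)^2=1$.

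The only real labor here is the algebraic verification that the substitution $m=2k-2$ turns the entries \eqref{eq_num_a_matrix} into the coefficients \eqref{eq_ck_def} and \eqref{eq_bk_def}; this is routine but must be carried out with care, in particular the bookkeeping of the index shift that identifies $A_{2k-2,\,2k-4}$ with $c_{k-1}$ and that aligns the generic row of \eqref{eq_num_beta} with the second line of \eqref{eq_aknc_rec}.
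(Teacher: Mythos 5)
Your proposal is correct and follows essentially the same route as the paper, which simply states that the result follows by combining \eqref{eq_num_leg_exp}, \eqref{eq_num_a_matrix}, \eqref{eq_num_beta} with Theorem~\ref{thm_pswf_main} (parity of $\psi_n$) and orthonormality of the $\overline{P_k}$; you have merely filled in the index bookkeeping that the paper leaves implicit. The algebraic identifications $A_{2k-2,\,2k}=c_k$, $A_{2k-2,\,2k-2}-\chi_n=b_k$, and $A_{2k-2,\,2k-4}=c_{k-1}$ all check out.
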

\begin{proof}
To establish \eqref{eq_aknc_rec} and \eqref{eq_aknc_expansion},
we combine \eqref{eq_num_leg_exp}, \eqref{eq_num_a_matrix}, 
\eqref{eq_num_beta} in Section~\ref{sec_legendre} with 
Theorem~\ref{thm_pswf_main} in Section~\ref{sec_pswf}.
The identity \eqref{eq_aknc_sum_one} follows from the fact
that the normalized Legendre polynomials constitute an
orthonormal basis for $L^2\left[-1,1\right]$.
\end{proof}

In the rest of the section, $c>0$ is a fixed real number,
and $n>0$ is an even positive integer.

The following theorem provides an upper bound on $\left|a_1^{(n,c)}\right|$
in terms of the elements of another sequence.
\begin{thm}
Suppose that the sequence $\alpha_1,\alpha_2,\dots$ is defined via
the formula
\begin{align}
\alpha_k = \frac{ a_k^{(n,c)} }{ a_1^{(n,c)} },
\label{eq_alphak_def}
\end{align}
for $k \geq 1$,
where $a_1^{(n,c)}, a_2^{(n,c)}, \dots$ are defined via \eqref{eq_aknc_def}
in Theorem~\ref{thm_aknc}.
Then, the sequence $\alpha_1,\alpha_2,\dots$ satisfies the recurrence
relation
\begin{align}
& \alpha_1 = 1, \nonumber \\
& \alpha_2 = B_0, \nonumber \\
& \alpha_{k+2} = B_k \cdot \alpha_{k+1} - A_k \cdot \alpha_k, 
\label{eq_alphak_rec}
\end{align}
for $k \geq 1$,
where the sequence $A_1,A_2,\dots$ is defined via the formula
\begin{align}
A_k = \frac{ k \cdot (2k-1) \cdot (4k+3) }
           { (k+1) \cdot (2k+1) \cdot (4k-1) } \cdot
    \sqrt{ \frac{4k+5}{4k-3} },
\label{eq_bigak_def}
\end{align}
for $k \geq 1$,
and the sequence $B_0,B_1,\dots$ is defined via the formula
\begin{align}
B_k = \; & \left( \frac{\chi_n-2k\cdot(2k+1)}{c^2} \right) \cdot
      \frac{ (4k+3)\cdot\sqrt{ (4k+1)\cdot(4k+5) } }{ (2k+1)\cdot(2k+2) }\; -
      \nonumber \\
& \frac{ (4k\cdot(2k+1)-1) \cdot \sqrt{ (4k+1)\cdot(4k+5) } }
       { (4k-1) \cdot (2k+1) \cdot (2k+2) },
\label{eq_bigbk_def}
\end{align}
for $k \geq 0$.
Moreover, for every $k=1,2,\dots$,
\begin{align}
\left| a_1^{(n,c)} \right| \leq \frac{ 1 }{ \left| \alpha_k \right| }.
\label{eq_a1_alphak_ineq}
\end{align}
\label{thm_alphak}
\end{thm}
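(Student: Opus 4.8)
The plan is to derive the recurrence \eqref{eq_alphak_rec} for $\{\alpha_k\}$ directly from the recurrence \eqref{eq_aknc_rec} for $\{a_k^{(n,c)}\}$ supplied by Theorem~\ref{thm_aknc}, identify $A_k$ and $B_k$ as ratios of the coefficients $c_k,b_k$, and then read off the inequality \eqref{eq_a1_alphak_ineq} from the normalization \eqref{eq_aknc_sum_one}. The only preliminary point is that the ratios in \eqref{eq_alphak_def} are well defined, i.e. $a_1^{(n,c)}\neq 0$. I would establish this from \eqref{eq_aknc_rec} together with \eqref{eq_aknc_sum_one}: if $a_1^{(n,c)}=0$, then the first relation in \eqref{eq_aknc_rec} forces $a_2^{(n,c)}=0$ (since $c_1\neq 0$), and the three-term relation then propagates this to give $a_k^{(n,c)}=0$ for every $k$, contradicting \eqref{eq_aknc_sum_one}.

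Next I would divide both lines of \eqref{eq_aknc_rec} by $a_1^{(n,c)}$. This immediately yields $\alpha_1=1$, $\alpha_2=-b_1/c_1$, and $c_{k+1}\alpha_{k+2}+b_{k+1}\alpha_{k+1}+c_k\alpha_k=0$ for $k\geq 1$. Solving the last relation for $\alpha_{k+2}$ puts it in the form \eqref{eq_alphak_rec} with
\begin{align}
A_k=\frac{c_k}{c_{k+1}}, \qquad B_k=-\frac{b_{k+1}}{c_{k+1}}.
\end{align}
It then remains to verify that these two ratios coincide with the closed forms \eqref{eq_bigak_def} and \eqref{eq_bigbk_def}. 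Substituting \eqref{eq_ck_def} into $c_k/c_{k+1}$, the factors of $c^2$ cancel and the radicals combine into $\sqrt{(4k+5)/(4k-3)}$, reproducing \eqref{eq_bigak_def}. Substituting \eqref{eq_ck_def} and \eqref{eq_bk_def} into $-b_{k+1}/c_{k+1}$ and separating the part of $b_{k+1}$ carrying $\chi_n-2k(2k+1)$ from the remaining rational part yields precisely the two summands of \eqref{eq_bigbk_def}. The same computation specialized to $k=0$ confirms that $\alpha_2=-b_1/c_1=B_0$, matching the second line of \eqref{eq_alphak_rec}. This algebraic bookkeeping with the half-integer indices and nested radicals is the bulk of the work and the only place requiring care; there is no conceptual obstacle.

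Finally, for \eqref{eq_a1_alphak_ineq} I would combine \eqref{eq_aknc_sum_one} with the relation $a_k^{(n,c)}=\alpha_k\cdot a_1^{(n,c)}$, which gives
\begin{align}
1=\sum_{j=1}^{\infty}\left(a_j^{(n,c)}\right)^2=\left(a_1^{(n,c)}\right)^2\sum_{j=1}^{\infty}\alpha_j^2.
\end{align}
In particular $\sum_{j}\alpha_j^2=1/\left(a_1^{(n,c)}\right)^2$ is finite, which justifies the rearrangement. Since every term $\alpha_j^2$ is nonnegative, the full sum dominates the single term $\alpha_k^2$, so $\left(a_1^{(n,c)}\right)^2\alpha_k^2\leq\left(a_1^{(n,c)}\right)^2\sum_j\alpha_j^2=1$. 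Hence $\left|a_1^{(n,c)}\right|\cdot|\alpha_k|\leq 1$, which is exactly \eqref{eq_a1_alphak_ineq} (the case $\alpha_k=0$ being trivial, as the right-hand side is then infinite).
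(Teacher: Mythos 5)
Your proposal is correct and follows essentially the same route as the paper: divide the three-term recurrence \eqref{eq_aknc_rec} by $a_1^{(n,c)}$, identify $A_k=c_k/c_{k+1}$ and $B_k=-b_{k+1}/c_{k+1}$, verify the closed forms by substituting \eqref{eq_ck_def} and \eqref{eq_bk_def}, and deduce \eqref{eq_a1_alphak_ineq} from the normalization \eqref{eq_aknc_sum_one}. Your extra care in checking $a_1^{(n,c)}\neq 0$ (which the paper leaves implicit) is a welcome refinement but does not change the argument.
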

\begin{proof}
Due to \eqref{eq_aknc_rec} in Theorem~\ref{thm_aknc}, the recurrence
relation \eqref{eq_alphak_rec} holds with $A_k,B_k$'s defined via
the formulae
\begin{align}
A_k = \frac{ c_k }{ c_{k+1} }, \quad B_k = - \frac{ b_{k+1} }{ c_{k+1} },
\end{align}
where $c_k,b_k$'s are defined, respectively, via 
\eqref{eq_ck_def} and \eqref{eq_bk_def}. We observe that
\begin{align}
\frac{1}{c_{k+1}} = \frac{ (4k+3) \cdot \sqrt{(4k+1)\cdot(4k+5)} }
                         { (2k+1) \cdot (2k+2) } \cdot \frac{1}{c^2}
\end{align}
and readily obtain both \eqref{eq_bigak_def} and \eqref{eq_bigbk_def}.
Next, due to \eqref{eq_aknc_sum_one} and \eqref{eq_alphak_def},
\begin{align}
1 \geq \left| a_k^{(n,c)} \right| = 
 \left| \frac{ a_k^{(n,c)} }{ a_1^{(n,c)} } \right| \cdot 
 \left| a_1^{(n,c)} \right|
 = \left| \alpha_k \right| \cdot \left| a_1^{(n,c)} \right|,
\label{eq_ineq_inac}
\end{align}
for all $k=1,2,\dots$,
which implies \eqref{eq_a1_alphak_ineq}.
\end{proof}
It is somewhat easier to analyze a rescaled version of the sequence
$\left\{\alpha_k\right\}$ defined via \eqref{eq_alphak_def}
in Theorem~\ref{thm_alphak}. This
observation is reflected in the following theorem.
\begin{thm}
Suppose that the sequence $\beta_1,\beta_2,\dots$ is defined via the formula
\begin{align}
\beta_k = \alpha_k \cdot \sqrt{\frac{2}{4k-3}},
\label{eq_betak_def}
\end{align}
for $k \geq 1$,
where $\alpha_1,\alpha_2,\dots$ are defined via \eqref{eq_alphak_def}
in Theorem~\ref{thm_alphak} above. Suppose also that the sequence
$B^{\chi}_0, B^{\chi}_1, \dots$ is defined via the formula
\begin{align}
B^{\chi}_k = 
\frac{(4k+1)\cdot(4k+3)}{(2k+1)\cdot(2k+2)} \cdot
\left[ \frac{\chi_n-c^2-2k\cdot(2k+1)}{c^2} \right],
\label{eq_bkhi_def}
\end{align}
for $k \geq 0$.
Then, the sequence $\beta_1,\beta_2,\dots$ satisfies the recurrence relation
\begin{align}
& \beta_1 = \sqrt{2}, \nonumber \\
& \beta_2 = \tilde{B}_0 \cdot \sqrt{2}, \nonumber \\
& \beta_{k+2} = \tilde{B}_k \cdot \beta_{k+1} - 
                \tilde{A}_k \cdot \beta_k,
\label{eq_betak_rec}
\end{align}
for $k \geq 1$,
where $\tilde{A}_0, \tilde{A}_1,\dots$ are defined via the formula
\begin{align}
\tilde{A}_k = \frac{ k \cdot (2k-1) \cdot (4k+3) }
                   { (k+1) \cdot (2k+1) \cdot(4k-1) },
\label{eq_tildeak_def}
\end{align}
for $k \geq 0$,
and $\tilde{B}_0, \tilde{B}_1, \dots$ are defined via the formula
\begin{align}
\tilde{B}_k = B^{\chi}_k + 1 + \tilde{A}_k,
\label{eq_tildebk_def}
\end{align}
for $k \geq 0$.
\label{thm_betak}
\end{thm}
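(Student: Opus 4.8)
The plan is to substitute the rescaling (\ref{eq_betak_def}) into the three-term recurrence (\ref{eq_alphak_rec}) for $\{\alpha_k\}$ from Theorem~\ref{thm_alphak} and verify that the resulting rescaled coefficients reduce to the stated closed forms. First I would invert (\ref{eq_betak_def}) to write $\alpha_k = \beta_k\sqrt{(4k-3)/2}$. Plugging this into $\alpha_{k+2} = B_k\alpha_{k+1} - A_k\alpha_k$ and dividing through by $\sqrt{(4(k+2)-3)/2} = \sqrt{(4k+5)/2}$ produces, for $k\ge 1$, a recurrence of the form $\beta_{k+2} = \tilde B_k\beta_{k+1} - \tilde A_k\beta_k$ in which the coefficients are forced to be
$$\tilde A_k = A_k\sqrt{\frac{4k-3}{4k+5}}, \qquad \tilde B_k = B_k\sqrt{\frac{4k+1}{4k+5}}.$$
It then remains only to check that these two expressions coincide with (\ref{eq_tildeak_def}) and (\ref{eq_tildebk_def}), and to treat the initial data separately.

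The $\tilde A_k$ identity is immediate: the definition (\ref{eq_bigak_def}) of $A_k$ already carries a factor $\sqrt{(4k+5)/(4k-3)}$, which is cancelled exactly by $\sqrt{(4k-3)/(4k+5)}$, leaving the rational expression (\ref{eq_tildeak_def}). For the initial conditions, $\beta_1 = \alpha_1\sqrt{2} = \sqrt{2}$ is trivial, while $\beta_2 = \alpha_2\sqrt{2/5} = B_0\sqrt{2/5}$, so the claim $\beta_2 = \tilde B_0\sqrt{2}$ is precisely the $k=0$ instance of the $\tilde B_k$ identity (note $\tilde A_0 = 0$, since its formula has a factor $k$). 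Thus both base cases follow once the general $\tilde B_k$ identity is established.

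The only substantive work is the $\tilde B_k$ identity. Here I would first observe that multiplying $B_k$ from (\ref{eq_bigbk_def}) by $\sqrt{(4k+1)/(4k+5)}$ converts the radical $\sqrt{(4k+1)(4k+5)}$ into the integer $4k+1$, so that $\tilde B_k$ becomes a rational function of $k$ and $\chi_n/c^2$. The key manipulation is to split the eigenvalue term by writing $\chi_n - 2k(2k+1) = \left(\chi_n - c^2 - 2k(2k+1)\right) + c^2$; the first piece reproduces $B^\chi_k$ of (\ref{eq_bkhi_def}) exactly, and the leftover $c^2$ cancels the $c^2$ in the denominator, producing a purely rational remainder independent of $\chi_n$. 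Matching (\ref{eq_tildebk_def}) then reduces to the single algebraic identity
$$\frac{(4k+1)(4k+3)}{(2k+1)(2k+2)} - \frac{(4k(2k+1)-1)(4k+1)}{(4k-1)(2k+1)(2k+2)} = 1 + \tilde A_k.$$

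The main obstacle, such as it is, is purely computational: confirming the last display. I would factor $(4k+1)/((2k+1)(2k+2))$ out of the left side, so that the bracketed remainder collapses to $2(4k^2+2k-1)/(4k-1)$, and then use $2k+2 = 2(k+1)$ to write the left-hand side as $(4k+1)(4k^2+2k-1)/\big((k+1)(2k+1)(4k-1)\big)$. On the right, combining over the common denominator $(k+1)(2k+1)(4k-1)$ yields the numerator $(k+1)(2k+1)(4k-1) + k(2k-1)(4k+3)$. Both numerators expand to the same cubic $16k^3 + 12k^2 - 2k - 1$, which verifies the identity. Once this polynomial check is in place, the recurrence (\ref{eq_betak_rec}) together with its initial conditions follows directly, completing the proof; no analytic input beyond the already-established recurrence of Theorem~\ref{thm_alphak} is required.
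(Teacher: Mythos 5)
Your proposal is correct and follows essentially the same route as the paper's proof: rescale the recurrence of Theorem~\ref{thm_alphak} to identify $\tilde{A}_k = A_k\sqrt{(4k-3)/(4k+5)}$ and $\tilde{B}_k = B_k\sqrt{(4k+1)/(4k+5)}$, split off $B^{\chi}_k$ by writing $\chi_n - 2k(2k+1) = (\chi_n - c^2 - 2k(2k+1)) + c^2$, and reduce the remainder to a rational identity in $k$ (the paper verifies the same cubic cancellation, just organized as subtracting $1$ and matching the leftover fraction with $\tilde{A}_k$). Your polynomial check, with both numerators equal to $16k^3+12k^2-2k-1$, is accurate.
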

\begin{proof}
Due to \eqref{eq_alphak_rec} and \eqref{eq_betak_def}, we have for 
all $k = 1,2,\dots$
\begin{align}
\beta_{k+2} & = \sqrt{\frac{2}{4k+5}} \cdot \alpha_{k+2}
              = \sqrt{\frac{2}{4k+5}} \cdot B_k \cdot \alpha_{k+1} -
                \sqrt{\frac{2}{4k+5}} \cdot A_k \cdot \alpha_k \nonumber \\
 & =  \sqrt{\frac{4k+1}{4k+5}} \cdot B_k  \cdot 
           \sqrt{\frac{2}{4k+1}} \cdot \alpha_{k+1} -
      \sqrt{\frac{4k-3}{4k+5}} \cdot A_k  \cdot
           \sqrt{\frac{2}{4k-3}} \cdot \alpha_k,
\end{align}
and hence the recurrence relation \eqref{eq_betak_rec} holds with
\begin{align}
\tilde{A}_k = \sqrt{ \frac{4k-3}{4k+5} } \cdot A_k, \quad
\tilde{B}_k = \sqrt{ \frac{4k+1}{4k+5} } \cdot B_k.
\label{eq_tildeab_ab}
\end{align}
It remains to compute $\tilde{A}_k$'s and $\tilde{B}_k$'s. First,
we observe that \eqref{eq_tildeak_def} follows immediately
from the combination of 
\eqref{eq_bigak_def} with \eqref{eq_tildeab_ab}. Second, we combine
\eqref{eq_bigbk_def} with \eqref{eq_tildeab_ab} to conclude that,
for $k=1,2,\dots$,
\begin{align}
\tilde{B}_k 
& = \left[ \frac{\chi_n-2k\cdot(2k+1)}{c^2} \right] \cdot
    \frac{ (4k+3)\cdot(4k+1) }{ (2k+1)\cdot(2k+2) } -
    \frac{(8k^2+4k-1)\cdot(4k+1)}{(4k-1)\cdot(2k+1)\cdot(2k+2)} \nonumber \\
& = \frac{(4k+1)\cdot(4k+3)}{(2k+1)\cdot(2k+2)} \cdot
\left[ \frac{\chi_n-c^2-2k\cdot(2k+1)}{c^2} \right] + \nonumber \\ 
& \quad \; \; \frac{ (4k+3)\cdot(4k+1)\cdot(4k-1)-(4k+1)\cdot(8k^2+4k-1) }
          { (4k-1)\cdot(2k+1)\cdot(2k+2) } \nonumber \\
& = \frac{(4k+1)\cdot(4k+3)}{(2k+1)\cdot(2k+2)} \cdot
\left[ \frac{\chi_n-c^2-2k\cdot(2k+1)}{c^2} \right] + 1 \; + \nonumber \\
& \; \; \; \; \frac{ (4k+3)\cdot(4k+1)\cdot(4k-1)-(4k+1)\cdot(8k^2+4k-1)-
                     (4k-1)\cdot(2k+1)\cdot(2k+2) }
          { (4k-1)\cdot(2k+1)\cdot(2k+2) } \nonumber \\
& = 
\frac{(4k+1)\cdot(4k+3)}{(2k+1)\cdot(2k+2)} \cdot
\left[ \frac{\chi_n-c^2-2k\cdot(2k+1)}{c^2} \right] + 1 + \tilde{A}_k,
\end{align}
which completes the proof.
\end{proof}
The following theorem, in which we
establish the monotonicity of both $\left\{\alpha_k\right\}$
and $\left\{\beta_k\right\}$ up to a certain value of $k$,
is a consequence
of Theorem~\ref{thm_betak}.
\begin{thm}
Suppose that $\chi_n > c^2$, and that $\beta_1, \beta_2, \dots$
are defined via \eqref{eq_betak_def} in Theorem~\ref{thm_betak}.
Suppose also that the integer $k_0$ is defined via the formula
\begin{align}
k_0 & \; = 
  \max_k \left\{ k = 1,2,\dots \; : \; 2k \cdot (2k+1) < \chi_n-c^2 \right\}
      \nonumber \\
    & \; = \max_k \left\{
        k = 1,2,\dots \; : \; k \leq 
        \frac{1}{2}\cdot\sqrt{\chi_n-c^2+\frac{1}{4}}-\frac{1}{4}
    \right\}.
\label{eq_k0_def}
\end{align}
Then, 
\begin{align}
\sqrt{2} = 
\beta_1 < \beta_2 < \dots < \beta_{k_0} < \beta_{k_0+1} < \beta_{k_0+2},
\label{eq_betak_mon}
\end{align}
and also,
\begin{align}
1 = 
\alpha_1 < \alpha_2 < \dots < \alpha_{k_0} < \alpha_{k_0+1} < \alpha_{k_0+2},
\label{eq_alphak_mon}
\end{align}
where the sequences $\left\{\alpha_k\right\}$ and $\left\{\beta_k\right\}$ 
are defined
via \eqref{eq_alphak_def} and \eqref{eq_betak_def}, respectively.
\label{thm_monotone}
\end{thm}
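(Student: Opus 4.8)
The plan is to prove the chain \eqref{eq_betak_mon} for $\{\beta_k\}$ by induction on $k$, and then to deduce \eqref{eq_alphak_mon} for $\{\alpha_k\}$ directly from the rescaling \eqref{eq_betak_def}. The crucial first step is to substitute the identity \eqref{eq_tildebk_def}, namely $\tilde B_k = B^{\chi}_k + 1 + \tilde A_k$, into the three-term recurrence \eqref{eq_betak_rec}. This reorganizes the second-order recurrence into a telescoping statement about consecutive differences: for $k \geq 1$,
\begin{align}
\beta_{k+2} - \beta_{k+1} = B^{\chi}_k \cdot \beta_{k+1} + \tilde A_k \cdot \left( \beta_{k+1} - \beta_k \right).
\label{eq_plan_diff}
\end{align}
Moreover, the very same relation covers the base case at $k=0$: by \eqref{eq_tildeak_def} the factor $k$ in the numerator forces $\tilde A_0 = 0$, so the last term of \eqref{eq_plan_diff} disappears and it reduces to $\beta_2 - \beta_1 = B^{\chi}_0 \cdot \beta_1$, consistent with $\beta_2 = \tilde B_0 \sqrt 2$.

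Next I would record the signs that make \eqref{eq_plan_diff} usable. From \eqref{eq_tildeak_def}, every factor is positive once $k \geq 1$, so $\tilde A_k > 0$ there. From \eqref{eq_bkhi_def}, the prefactor $(4k+1)(4k+3)/[(2k+1)(2k+2)]$ and $c^2$ are positive, so $B^{\chi}_k$ carries the sign of $\chi_n - c^2 - 2k(2k+1)$; by the definition \eqref{eq_k0_def} of $k_0$, this is strictly positive for exactly $0 \leq k \leq k_0$, giving $B^{\chi}_k > 0$ on that range. With $\beta_1 = \sqrt 2 > 0$ in hand, the induction is then immediate: the base inequality $\beta_1 < \beta_2$ follows from \eqref{eq_plan_diff} at $k=0$ since $\beta_2 - \beta_1 = B^{\chi}_0 \beta_1 > 0$; and if $0 < \beta_1 < \dots < \beta_{k+1}$ for some $1 \leq k \leq k_0$, then both summands on the right of \eqref{eq_plan_diff} are nonnegative and the first is strictly positive (using $B^{\chi}_k>0$, $\beta_{k+1}>0$, $\tilde A_k>0$), so $\beta_{k+2} > \beta_{k+1}$. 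Running this for $k = 0, 1, \dots, k_0$ produces the full chain $\sqrt 2 = \beta_1 < \dots < \beta_{k_0+1} < \beta_{k_0+2}$ of \eqref{eq_betak_mon}, the final inequality requiring only $B^{\chi}_{k_0} > 0$, which holds.

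To obtain \eqref{eq_alphak_mon}, I would invert \eqref{eq_betak_def} to write $\alpha_k = \beta_k \cdot \sqrt{(4k-3)/2}$. The scaling factor $\sqrt{(4k-3)/2}$ is positive and strictly increasing in $k$, and the $\beta_k$ are positive and strictly increasing on the relevant range, so their product is strictly increasing:
\begin{align}
\alpha_{k+1} = \beta_{k+1}\sqrt{\tfrac{4k+1}{2}} > \beta_k\sqrt{\tfrac{4k+1}{2}} > \beta_k\sqrt{\tfrac{4k-3}{2}} = \alpha_k,
\end{align}
where the first inequality uses $\beta_{k+1} > \beta_k$ and the second uses $\beta_k > 0$. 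Evaluating at $k=1$ recovers $\alpha_1 = \sqrt 2 \cdot \sqrt{1/2} = 1$ in agreement with \eqref{eq_alphak_rec}, completing \eqref{eq_alphak_mon}.

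The argument is essentially mechanical; the only genuine content is the reorganization \eqref{eq_plan_diff}, which converts the recurrence into a manifestly sign-definite telescoping form, together with the exact matching of the sign threshold $\chi_n - c^2 - 2k(2k+1) > 0$ to the definition of $k_0$. I do not expect a real obstacle, only two points that warrant care: verifying $\tilde A_0 = 0$ so that the base case genuinely falls under \eqref{eq_plan_diff}, and confirming that $B^{\chi}_k$ remains positive up to and including $k = k_0$, which is precisely what allows the chain to extend two indices beyond $k_0$.
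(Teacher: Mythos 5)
Your proposal is correct and follows essentially the same route as the paper: both rewrite the recurrence via $\tilde B_k = B^{\chi}_k + 1 + \tilde A_k$ into the sign-definite difference form, induct using $B^{\chi}_k > 0$ for $k \leq k_0$ (which is exactly the definition of $k_0$), and then transfer monotonicity to $\{\alpha_k\}$ through the increasing factor $\sqrt{(4k-3)/2}$. The only cosmetic difference is that you fold the base case $\beta_1 < \beta_2$ into the general difference relation by noting $\tilde A_0 = 0$, whereas the paper handles it separately via $\tilde B_0 > 1$.
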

\begin{proof}
Due to \eqref{eq_tildebk_def} in Theorem~\ref{thm_betak}
and the assumption that $\chi_n > c^2$,
\begin{align}
\tilde{B}_0 = \frac{3}{2} \cdot \frac{\chi_n-c^2}{c^2} + 1 > 1.
\label{eq_tildeb0}
\end{align}
Therefore, due to \eqref{eq_betak_rec} in Theorem~\ref{thm_betak},
\begin{align}
\beta_2 = \tilde{B}_0 \cdot \beta_1 > \beta_1.
\end{align}
By induction, suppose that $1 \leq k \leq k_0$ and assume that 
$\beta_k < \beta_{k+1}$. 
We observe that $\tilde{A}_k, \tilde{B}_k > 0$, and
combine this observation with \eqref{eq_betak_rec},
\eqref{eq_tildeak_def},
\eqref{eq_tildebk_def} and \eqref{eq_k0_def} to conclude that
\begin{align}
\beta_{k+2} = \beta_{k+1} + \tilde{B}_k \cdot \beta_{k+1} +
           \tilde{A}_k \cdot \left( \beta_{k+1} - \beta_k\right) > \beta_{k+1},
\end{align}
which implies \eqref{eq_betak_mon}.
To establish \eqref{eq_alphak_mon}, we use \eqref{eq_betak_def} and observe
that
\begin{align}
\frac{ \alpha_{k+1} }{ \alpha_k} = 
\sqrt{\frac{4k+1}{4k-3}} \cdot \frac{ \beta_{k+1} }{ \beta_k } >
\sqrt{\frac{4k+1}{4k-3}} > 1,
\end{align}
for all $1 \leq k \leq k_0+1$.
\end{proof}
In the following theorem, we bound the sequence
$\beta_1, \beta_2, \dots$,
defined via \eqref{eq_betak_def} in Theorem~\ref{thm_betak},
by another sequence from below.
\begin{thm}
Suppose that $\chi_n > c^2$, 
and that the sequence $\rho_1, \rho_2, \dots$, is defined 
via the formula
\begin{align}
\rho_k = \frac{ (4k-6) \cdot (4k-4) \cdot (4k+7) }
              { (4k-2) \cdot (4k) \cdot (4k+3) },
\label{eq_rho_def}
\end{align}
for all $k=1,2,\dots$. Suppose also that the sequence 
$A^{new}_1, A^{new}_2, \dots$ is defined via the formula
\begin{align}
A^{new}_k = \tilde{A}_k \cdot \rho_k,
\label{eq_anew_def}
\end{align}
for all $k=1,2,\dots$, where $\tilde{A}_k$ is defined via
\eqref{eq_tildeak_def} in Theorem~\ref{thm_betak}.
Suppose furthermore that the sequence $\beta^{new}_1, \beta^{new}_2, \dots$
is defined via the formulae
\begin{align}
& \beta^{new}_1 = \beta_1, \nonumber \\
& \beta^{new}_2 = \beta_2, \nonumber \\
& \beta^{new}_3 = \beta_3, \nonumber, \\
& \beta^{new}_{k+2} = 
(B^{\chi}_k + 1) \cdot \beta^{new}_{k+1} + 
A^{new}_k \cdot (\beta^{new}_{k+1} - \beta^{new}_k), 
\label{eq_betanew_def}
\end{align}
for $k \geq 2$,
where $\beta_1, \beta_2, \dots$
are defined via \eqref{eq_betak_def},
and $B^{\chi}_k$ is defined via \eqref{eq_bkhi_def}
in Theorem~\ref{thm_betak}. Then,
\begin{align}
A^{new}_k = \frac{4k-4}{4k+4} \cdot \frac{4k-6}{4k+2} \cdot 
            \frac{4k+7}{4k-1},
\label{eq_anew_for}
\end{align}
for all $k=0,1,\dots$, and also
\begin{align}
0 = A^{new}_1 < A^{new}_2 < A^{new}_3 < \dots < A^{new}_k < \dots < 1.
\label{eq_anew_mon}
\end{align}
Moreover, 
\begin{align}
\sqrt{2} = 
\beta^{new}_1 < \beta^{new}_2 < 
\dots < \beta^{new}_{k_0} < \beta^{new}_{k_0+1} < \beta^{new}_{k_0+2},
\label{eq_betanew_mon}
\end{align}
where $k_0$ is defined via 
\eqref{eq_k0_def} in Theorem~\ref{thm_monotone}.
In addition, 
\begin{align}
\beta^{new}_1 \leq \beta_1, \quad
\beta^{new}_2 \leq \beta_2, \quad \dots, \quad
\beta^{new}_{k_0+1} \leq \beta_{k_0+1}, \quad
\beta^{new}_{k_0+2} \leq \beta_{k_0+2}.
\label{eq_betanew_and_beta}
\end{align}
\label{thm_rho}
\end{thm}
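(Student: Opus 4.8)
The plan is to treat the four assertions in the order stated, since each later claim uses the earlier ones. First I would establish the closed form \eqref{eq_anew_for} for $A^{new}_k$. This is a purely algebraic step: substituting $\tilde{A}_k$ from \eqref{eq_tildeak_def} and $\rho_k$ from \eqref{eq_rho_def} into the definition \eqref{eq_anew_def} and simplifying, the factors $(4k+3)$ and $(2k-1)$ cancel between numerator and denominator, and using $4k+4 = 4(k+1)$ and $4k+2 = 2(2k+1)$ one arrives at the product form \eqref{eq_anew_for}. From this form the assertions in \eqref{eq_anew_mon} are elementary: the factor $4k-4$ forces $A^{new}_1 = 0$; clearing denominators in $A^{new}_k < 1$ reduces it to the polynomial inequality $128k^2 + 192k - 176 > 0$, valid for every $k \geq 1$; and $A^{new}_k < A^{new}_{k+1}$ reduces in the same way to a polynomial inequality holding for $k \geq 1$. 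I would record here the companion fact $\rho_k \leq 1$, equivalently $A^{new}_k \leq \tilde{A}_k$, which follows by the same cross-multiplication and is needed below.

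The analytic heart is the comparison of $\{\beta^{new}_k\}$ with $\{\beta_k\}$. My first move would be to put both recurrences into a common shape. Using $\tilde{B}_k = B^{\chi}_k + 1 + \tilde{A}_k$ from \eqref{eq_tildebk_def}, the recurrence \eqref{eq_betak_rec} for $\beta$ rewrites as $\beta_{k+2} = (B^{\chi}_k+1)\beta_{k+1} + \tilde{A}_k(\beta_{k+1}-\beta_k)$, which is identical to the defining recurrence \eqref{eq_betanew_def} of $\beta^{new}$ except that $\tilde{A}_k$ is replaced by the smaller quantity $A^{new}_k$. Introducing the increments $d_k = \beta_{k+1}-\beta_k$ and $d^{new}_k = \beta^{new}_{k+1}-\beta^{new}_k$, both recurrences collapse to the first-order forms $d_{k+1} = B^{\chi}_k\beta_{k+1} + \tilde{A}_k d_k$ and $d^{new}_{k+1} = B^{\chi}_k\beta^{new}_{k+1} + A^{new}_k d^{new}_k$. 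The crucial structural point is that $B^{\chi}_k > 0$ for $1 \leq k \leq k_0$: by the definition \eqref{eq_k0_def} of $k_0$ the quantity $2k(2k+1)$ is strictly below $\chi_n - c^2$, so the bracket in \eqref{eq_bkhi_def} is positive. This positivity is exactly what confines the conclusions to indices up to $k_0+2$.

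With this setup I would prove \eqref{eq_betanew_mon} and \eqref{eq_betanew_and_beta} by a single interlocking induction on the increments. For monotonicity \eqref{eq_betanew_mon}, the base cases $d^{new}_1 = d_1 > 0$ and $d^{new}_2 = d_2 > 0$ hold because $\beta^{new}_1,\beta^{new}_2,\beta^{new}_3$ coincide with $\beta_1,\beta_2,\beta_3$ and $\{\beta_k\}$ is increasing up to $k_0+2$ by Theorem~\ref{thm_monotone}; the inductive step, for $2 \leq k \leq k_0$, is immediate from $d^{new}_{k+1} = B^{\chi}_k\beta^{new}_{k+1} + A^{new}_k d^{new}_k$, since every term on the right is nonnegative and $B^{\chi}_k\beta^{new}_{k+1} > 0$ once $\beta^{new}_{k+1} > 0$. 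Running in parallel, I would show $0 \leq d^{new}_k \leq d_k$. Subtracting the two increment recurrences gives
\[
d_{k+1} - d^{new}_{k+1} = B^{\chi}_k\bigl(\beta_{k+1}-\beta^{new}_{k+1}\bigr) + \bigl(\tilde{A}_k d_k - A^{new}_k d^{new}_k\bigr).
\]
The first summand is nonnegative because, by the hypothesis $d^{new}_j \leq d_j$ for $j < k+1$ together with $\beta^{new}_1 = \beta_1$, the partial sums satisfy $\beta^{new}_{k+1} \leq \beta_{k+1}$; the second is nonnegative because $\tilde{A}_k \geq A^{new}_k \geq 0$ and $d_k \geq d^{new}_k \geq 0$. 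This closes the induction, and summing $d^{new}_j \leq d_j$ from the common initial value $\beta^{new}_1 = \beta_1 = \sqrt{2}$ yields \eqref{eq_betanew_and_beta}.

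I expect the main obstacle to be the bookkeeping in this last induction rather than any single sharp estimate: monotonicity of $\beta^{new}$ and the domination $\beta^{new}_k \leq \beta_k$ cannot be separated, since the domination argument needs $d^{new}_k \geq 0$ while monotonicity needs the positivity of $B^{\chi}_k$ on the same range. Carrying both statements at the level of the increments $d_k, d^{new}_k$, rather than the sequences themselves, is what makes the signs line up; it also explains transparently why $k_0$, defined through $2k(2k+1) < \chi_n-c^2$, is the precise cutoff beyond which $B^{\chi}_k$ may change sign and the comparison breaks down.
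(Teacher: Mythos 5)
Your proposal is correct and follows essentially the same route as the paper: the closed form and bounds for $A^{new}_k$ by elementary algebra, monotonicity of $\beta^{new}_k$ from the positivity of $B^{\chi}_k$ for $k \leq k_0$, and the domination $\beta^{new}_k \leq \beta_k$ by the same coupled induction on the increments $\beta_{k+1}-\beta_k$ and $\beta^{new}_{k+1}-\beta^{new}_k$ that the paper uses. The only cosmetic difference is that you establish $0 \leq A^{new}_k < A^{new}_{k+1} < 1$ by direct cross-multiplication of the rational expressions, whereas the paper differentiates with respect to a real argument $k$ and invokes $A^{new}_k \to 1$.
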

\begin{proof}
The identity
\eqref{eq_anew_for} follows immediately from
the combination of \eqref{eq_tildeak_def} and
\eqref{eq_rho_def}.
The monotonicity of $\left\{A^{new}_k\right\}$ follows from the fact
that, if we view $A_k$ as a function of the real argument $k$,
\begin{align}
\frac{dA_k}{dk} =
\frac{
\left( \left( (3+k)\cdot 8k-19 \right) \cdot 2k - 51 \right) \cdot 8k+2 }
{(4k-1)^2 \cdot (k+1)^2 \cdot (2k+1)^2},
\end{align}
which is positive for all $k \geq 2$; combining this observation
with the fact that $A^{new}_k$ tends to 1 as $k \to \infty$,
we obtain \eqref{eq_anew_mon}. 

It follows from \eqref{eq_betanew_def}
by induction that $\beta^{new}_{j+2} > \beta^{new}_{j+1}$
as long as $B^{\chi}_j > 0$, which holds for all $j \leq k_0$, due
to \eqref{eq_bkhi_def} and \eqref{eq_k0_def}. This observation
implies \eqref{eq_betanew_mon}.

It remains to prove \eqref{eq_betanew_and_beta}.
We observe that, due to \eqref{eq_rho_def}, the sequence 
$0 = \rho_1, \rho_2, \dots$ grows monotonically and is bounded
from above by 1. Combined with \eqref{eq_anew_def}, this implies that
\begin{align}
A^{new}_k < \tilde{A}_k, \quad k = 1, 2, \dots.
\label{eq_anew_atilde}
\end{align}
Eventually, we show by induction that
\begin{align}
\beta^{new}_{k+1} - \beta^{new}_k \leq \beta_{k+1} - \beta_k \quad 
\text{and} \quad 
\beta^{new}_{k+1} \leq \beta_{k+1},
\label{eq_betanew_both}
\end{align}
for all $k=1,2,\dots,k_0+1$, with $k_0$ defined via \eqref{eq_k0_def}.
For $k=1,2$, the inequalities \eqref{eq_betanew_both}
hold due to \eqref{eq_betanew_def}. We assume that they hold for 
some $k \leq k_0$. First, we combine
\eqref{eq_bkhi_def},
\eqref{eq_betak_def},
\eqref{eq_k0_def},
\eqref{eq_betanew_def},
\eqref{eq_anew_atilde} and the induction hypothesis to conclude that
\begin{align}
\beta^{new}_{k+2} - \beta^{new}_{k+1} 
 =
B^{\chi}_k \cdot \beta^{new}_{k+1} + 
A^{new}_k \cdot (\beta^{new}_{k+1} - \beta^{new}_k)  \leq 
B^{\chi}_k \cdot \beta_{k+1} +
\tilde{A}_k \cdot (\beta_{k+1} - \beta_k).
\label{eq_beta_first}
\end{align}
Then, we combine
\eqref{eq_bkhi_def},
\eqref{eq_betak_def},
\eqref{eq_k0_def},
\eqref{eq_betanew_def}, 
\eqref{eq_anew_atilde}
and the induction hypothesis to conclude that
\begin{align}
\beta_{k+2}-\beta^{new}_{k+2} & \; =
 (B^{\chi}_k+1) \cdot (\beta_{k+1}-\beta^{new}_{k+1}) \; +
  \tilde{A}_k \cdot (\beta_{k+1}-\beta_k) - 
  A^{new}_k \cdot (\beta^{new}_{k+1}-\beta^{new}_k) \nonumber \\
& \; > 
\beta_{k+1}-\beta^{new}_{k+1} > 0,
\end{align}
which finishes the proof.
\end{proof}
Theorem~\ref{thm_rho} allows us to find a lower bound on $\beta_k$ by
finding a lower bound on $\beta^{new}_k$, for all $k \leq k_0+2$.
In the following theorem,
we simplify the recurrence relation \eqref{eq_betanew_def} by
rescaling $\left\{\beta^{new}_k\right\}$.
\begin{thm}
Suppose that $\chi_n > c^2+6$, and that the sequence
$\beta^{new}_1, \beta^{new}_2, \dots$ is defined
via \eqref{eq_betanew_def} in Theorem~\ref{thm_rho}.
Suppose also that the sequence $f_1,f_2,\dots$ is defined
via the formula
\begin{align}
f_k = \frac{ (4k-4) \cdot (4k-6) }{ 4k-1 },
\label{eq_fn_def}
\end{align}
for all $k=1,2,\dots$, and the sequence $\gamma_1, \gamma_2, \dots$
is defined via the formulae
\begin{align}
& \gamma_1 = \beta^{new}_1, \nonumber \\
& \gamma_k = f_k \cdot \beta^{new}_k, 
\label{eq_gamma_def}
\end{align}
for $k \geq 2$.
Then, the sequence $\gamma_1, \gamma_2, \dots$ satisfies,
for $k \geq 2$, the recurrence relation
\begin{align}
\label{eq_gamma1}
& \gamma_1 = \sqrt{2}, \\
\label{eq_gamma2}
& \gamma_2 = \frac{8 }{7 \sqrt{2}} \cdot 
             \left( 2 + 3 \cdot \frac{\chi_n-c^2}{c^2} \right), \\
\label{eq_gamma3}
& \gamma_3 = 
\frac{16\sqrt{2}}{11} \cdot \left(
  3+15\cdot\frac{\chi_n-c^2}{c^2} + 
  \frac{105}{8}\cdot\frac{\chi_n-c^2}{c^2}\cdot \frac{\chi_n-c^2-6}{c^2}-
  \frac{105}{2c^2} \right), \\
& \gamma_{k+2} = 
  \left(B^I_k+B^{II}_k\right) \cdot \gamma_{k+1} - \gamma_k,
\label{eq_gamma_rec}
\end{align}
where the sequences $\left\{ B^I_k \right\}$ and
$\left\{ B^{II}_k \right\}$ are defined via the formulae
\begin{align}
B^I_k = \frac{4 \cdot (4k+1) \cdot (4k+3)^2 }
             {4k \cdot (4k-2) \cdot (4k+7) } \cdot
\left[ \frac{\chi_n-c^2-2k\cdot(2k+1)}{c^2} \right],
\label{eq_b1_def}
\end{align}
for all $k=1,2,\dots$,
and
\begin{align}
B^{II}_k = 2 +
\frac{60}{32 k^4 + 32 k^3 - 38 k^2 + 7k},
\label{eq_b2_def}
\end{align}
for all $k=1,2,\dots$,
respectively. Moreover,
\begin{align}
\frac{245}{22} \cdot \frac{\chi_n-c^2-6}{c^2} =
B^I_1 > B^I_2 > \dots > B^I_{k_0} > 0,
\label{eq_b1_mon}
\end{align}
where $k_0$ is defined via \eqref{eq_k0_def}, and
\begin{align}
\frac{42}{11} = B^{II}_1 > B^{II}_2 > \dots > B^{II}_k > \dots > 2.
\label{eq_b2_mon}
\end{align}
\label{thm_gamma}
\end{thm}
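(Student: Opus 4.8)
The plan is to substitute the rescaling \eqref{eq_gamma_def} directly into the recurrence \eqref{eq_betanew_def} for $\{\beta^{new}_k\}$ from Theorem~\ref{thm_rho} and read off the three-term recurrence for $\{\gamma_k\}$. Writing \eqref{eq_betanew_def} in the form $\beta^{new}_{k+2} = (B^{\chi}_k + 1 + A^{new}_k)\beta^{new}_{k+1} - A^{new}_k \beta^{new}_k$ and replacing $\beta^{new}_j = \gamma_j/f_j$ gives
\begin{align*}
\gamma_{k+2} = (B^{\chi}_k + 1 + A^{new}_k)\,\frac{f_{k+2}}{f_{k+1}}\,\gamma_{k+1} - A^{new}_k\,\frac{f_{k+2}}{f_k}\,\gamma_k.
\end{align*}
The entire point of the choice \eqref{eq_fn_def} is to make the coefficient of $\gamma_k$ equal to $-1$, so the first step is to verify the identity $A^{new}_k\, f_{k+2}/f_k = 1$ using the explicit products \eqref{eq_anew_for} and \eqref{eq_fn_def}; this is a short cancellation of the linear factors $(4k-4)$, $(4k-6)$, $(4k+7)$, $(4k+4)$, $(4k+2)$, $(4k-1)$.

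Next I would split the coefficient of $\gamma_{k+1}$ into its $\chi_n$-dependent and its purely numerical part, setting $B^I_k = B^{\chi}_k\, f_{k+2}/f_{k+1}$ and $B^{II}_k = (1+A^{new}_k)\, f_{k+2}/f_{k+1}$. Substituting \eqref{eq_bkhi_def} and using $(4k+4)(4k+2) = 4(2k+1)(2k+2)$ collapses the $(2k+1)(2k+2)$ denominator of $B^{\chi}_k$ against the numerator of $f_{k+2}/f_{k+1}$ (reducing that factor to the constant $4$) and reproduces \eqref{eq_b1_def}. For $B^{II}_k$ I would clear denominators, expand the numerator $(4k+4)(4k+2)(4k-1) + (4k-4)(4k-6)(4k+7)$ of $1+A^{new}_k$, multiply by the $(4k+3)$ coming from $f_{k+2}/f_{k+1}$, and compare against $(4k)(4k-1)(4k-2)(4k+7) = 8\,(32k^4+32k^3-38k^2+7k)$, which yields \eqref{eq_b2_def}. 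The initial data \eqref{eq_gamma1}--\eqref{eq_gamma3} I would obtain directly: $\gamma_1 = \beta^{new}_1 = \sqrt{2}$, while $\gamma_2 = f_2\,\beta_2$ and $\gamma_3 = f_3\,\beta_3$ follow from $f_2 = 8/7$, $f_3 = 48/11$ and the values of $\beta_2,\beta_3$ furnished by the $\beta$-recurrence \eqref{eq_betak_rec} together with \eqref{eq_tildeak_def}, \eqref{eq_tildebk_def}, \eqref{eq_bkhi_def}.

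For the two monotonicity statements I proceed separately. For \eqref{eq_b1_mon} I would write $B^I_k$ as the product of the rational factor $4(4k+1)(4k+3)^2 / [4k(4k-2)(4k+7)]$ and the affine factor $[\chi_n - c^2 - 2k(2k+1)]/c^2$; the latter is positive precisely for $k \le k_0$ by the definition \eqref{eq_k0_def} of $k_0$ and is strictly decreasing in $k$, while the rational factor is positive and strictly decreasing (tending to $4$ from above), so the product strictly decreases and stays positive through $k_0$, with the stated value $B^I_1 = (245/22)(\chi_n-c^2-6)/c^2$ obtained by evaluation at $k=1$ (here the hypothesis $\chi_n > c^2+6$ guarantees $k_0 \ge 1$ and $B^I_1>0$). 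For \eqref{eq_b2_mon}, the denominator $32k^4+32k^3-38k^2+7k$ in \eqref{eq_b2_def} has derivative $128k^3+96k^2-76k+7$, which is positive for $k \ge 1$, so the denominator increases, the correction $60/(\,\cdot\,)$ decreases to $0$, and $B^{II}_k \downarrow 2$ with $B^{II}_1 = 2 + 60/33 = 42/11$.

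The main obstacle is the algebraic verification that $(1+A^{new}_k)\,f_{k+2}/f_{k+1}$ collapses to the remarkably clean form \eqref{eq_b2_def}: this amounts to the polynomial identity $\bigl[(4k+4)(4k+2)(4k-1)+(4k-4)(4k-6)(4k+7)\bigr](4k+3) = 16(32k^4+32k^3-38k^2+7k)+480$, whose quartic expansion must cancel almost entirely so that only the leading $2$ and the constant residue $480$ (i.e. the $60$ after dividing by $8$) survive. Everything else is bookkeeping once the defining relation $A^{new}_k\, f_{k+2}/f_k = 1$ — the reason \eqref{eq_fn_def} is exactly the right rescaling, producing the unit coefficient on $\gamma_k$ — has been established.
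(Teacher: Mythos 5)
Your proposal is correct and follows essentially the same route as the paper: rescale by $f_k$ so that $A^{new}_k f_{k+2}/f_k=1$ kills the coefficient of $\gamma_k$, split the remaining coefficient into $B^I_k=B^{\chi}_k f_{k+2}/f_{k+1}$ and $B^{II}_k=(1+A^{new}_k)f_{k+2}/f_{k+1}$, compute the initial values from the $\beta$-recurrence, and prove the two monotonicity claims by showing each factor of $B^I_k$ is positive and decreasing for $k\le k_0$ and that the denominator in \eqref{eq_b2_def} is increasing. The only cosmetic differences are that the paper verifies the decrease of the rational prefactor of $B^I_k$ by an explicit derivative computation (which you assert but would still need to write down) and obtains \eqref{eq_b2_def} via the identity $B^{II}_k=(f_{k+2}+f_k)/f_{k+1}$ rather than your equivalent direct polynomial expansion.
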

\begin{proof}
The identity \eqref{eq_gamma1} follows immediately from 
\eqref{eq_betanew_def} and \eqref{eq_gamma_def}. Then, it follows
from \eqref{eq_bigak_def}, \eqref{eq_bigbk_def}, that
\begin{align}
A_1 = \frac{7}{6}, \quad 
B_0 = \frac{\sqrt{5}}{2} \cdot \left(\frac{3\chi_n}{c^2}-1\right)
    = \frac{\sqrt{5}}{2} \cdot \left(2 + 3 \cdot \frac{\chi_n-c^2}{c^2}\right),
\label{eq_a1b0}
\end{align}
moreover,
\begin{align}
B_1 
& \; = \frac{7\sqrt{5}}{4} \cdot \frac{\chi_n-6}{c^2} - \frac{11\sqrt{5}}{12}
     = \frac{7\sqrt{5}}{4} \cdot \frac{\chi_n-c^2-6}{c^2}
       +\frac{7\sqrt{5}}{4} - \frac{11\sqrt{5}}{12} \nonumber \\
& \;  = \frac{\sqrt{5}}{12} \cdot
       \left( 10 + 21 \cdot \frac{\chi_n-c^2-6}{c^2} \right).
\label{eq_b1}
\end{align}
We combine \eqref{eq_a1b0} with
\eqref{eq_alphak_rec}, \eqref{eq_betak_def}, \eqref{eq_betanew_def},
\eqref{eq_fn_def}, \eqref{eq_gamma_def} to conclude that
\begin{align}
\gamma_2 = \frac{8}{7} \cdot \beta_2 
= \frac{8}{7} \cdot \sqrt{\frac{2}{5}} \cdot \alpha_2
= \frac{8}{7} \cdot \sqrt{\frac{2}{5}} \cdot B_0,
\end{align}
from which \eqref{eq_gamma2} follows. Then
we combine \eqref{eq_a1b0}, \eqref{eq_b1} with
\eqref{eq_alphak_rec}, \eqref{eq_betak_def}, \eqref{eq_betanew_def},
\eqref{eq_fn_def}, \eqref{eq_gamma_def} to conclude that
\begin{align}
\gamma_3 
& \; = \frac{48}{11} \cdot \beta_3 
     = \frac{48}{11} \cdot \frac{\sqrt{2}}{3} \cdot \alpha_3
     = \frac{48\sqrt{2}}{33} \cdot(B_1 \alpha_2 - A_1 \alpha_1)
     = \frac{48\sqrt{2}}{33} \cdot(B_1 B_0 - A_1) \nonumber \\
& \; = \frac{16\sqrt{2}}{11} \cdot
       \left( \frac{5}{24} \cdot
              \left(2+3\cdot\frac{\chi_n-c^2}{c^2}\right) \cdot
              \left(10+21\cdot\frac{\chi_n-c^2-6}{c^2}\right) - \frac{7}{6}
       \right),
\end{align}
which simplifies to yield \eqref{eq_gamma3}.
The relation \eqref{eq_gamma_rec} is established by using
\eqref{eq_bkhi_def}, \eqref{eq_betanew_def}, \eqref{eq_anew_def},
\eqref{eq_fn_def}, \eqref{eq_gamma_def} to expand,
for all $k \geq 2$,
\begin{align}
\gamma_{k+2}
& \; = f_{k+2} \cdot \beta^{new}_{k+2} = 
       f_{k+2} \cdot (B^{\chi}_k+1+A^{new}_k) \cdot \beta^{new}_{k+1} -
       f_{k+2} \cdot A^{new}_k \cdot \beta^{new}_k \nonumber \\
& \; = \frac{f_{k+2}}{f_{k+1}} 
       \cdot (B^{\chi}_k+1+A^{new}_k) \cdot \gamma_{k+1} -
       \frac{f_{k+2}}{f_k} \cdot A^{new}_k \cdot \gamma_k.
\label{eq_gamma_kplus2}
\end{align}
Since, due to \eqref{eq_anew_def}, \eqref{eq_fn_def}, we have
\begin{align}
& \frac{f_{k+2}}{f_k} \cdot A^{new}_k  = \nonumber \\
&\frac{(4n+4)\cdot(4n+2)}{4n+7} \cdot \frac{4n-1}{(4n-4)\cdot(4n-6)} \cdot
\frac{(4n-4)\cdot(4n-6)\cdot(4n+7)}{(4n+4)\cdot(4n+2)\cdot(4n-1)} = 1,
\label{eq_anew_1}
\end{align}
the identity \eqref{eq_gamma_rec} readily follows from \eqref{eq_gamma_kplus2},
\eqref{eq_anew_1}, with
\begin{align}
B^I_k = \frac{f_{k+2}}{f_{k+1}} \cdot B^{\chi}_k
\label{eq_b1k_1}
\end{align}
and
\begin{align}
B^{II}_k = \frac{f_{k+2}}{f_{k+1}} \cdot \left(A^{new}_k+1\right).
\label{eq_b2k_1}
\end{align}
We substitute \eqref{eq_bkhi_def}, \eqref{eq_fn_def} into
\eqref{eq_b1k_1} to obtain \eqref{eq_b1_def}.
Next,
\begin{align}
& \frac{d}{dk} \left[ \frac{4 \cdot (4k+1) \cdot (4k+3)^2 }
             {4k \cdot (4k-2) \cdot (4k+7) } \right] =
\frac{9}{14k^2} + \frac{512}{21 \cdot(7+4k)^2} - \frac{50}{3\cdot(2k-1)^2} \;<
\nonumber \\
& \frac{1}{(k-1/2)^2} \cdot \left(
   \frac{9}{14} + \frac{512}{21 \cdot 16} - \frac{50}{12} \right) =
-\frac{2}{(k-1/2)^2} < 0,
\label{eq_b1_before}
\end{align}
for all $k \geq 1$.
Due to \eqref{eq_k0_def},
the term inside the square brackets of \eqref{eq_b1_def} is positive
for all $k \geq k_0$ and monotonically decreases as $k$ grows, which,
combined with \eqref{eq_b1_before}, implies \eqref{eq_b1_mon}.
Eventually, we substitute \eqref{eq_anew_def}, \eqref{eq_fn_def}
into \eqref{eq_b2k_1} and use \eqref{eq_anew_1} to obtain, for all $k \geq 1$,
\begin{align}
B^{II}_k = \frac{f_{k+2} + f_k}{f_{k+1}},
\end{align}
which yields \eqref{eq_b2_def} through 
straightforward algebraic manipulations. The monotonicity relation
\eqref{eq_b2_mon} follows immediately from \eqref{eq_b2_def}.
\end{proof}
We analyze the sequence $\left\{ \gamma_k \right\}$ from 
Theorem~\ref{thm_gamma} by considering the ratios
of its consecutive elements. The latter are
bounded from below by the largest eigenvalue of 
the characteristic equation of the recurrence relation \eqref{eq_gamma_rec}.
In the following two theorems, we elaborate
on these ideas.
\begin{thm}
Suppose that $\chi_n > c^2$, and that the sequence $r_1,r_2,\dots$
is defined via the formula
\begin{align}
r_k = \frac{\gamma_{k+1}}{\gamma_k},
\label{eq_r_def}
\end{align}
for all $k=1,2,\dots$, where the sequence $\gamma_1,\gamma_2,\dots$
is defined via \eqref{eq_gamma_def} in Theorem~\ref{thm_gamma}.
Suppose also that the sequence $\sigma_1,\sigma_2,\dots$
is defined via the formula
\begin{align}
\sigma_k = \frac{B^I_k+B^{II}_k}{2} +
            \sqrt{ \left( \frac{B^I_k+B^{II}_k}{2} \right)^2 - 1 },
\label{eq_sigma_def}
\end{align}
for all $k=1,2,\dots$, where $B^I_k, B^{II}_k$ are defined via
\eqref{eq_b1_def},\eqref{eq_b2_def} in Theorem~\ref{thm_gamma},
respectively. Then, 
\begin{align}
r_2 > B^I_2 + B^{II}_2.
\label{eq_r2_b2}
\end{align}
Moreover, if $B^I_2+B^{II}_2 > 2$, then $\sigma_2 > 0$, and
\begin{align}
r_2 > \sigma_2.
\label{eq_r2_sigma2}
\end{align}
\label{thm_r_sigma}
\end{thm}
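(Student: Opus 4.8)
The plan is to treat $r_2=\gamma_3/\gamma_2$ as a genuine base case, computed directly from the explicit formulas \eqref{eq_gamma2}, \eqref{eq_gamma3}, rather than from the recurrence \eqref{eq_gamma_rec} (which only manufactures $\gamma_4,\gamma_5,\dots$ and so never produces the ratio $r_2$). Introduce the abbreviations $v=(\chi_n-c^2)/c^2$ and $w=1/c^2$, and write $S=B^I_2+B^{II}_2$. Since $\chi_n>c^2$ we have $v>0$, so $\gamma_2=\frac{8}{7\sqrt2}(2+3v)>0$, and \eqref{eq_r2_b2} is equivalent to $\gamma_3>S\,\gamma_2$. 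First I would specialise \eqref{eq_b1_def}, \eqref{eq_b2_def} to $k=2$, which gives $B^I_2=\frac{121}{20}v-121w$ and $B^{II}_2=\frac{44}{21}$, and rewrite the parenthesised factor $N$ of \eqref{eq_gamma3} in the variables $v,w$ (using $\gamma_3=\frac{16\sqrt2}{11}N$).

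The core of \eqref{eq_r2_b2} is then the single polynomial inequality obtained after clearing the positive factor $\gamma_2$, namely $\frac{28}{11}N>S\,(2+3v)$. Expanding both sides and collecting by the monomials $v^2,\;v,\;vw,\;w$ and the constant, every resulting coefficient of the difference $\frac{28}{11}N-S(2+3v)$ turns out to be strictly positive; hence the difference is a sum of nonnegative terms and is positive for all $v>0,\;w>0$. This is the only computational step, and the main ``obstacle'' is merely the careful bookkeeping of the fractions: there is no sign subtlety, and — since all coefficients are positive — the hypothesis $\chi_n>c^2$ alone suffices, with no need for the stronger $\chi_n>c^2+6$ of Theorem~\ref{thm_gamma}.

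For \eqref{eq_r2_sigma2} essentially no further work is required. By \eqref{eq_sigma_def}, $\sigma_2$ is the larger root of the characteristic polynomial $x^2-Sx+1$ of the recurrence \eqref{eq_gamma_rec}; when $S=B^I_2+B^{II}_2>2$ its discriminant is positive and both summands in \eqref{eq_sigma_def} are positive, so $\sigma_2$ is real and $\sigma_2>0$. The product of the two roots equals $1>0$, so the companion root $1/\sigma_2$ is positive, whence $\sigma_2=S-1/\sigma_2<S$. Combining this elementary bound with \eqref{eq_r2_b2} yields $r_2>S>\sigma_2$, which is precisely \eqref{eq_r2_sigma2}. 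Thus \eqref{eq_r2_b2} carries all the actual content, and \eqref{eq_r2_sigma2} drops out from the trivial fact that the larger root of $x^2-Sx+1$ lies strictly below $S$.
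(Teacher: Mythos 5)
Your proposal is correct and follows essentially the same route as the paper: compute $r_2=\gamma_3/\gamma_2$ directly from \eqref{eq_gamma2}, \eqref{eq_gamma3}, specialize $B^I_2+B^{II}_2$ via \eqref{eq_b1_def}, \eqref{eq_b2_def}, verify $r_2-(B^I_2+B^{II}_2)>0$ by elementary sign bookkeeping, and then observe that $\sigma_2$ is the larger root of $x^2-(B^I_2+B^{II}_2)x+1$, hence positive and strictly less than $B^I_2+B^{II}_2$. The only cosmetic difference is that you clear the positive factor $2+3(\chi_n-c^2)/c^2$ and check that all coefficients of the resulting polynomial in $v,w$ are positive, whereas the paper keeps that factor as $(2+3v)^{-1}$ and bounds it by $1/2$; the two computations agree.
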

\begin{proof}
We use \eqref{eq_b1_def}, \eqref{eq_b2_def} to obtain
\begin{align}
B^I_2 + B^{II}_2 = \frac{44}{21} + \frac{121}{20} \cdot 
\frac{\chi_n-c^2-20}{c^2}.
\label{eq_b2_1}
\end{align}
Next, we plug \eqref{eq_gamma2},\eqref{eq_gamma3} into \eqref{eq_r_def}
to obtain
\begin{align}
r_2 =
\frac{28}{11} \cdot
& \left(
  3+15\cdot\frac{\chi_n-c^2}{c^2} + 
  \frac{105}{8}\cdot\frac{\chi_n-c^2}{c^2}\cdot \frac{\chi_n-c^2-6}{c^2}-
  \frac{105}{2c^2} \right) \cdot \nonumber \\
& \left( 2 + 3 \cdot \frac{\chi_n-c^2}{c^2} \right)^{-1}.
\label{eq_r2_1}
\end{align}
We subtract \eqref{eq_b2_1} from \eqref{eq_r2_1} to obtain, by performing
elementary algebraic manipulations,
\begin{align}
r_2 - (B^I_2 + B^{II}_2) & \; = 
\frac{247}{77} + \frac{1119}{220} \cdot \frac{\chi_n-c^2}{c^2} -
\frac{98}{33} \cdot \left( 2 + 3 \cdot \frac{\chi_n-c^2}{c^2} \right)^{-1}
+ \frac{596}{11c^2} \nonumber \\
& \; > \frac{247}{77} - \frac{98}{66} = \frac{398}{231} > 0,
\end{align}
which implies \eqref{eq_r2_b2}. Due to \eqref{eq_sigma_def},
$\sigma_2$ is positive if and only if $B^I_2+B^{II}_2 > 2$; in that case,
\begin{align}
B^I_{2}+B^{II}_2 > \sigma_2,
\end{align}
which, combined with \eqref{eq_r2_b2}, implies \eqref{eq_r2_sigma2}.
\end{proof}
The following theorem extends Theorem~\ref{thm_r_sigma}.
\begin{thm}
Suppose that $\chi_n > c^2$, and that $k_0>2$, where $k_0$ is defined via
\eqref{eq_k0_def} in Theorem~\ref{thm_monotone}. Suppose
also that the sequences $r_1,r_2, \dots$ and $\sigma_1,\sigma_2,\dots$
are defined, respectively,
via \eqref{eq_r_def}, \eqref{eq_sigma_def}
in Theorem~\ref{thm_r_sigma}. Then,
\begin{align}
\sigma_1 > \sigma_2 > \sigma_3 > \dots > \sigma_{k_0} > 1.
\label{eq_sigma_mon}
\end{align}
In addition,
\begin{align}
r_2 > r_3 > \dots > r_{k_0} > 1.
\label{eq_r_mon}
\end{align}
Moreover,
\begin{align}
r_2 > \sigma_2 > 1, \quad r_3 > \sigma_3 > 1, \quad \dots, \quad 
r_{k_0} > \sigma_{k_0} > 1.
\label{eq_r_sigma_ineq}
\end{align}
\label{thm_r_sigma_ineq}
\end{thm}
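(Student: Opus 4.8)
The plan is to recognize that the statement is governed by the fixed-point structure of the scalar recurrence satisfied by the ratios $r_k$. Dividing the recurrence \eqref{eq_gamma_rec} by $\gamma_{k+1}$, one gets, for $k\geq 2$,
\[
r_{k+1}=\left(B^I_k+B^{II}_k\right)-\frac{1}{r_k},
\]
while \eqref{eq_sigma_def} exhibits $\sigma_k$ as the larger root of $x^2-\left(B^I_k+B^{II}_k\right)x+1=0$, so that
\[
\sigma_k+\frac{1}{\sigma_k}=B^I_k+B^{II}_k.
\]
First I would record the inputs from Theorem~\ref{thm_gamma}: since $k_0>2$ forces $\chi_n-c^2>42$, the bounds \eqref{eq_b1_mon} and \eqref{eq_b2_mon} show that $B^I_k+B^{II}_k$ is strictly decreasing in $k$ on $1\leq k\leq k_0$ and that $B^I_k+B^{II}_k>2$ throughout this range.

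For \eqref{eq_sigma_mon}, I would note that $B^I_k+B^{II}_k>2$ makes the square root in \eqref{eq_sigma_def} real and positive, whence $\sigma_k>1$; moreover the map $s\mapsto \tfrac{s}{2}+\sqrt{(s/2)^2-1}$ is strictly increasing on $(2,\infty)$, so $\sigma_k$ inherits the strict monotone decrease of $B^I_k+B^{II}_k$. This yields $\sigma_1>\sigma_2>\dots>\sigma_{k_0}>1$.

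The heart of the argument is \eqref{eq_r_sigma_ineq}, which I would prove by induction on $k$. The base case $r_2>\sigma_2>1$ is exactly Theorem~\ref{thm_r_sigma}, whose hypothesis $B^I_2+B^{II}_2>2$ holds as noted above. For the inductive step, assuming $r_k>\sigma_k$ with $2\leq k<k_0$, I would use $r_k>\sigma_k>0$ to get $1/r_k<1/\sigma_k$ and hence
\[
r_{k+1}=\left(B^I_k+B^{II}_k\right)-\frac{1}{r_k}>\left(B^I_k+B^{II}_k\right)-\frac{1}{\sigma_k}=\sigma_k>\sigma_{k+1},
\]
the last inequality being the monotonicity of $\sigma$ just established. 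This closes the induction and proves $r_k>\sigma_k>1$ for $2\leq k\leq k_0$. Finally, \eqref{eq_r_mon} follows once \eqref{eq_r_sigma_ineq} is in hand: since $x\mapsto x+1/x$ is strictly increasing on $(1,\infty)$ and $r_k>\sigma_k>1$, we have $r_k+1/r_k>\sigma_k+1/\sigma_k=B^I_k+B^{II}_k$, so $r_{k+1}=\left(B^I_k+B^{II}_k\right)-1/r_k<r_k$ for each $2\leq k<k_0$, giving $r_2>r_3>\dots>r_{k_0}$, with $r_{k_0}>\sigma_{k_0}>1$.

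I expect the only delicate points to be bookkeeping rather than conceptual: verifying that $k_0>2$ really does deliver both the positivity of $B^I_k$ (via $\chi_n-c^2>42$ and \eqref{eq_b1_mon}) and the strict inequality $B^I_k+B^{II}_k>2$ needed to invoke Theorem~\ref{thm_r_sigma} and to keep $\sigma_k>1$, and confirming the index ranges so that the scalar recurrence $r_{k+1}=(B^I_k+B^{II}_k)-1/r_k$ and the monotonicity $\sigma_k>\sigma_{k+1}$ are both valid at every step up to $k=k_0-1$. The one genuinely clever ingredient is the identity $\sigma_k+1/\sigma_k=B^I_k+B^{II}_k$ together with the monotonicity of $x\mapsto x+1/x$; these turn both assertions into one-line comparisons and avoid any further manipulation of the explicit formulas \eqref{eq_b1_def}, \eqref{eq_b2_def}.
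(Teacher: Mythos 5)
Your proposal is correct and follows essentially the same route as the paper: the base case from Theorem~\ref{thm_r_sigma}, the induction $r_{k+1}=(B^I_k+B^{II}_k)-1/r_k>(B^I_k+B^{II}_k)-1/\sigma_k=\sigma_k>\sigma_{k+1}$, and the monotonicity of $\sigma_k$ from that of $B^I_k+B^{II}_k$. The only cosmetic difference is that you deduce $r_{k+1}<r_k$ from the monotonicity of $x\mapsto x+1/x$ on $(1,\infty)$, whereas the paper phrases the same fact via the sign of the quadratic $x^2-(B^I_k+B^{II}_k)x+1$ outside the interval between its roots.
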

\begin{proof}
We combine \eqref{eq_b1_def}, \eqref{eq_b2_def}, \eqref{eq_b1_mon},
\eqref{eq_b2_mon} in Theorem~\ref{thm_gamma} 
with \eqref{eq_sigma_def} in Theorem~\ref{thm_r_sigma}
to conclude that,
for all $k=1,2,\dots,k_0$,
\begin{align}
\sigma_k > \frac{B^I_k+B^{II}_k}{2} > \frac{B^{II}_k}{2} > 1.
\label{eq_sigma_pos}
\end{align}
We use this in combination with \eqref{eq_b1_mon} and \eqref{eq_b2_mon}
to conclude that \eqref{eq_sigma_mon} holds.
Then, we use \eqref{eq_sigma_pos}
and Theorem~\ref{thm_r_sigma} to conclude that
\begin{align}
r_2 > \sigma_2 > 1.
\label{eq_r2_sigma2_1}
\end{align}
Next, we prove
\eqref{eq_r_sigma_ineq} by induction on $k \leq k_0$.
The case $k=2$ is handled by \eqref{eq_r2_sigma2_1}. Suppose that
$2 < k < k_0$, and \eqref{eq_r_sigma_ineq} is true
for $k$, i.e.
\begin{align}
r_k > \sigma_k > 1.
\label{eq_rk_sigmak}
\end{align}
 We consider the quadratic equation
\begin{align}
x^2 - (B^I_k + B^{II}_k) \cdot x + 1 = 0,
\label{eq_quadr}
\end{align}
in the unknown $x$.
Due to \eqref{eq_sigma_def} and \eqref{eq_sigma_pos}, 
$\sigma_k$ is the largest root of the quadratic equation
\eqref{eq_quadr}, and, moreover, $\sigma_k^{-1} < 1$ is its
second (smallest) root. Thus, the left hand side of \eqref{eq_quadr}
is negative if and only if $x \in (\sigma_k^{-1}, \sigma_k)$.
We combine this observation with
\eqref{eq_rk_sigmak} to conclude that
\begin{align}
r_k^2 - (B^I_k + B^{II}_k) \cdot r_k + 1 > 0,
\end{align}
and, consequently,
\begin{align}
r_k > (B^I_k + B^{II}_k) - \frac{1}{r_k}.
\label{eq_r_ric}
\end{align}
Then, we substitute \eqref{eq_r_def} into \eqref{eq_gamma_rec} to obtain
\begin{align}
r_{k+1} = \frac{\gamma_{k+2}}{\gamma_{k+1}}
        = \frac{(B^I_k + B^{II}_k) \cdot \gamma_{k+1} - \gamma_k}{\gamma_{k+1}}
        = (B^I_k + B^{II}_k) - \frac{1}{r_k}.
\label{eq_r_rec}
\end{align}
By combining \eqref{eq_r_ric} with \eqref{eq_r_rec} we conclude that
\begin{align}
r_k > r_{k+1}.
\label{eq_rk_rkp}
\end{align}
Moreover, we combine \eqref{eq_rk_sigmak} with \eqref{eq_r_rec} 
and use the fact that $\sigma_k$ is a root of \eqref{eq_quadr} to obtain
the inequality
\begin{align}
r_{k+1} = (B^I_k + B^{II}_k) - \frac{1}{r_k} >
          (B^I_k + B^{II}_k) - \frac{1}{\sigma_k} = \sigma_k.
\label{eq_rkp_sigmak}
\end{align}
However, combined with the already proved \eqref{eq_sigma_mon} and
the fact that $k < k_0$, 
the inequality \eqref{eq_rkp_sigmak} implies that
\begin{align}
r_{k+1} > \sigma_{k+1}.
\label{eq_rkp_sigmakp}
\end{align}
This completes the proof of \eqref{eq_r_sigma_ineq}. The relation
\eqref{eq_r_mon} follows from the inequality \eqref{eq_rk_rkp}
above.
\end{proof}
In the following theorem, we 
bound the product of several $\sigma_k$'s
by a definite integral.
\begin{thm}
Suppose that $\chi_n > c^2$, and that $k_0 > 2$, where
$k_0$ is defined via \eqref{eq_k0_def} in Theorem~\ref{thm_monotone}.
Suppose also that the real valued function $g_n$ is
defined via the formula
\begin{align}
g_n(x) =
 1 + 
         2 \cdot \left( \frac{\chi_n-c^2}{c^2} - 
                        \left(\frac{2x}{c}\right)^2 \right) +
  \sqrt{ \left[
         1 + 
         2 \cdot \left( \frac{\chi_n-c^2}{c^2} - 
                        \left(\frac{2x}{c}\right)^2 \right)    
         \right]^2 - 1
  },
\label{eq_gn_def}
\end{align}
for the real values of $x$ satisfying the inequality $4x^2 \leq \chi_n - c^2$.
Suppose furthermore that the sequence $\sigma_1, \sigma_2, \dots$
is defined via the formula \eqref{eq_sigma_def} in Theorem~\ref{thm_r_sigma}.
Then,
\begin{align}
\sigma_2 \cdot \sigma_3 \cdot \dots \cdot \sigma_{k_0-1} > 
(g_n(0))^{-4} \cdot 
\exp \int_0^{\left(\sqrt{\chi_n - c^2}\right)/2}
     \log \left( g_n(x) \right) \; dx.
\label{eq_sigma_prod}
\end{align}
\label{thm_product}
\end{thm}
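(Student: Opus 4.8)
The plan is to view both sides of \eqref{eq_sigma_prod} through the single increasing function $h(t)=t+\sqrt{t^2-1}$, defined for $t\ge 1$. By \eqref{eq_sigma_def} we have $\sigma_k=h(P_k)$ with $P_k=(B^I_k+B^{II}_k)/2$, and $P_k>1$ for $2\le k\le k_0$ by \eqref{eq_sigma_mon} in Theorem~\ref{thm_r_sigma_ineq}; similarly, \eqref{eq_gn_def} reads $g_n(x)=h(s(x))$ with $s(x)=1+2\left((\chi_n-c^2)/c^2-(2x/c)^2\right)$. Writing $L=\sqrt{\chi_n-c^2}/2$, the map $s$ is strictly decreasing on $[0,L]$ with $s(L)=1$, so $g_n$ (hence $\log g_n$) is decreasing and nonnegative there, and $g_n(L)=1$. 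Taking logarithms, the claimed product bound becomes the additive statement $\sum_{k=2}^{k_0-1}\log\sigma_k>\int_0^L\log g_n-4\log g_n(0)$, and this is what I would establish.

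First I would prove the pointwise comparison $\sigma_k>g_n(k+1)$ for $2\le k\le k_0-1$. Since $h$ is increasing, this reduces to $P_k>s(k+1)$. Expanding $P_k-s(k)$ using \eqref{eq_b1_def}, \eqref{eq_b2_def}, and the three elementary facts that the prefactor $\tfrac{4(4k+1)(4k+3)^2}{4k(4k-2)(4k+7)}$ exceeds $4$, that $B^{II}_k>2$ by \eqref{eq_b2_mon}, and that $\chi_n-c^2-2k(2k+1)\ge 0$ for $k\le k_0$ by \eqref{eq_k0_def}, the leading terms cancel and leave the identity
\[
P_k-s(k)=-\frac{4k}{c^2}+(\text{nonnegative terms}),
\]
so that $P_k\ge s(k)-4k/c^2$. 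Because $s(k)-s(k+1)=8(2k+1)/c^2>4k/c^2$, this gives $P_k>s(k+1)$, as required. I would emphasize here that the \emph{unshifted} comparison $\sigma_k\ge g_n(k)$ actually fails for $k$ near $k_0$, which is precisely why the argument of $g_n$ must be shifted to $k+1$.

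Next I would convert the pointwise estimate into the integral. Since $\log g_n$ is decreasing, $\log\sigma_k>\log g_n(k+1)\ge\int_{k+1}^{\min(k+2,L)}\log g_n(x)\,dx$, and summing over $k=2,\dots,k_0-1$ makes the intervals tile $[3,\min(k_0+1,L)]$, yielding $\sum_{k=2}^{k_0-1}\log\sigma_k>\int_3^{\min(k_0+1,L)}\log g_n$. Finally I would account for the omitted mass of $\int_0^L\log g_n$, namely $\int_0^3\log g_n$ together with $\int_{k_0+1}^L\log g_n$ when $k_0+1<L$. Using $0\le\log g_n\le\log g_n(0)$ and the bound $L-k_0<5/4$ (immediate from the second description of $k_0$ in \eqref{eq_k0_def} and $L=\sqrt{\chi_n-c^2}/2$), these are controlled by $3\log g_n(0)$ and $\tfrac14\log g_n(0)$ respectively, for a total strictly below $4\log g_n(0)$; exponentiating recovers \eqref{eq_sigma_prod}.

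I expect the main obstacle to be the boundary bookkeeping of the last step rather than the pointwise inequality: one must locate $k_0$ precisely relative to $L$, handle the possibly truncated top interval $[\,\cdot,L]$, and verify that the discarded portions of the integral are absorbed by exactly the factor $g_n(0)^{-4}$. The algebraic verification of $P_k-s(k)=-4k/c^2+(\text{nonnegative})$ is routine once the decomposition above is set up, and the monotonicity facts are inherited directly from \eqref{eq_b1_mon}, \eqref{eq_b2_mon} and Theorem~\ref{thm_r_sigma_ineq}.
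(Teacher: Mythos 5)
Your proposal is correct and follows essentially the same route as the paper: the identical key pointwise bound $\sigma_k > g_n(k+1)$ for $2 \le k \le k_0-1$ (the paper states it as \eqref{eq_bster_ineq}), followed by a monotonicity-based comparison of the resulting product with the integral, with the factor $g_n(0)^{-4}$ absorbing the boundary contributions. The only differences are bookkeeping: you bound the omitted mass $\int_0^3 \log g_n + \int_{k_0+1}^{L}\log g_n$ directly by $\left(3+\tfrac14\right)\log g_n(0)$ via $L-k_0<\tfrac54$, whereas the paper multiplies and divides by $g_n(0)g_n(1)g_n(2)g_n(k_0)$ and uses $2\log g_n(k_0)$ to cover $[k_0,L]$.
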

\begin{proof}
We observe that, for all $k=1,2,\dots$,
\begin{align}
4\cdot k^2 < 2k \cdot (2k+1) < 4 \cdot (k+1)^2 < 2(k+1) \cdot (2(k+1)+1).
\label{eq_k_ineq}
\end{align}
In combination with \eqref{eq_k0_def}, this implies that,
for all $k=1,\dots,k_0$,
\begin{align}
\chi_n - c^2 - 4 \cdot k^2 > 0.
\label{eq_khi_ineq4}
\end{align}
Moreover, due to \eqref{eq_b1_def}, \eqref{eq_b2_def} 
in Theorem~\ref{thm_gamma}, the inequality
\begin{align}
2 < 2 + 4 \cdot 
    \left( \frac{\chi_n-c^2}{c^2} - 
           \left( \frac{2\cdot(k+1)}{c} \right)^2 \right) 
  < B^I_k + B^{II}_k
\label{eq_bster_ineq}
\end{align}
holds for all $k=1,\dots,k_0-1$, where $B^I_k, B^{II}_k$ are defined
via \eqref{eq_b1_def}, \eqref{eq_b2_def}, respectively.
We combine \eqref{eq_bster_ineq} with \eqref{eq_sigma_def}
in Theorem~\ref{thm_r_sigma} and \eqref{eq_gn_def} above to obtain
the inequality
\begin{align}
\sigma_k > g_n(k+1),
\label{eq_sigma_gn}
\end{align}
which holds for all $k=1,\dots,k_0-1$. Consequently, using the monotonicity
of $g_n$,
\begin{align}
& \sigma_2 \cdot \sigma_3 \cdot \dots \cdot \sigma_{k_0-1} > \nonumber \\
& g_n(3) \cdot g_n(4) \cdot \dots \cdot g_n(k_0) = 
 \frac{g_n(0) \cdot g_n(1) \cdot \dots \cdot g_n(k_0-1) \cdot g_n(k_0)^2}
       {g_n(0) \cdot g_n(1) \cdot g_n(2) \cdot g_n(k_0) } > \nonumber \\
& g_n(0)^{-4} \cdot 
 \exp\left( \log(g_n(0)) + \dots + \log(g_n(k_0+1)) +
2 \cdot \log(g_n(k_0)) \right).
\label{eq_gn_1}
\end{align}
Obviously, due to \eqref{eq_khi_ineq4}, the inequality
\begin{align}
\log(g_n(k)) > \int_k^{k+1} \log(g_n(x)) \; dx
\label{eq_gn_2}
\end{align}
holds for all $k=0,\dots,k_0-1$. Next, due to \eqref{eq_k0_def} and 
\eqref{eq_k_ineq}, we have
\begin{align}
k_0 < \frac{1}{2} \sqrt{\chi_n-c^2} < k_0 + 2.
\label{eq_k0_ineq}
\end{align}
Therefore,
\begin{align}
2 \cdot \log(g_n(k_0)) 
> \left( \frac{1}{2} \sqrt{\chi_n-c^2} - k_0 \right)
\cdot \log(g_n(k_0)) > 
\int_{k_0}^{\left(\sqrt{\chi_n - c^2}\right)/2} g_n(x) \; dx.
\label{eq_gn_3}
\end{align}
Thus, the inequality \eqref{eq_sigma_prod} follows from
the combination of \eqref{eq_gn_1}, \eqref{eq_gn_2} and \eqref{eq_gn_3}.
\end{proof}
\subsection{Principal Result}
\label{sec_principal}
In this subsection, we use the tools developed in
Section~\ref{sec_expansion} to derive an upper bound
on $\left|\lambda_n\right|$. Theorem~\ref{thm_big_inequality}
is the principal result of this subsection.

In the following theorem, we simplify the integral in \eqref{eq_sigma_prod}
by expressing it in terms of elliptic functions.
\begin{thm}
Suppose that $\chi_n > c^2$, and that the real-valued function $g_n$
is defined via the formula \eqref{eq_gn_def} in Theorem~\ref{thm_product}.
Then,
\begin{align}
\int_0^{\left(\sqrt{\chi_n - c^2}\right)/2}
     \log \left( g_n(x) \right) \; dx =
\frac{\chi_n-c^2}{c} \cdot
\int_0^{\pi/2} 
\frac{ \sin^2 (\theta) \; d\theta }
     { \sqrt{1 + \frac{\chi_n-c^2}{c^2} \cdot \cos^2(\theta) } }.
\label{eq_integral}
\end{align}
Moreover,
\begin{align}
\int_0^{\left(\sqrt{\chi_n - c^2}\right)/2}
     \log \left( g_n(x) \right) \; dx =
\sqrt{\chi_n} \cdot \left[
F\left( \sqrt{\frac{\chi_n-c^2}{\chi_n}} \right) -
E\left( \sqrt{\frac{\chi_n-c^2}{\chi_n}} \right)
\right],
\label{eq_integral_fme}
\end{align}
where $F,E$ are the elliptic integrals defined, respectively,
via the formula \eqref{eq_F}, \eqref{eq_E} in Section~\ref{sec_elliptic}.
\label{thm_integral}
\end{thm}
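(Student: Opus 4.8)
The plan is to recognize the integrand $\log(g_n(x))$ as an inverse hyperbolic function, reduce the integral to a trigonometric one by a single substitution followed by an integration by parts (yielding \eqref{eq_integral}), and finally rewrite the resulting integral in terms of the complete elliptic integrals $F$ and $E$ by purely algebraic manipulation of the modulus (yielding \eqref{eq_integral_fme}). The key first observation is that, writing $u = 1 + 2\left(\frac{\chi_n-c^2}{c^2} - (2x/c)^2\right)$, the definition \eqref{eq_gn_def} reads $g_n(x) = u + \sqrt{u^2-1}$, so that $\log(g_n(x)) = \operatorname{arccosh}(u)$ whenever $u \geq 1$ (which holds precisely when $4x^2 \leq \chi_n - c^2$). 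Using the elementary identity $\operatorname{arccosh}(1+2w) = 2\operatorname{arcsinh}(\sqrt{w})$ with $w = \frac{\chi_n-c^2}{c^2} - (2x/c)^2 \geq 0$, this becomes $\log(g_n(x)) = 2\operatorname{arcsinh}\!\left(\sqrt{\frac{\chi_n-c^2}{c^2} - (2x/c)^2}\right)$.

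Next I would substitute $\frac{2x}{c} = \sqrt{(\chi_n-c^2)/c^2}\,\sin\phi$, which sends $x=0$ to $\phi=0$ and $x = \tfrac12\sqrt{\chi_n-c^2}$ to $\phi = \pi/2$, and turns the radicand into $\frac{\chi_n-c^2}{c^2}\cos^2\phi$. The integral becomes $c\sqrt{(\chi_n-c^2)/c^2}\int_0^{\pi/2}\operatorname{arcsinh}\!\big(\sqrt{(\chi_n-c^2)/c^2}\,\cos\phi\big)\cos\phi\,d\phi$. Integrating by parts, with $\sin\phi$ as the antiderivative of $\cos\phi$, the boundary term $\sin\phi\cdot\operatorname{arcsinh}(\cdots)$ vanishes at both endpoints, while differentiating the $\operatorname{arcsinh}$ factor produces exactly $\sin^2\phi/\sqrt{1 + \frac{\chi_n-c^2}{c^2}\cos^2\phi}$. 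Collecting the constant $\frac{\chi_n-c^2}{c}$ gives \eqref{eq_integral}.

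For \eqref{eq_integral_fme}, I would introduce the modulus $k = \sqrt{(\chi_n-c^2)/\chi_n}$, so that $1-k^2 = c^2/\chi_n$ and $\frac{\chi_n-c^2}{c^2} = k^2/(1-k^2)$. A short computation then shows $1 + \frac{\chi_n-c^2}{c^2}\cos^2\theta = (1 - k^2\sin^2\theta)/(1-k^2)$, which reduces the integral in \eqref{eq_integral} to $\sqrt{1-k^2}\int_0^{\pi/2}\frac{\sin^2\theta}{\sqrt{1-k^2\sin^2\theta}}\,d\theta$. The standard identity $F(k)-E(k) = k^2\int_0^{\pi/2}\frac{\sin^2\theta}{\sqrt{1-k^2\sin^2\theta}}\,d\theta$, obtained by subtracting \eqref{eq_E} from \eqref{eq_F} over a common integrand, converts this into $\sqrt{1-k^2}\,(F(k)-E(k))/k^2$; the prefactor then simplifies using $\chi_n - c^2 = k^2\chi_n$ and $\sqrt{1-k^2} = c/\sqrt{\chi_n}$ to yield $\sqrt{\chi_n}(F(k)-E(k))$, as claimed.

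The computation is essentially routine once the $\operatorname{arccosh}$/$\operatorname{arcsinh}$ structure of $\log g_n$ is spotted; I expect the only step requiring genuine care to be the integration by parts, specifically verifying that the boundary term vanishes and that the derivative of the $\operatorname{arcsinh}$ produces precisely the $\sin^2$ numerator needed. After that, the passage to elliptic integrals is bookkeeping of the modulus, valid throughout because the hypothesis $\chi_n > c^2$ guarantees $k^2 \in (0,1)$ and keeps every radical real and positive.
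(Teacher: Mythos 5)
Your proof is correct and follows essentially the same route as the paper's: a change of variables, an integration by parts whose boundary term vanishes, and the identity $F(k)-E(k)=\int_0^{\pi/2}k^2\sin^2 t\,\big(1-k^2\sin^2 t\big)^{-1/2}\,dt$ with the modulus $k=\sqrt{(\chi_n-c^2)/\chi_n}$. The only cosmetic difference is that you recognize $\log g_n$ as $2\operatorname{arcsinh}\bigl(\sqrt{(\chi_n-c^2)/c^2-(2x/c)^2}\bigr)$ and perform the trigonometric substitution before integrating by parts, whereas the paper first rescales to $[0,1]$, integrates $\log h(s)$ by parts using an explicit computation of $h'/h$, and only then substitutes $s=\sin\theta$.
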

\begin{proof}
We use \eqref{eq_gn_def} and perform the change of variable
\begin{align}
s = \frac{2x}{\sqrt{\chi_n-c^2}}
\label{eq_s_def}
\end{align}
in the left-hand side of \eqref{eq_integral} to obtain
\begin{align}
\int_0^{\left(\sqrt{\chi_n - c^2}\right)/2}
     \log \left( g_n(x) \right) \; dx & \; =
 \frac{\sqrt{\chi_n-c^2}}{2} \cdot
\int_0^1 \log\left( g_n\left(\frac{s \sqrt{\chi_n-c^2}}{2}  \right)\right)\;ds
\nonumber \\
& \; = \frac{V\cdot c}{2} \cdot
  \int_0^1 \log\left(
     1 + 2V^2(1-s^2) + \sqrt{(1 + 2V^2(1-s^2))^2-1}
  \right) \; ds \nonumber \\
& \; = \frac{V\cdot c}{2} \cdot \int_0^1 \log(h(s)) \; ds,
\label{eq_int_1}
\end{align}
where $V$ is defined via the formula
\begin{align}
V = \sqrt{ \frac{\chi_n-c^2}{c^2} },
\label{eq_v_def}
\end{align}
and the function $h:\left[0,1\right] \to \Rc$ is defined via the formula
\begin{align}
h(s) = 1 + 2V^2(1-s^2) + \sqrt{(1 + 2V^2(1-s^2))^2-1}.
\label{eq_h_def}
\end{align}
We observe that $\log(h(1)) = 0$ and $h(0)$ is finite, hence
\begin{align}
\int_0^1 \log(h(s)) \; ds = \left[ s \cdot \log(h(s)) \right]_0^1
-\int_0^1 \frac{s \cdot h'(s)}{h(s)} \; ds
=
-\int_0^1 \frac{s \cdot h'(s)}{h(s)} \; ds.
\label{eq_int_2}
\end{align}
Then, we differentiate $h(s)$, defined via \eqref{eq_h_def},
with respect to $s$ to obtain
\begin{align}
h'(s) 
& \; = -2V^2 \cdot 2s + 
\frac{ 2\cdot(1+2V^2(1-s^2)) \cdot (-2V^2 \cdot 2s) }
     { 2 \sqrt{ (1+2V^2(1-s^2))^2-1 } } \nonumber \\
& \; = -4V^2s \cdot
\left(1 + 
  \frac{ 1+2V^2(1-s^2) }
     { \sqrt{ (1+2V^2(1-s^2))^2-1 } }
\right)
= - \frac{ 4V^2s \cdot h(s) }{ \sqrt{ (1+2V^2(1-s^2))^2-1 } }.
\label{eq_dh}
\end{align}
We substitute \eqref{eq_dh} into \eqref{eq_int_2} to obtain 
\begin{align}
\int_0^1 \log(h(s)) \; ds & \; = 
 \int_0^1 \frac{ 4V^2s^2 }{ \sqrt{ (1+2V^2(1-s^2))^2-1 } } \; ds
  \nonumber \\
& \; =\int_0^1 \frac{ 4V^2s^2 }{ \sqrt{ 4V^4(1-s^2)^2+4V^2(1-s^2) } } \; ds
  \nonumber \\
& \; =
2V \cdot \int_0^1 \frac{s^2}{\sqrt{ (1-s^2) \cdot(1 + V^2(1-s^2)) }} \; ds.
\label{eq_int_3}
\end{align}
We perform the change of variable
\begin{align}
s = \sin(\theta), \quad ds = \cos(\theta) \cdot d\theta,
\label{eq_theta}
\end{align}
to transform \eqref{eq_int_3} into
\begin{align}
\int_0^1 \log(h(s)) \; ds =
2V \cdot \int_0^{\pi/2} 
\frac{ \sin^2 (\theta) \; d\theta }{ \sqrt{1 + V^2 \cdot \cos^2(\theta)} }.
\label{eq_int_4}
\end{align}
We combine \eqref{eq_int_1}, \eqref{eq_v_def} and \eqref{eq_int_4}
to obtain the formula \eqref{eq_integral}. Next,
we express \eqref{eq_integral} in terms of the elliptic integrals $F(k)$
and $E(k)$,
defined, respectively, 
via \eqref{eq_F},\eqref{eq_E} in Section~\ref{sec_elliptic}. We note that
\begin{align}
F(k)-E(k)  = 
\int_0^{\pi/2} \frac{ k^2 \sin^2 t \; dt }{\sqrt{1-k^2 \sin^2 t}} =
\frac{k^2}{\sqrt{1-k^2}} \cdot 
\int_0^{\pi/2} \frac{\sin^2 t \; dt}
 {\sqrt{1 + \frac{k^2}{1-k^2} \cdot \cos^2 t }}.
\label{eq_fme}
\end{align}
Motivated by \eqref{eq_integral} and \eqref{eq_fme},
we solve the equation
\begin{align}
\frac{k^2}{1-k^2} = \frac{\chi_n - c^2}{c^2}
\end{align}
in the unknown $k$,
to obtain the solution
\begin{align}
k = \sqrt{\frac{\chi_n - c^2}{\chi_n}}.
\label{eq_ksquare}
\end{align}
We plug \eqref{eq_ksquare} into \eqref{eq_fme} to conclude that
\begin{align}
& F\left( \sqrt{\frac{\chi_n-c^2}{\chi_n}} \right) -
E\left( \sqrt{\frac{\chi_n-c^2}{\chi_n}} \right) =
\frac{\chi_n-c^2}{c\sqrt{\chi_n}} \cdot
\int_0^{\pi/2} 
\frac{ \sin^2 (\theta) \; d\theta }
     { \sqrt{1 + \frac{\chi_n-c^2}{c^2} \cdot \cos^2(\theta) } }.
\label{eq_int_5}
\end{align}
We combine \eqref{eq_integral} with \eqref{eq_int_5} to 
obtain \eqref{eq_integral_fme}.
\end{proof}
In the following theorem, 
we establish a relationship between the eigenvalue
$\lambda_n$ of the integral operator $F_c$ defined via
\eqref{eq_pswf_fc} in Section~\ref{sec_pswf}, and the value of
$a^{(n,c)}_1$ defined via \eqref{eq_aknc_def} in Theorem~\ref{thm_aknc}.
\begin{thm}
Suppose that $n>0$ is an even integer number, and that
$\lambda_n$ is the $n$th eigenvalue of the integral
operator $F_c$ defined via \eqref{eq_pswf_fc} in Section~\ref{sec_pswf}.
In other words, $\lambda_n$ satisfies the identity
\eqref{eq_prolate_integral} in Section~\ref{sec_pswf}. Suppose also,
that the sequence $a^{(n,c)}_1, a^{(n,c)}_2, \dots$ is defined
via the formula \eqref{eq_aknc_def} in Theorem~\ref{thm_aknc}. Then,
\begin{align}
\lambda_n = \frac{\sqrt{2}}{\psi_n(0)} \cdot a^{(n,c)}_1,
\label{eq_lambda_a}
\end{align}
where $\psi_n$ is the $n$th prolate spheroidal wave function 
defined in Section~\ref{sec_pswf}.
\label{thm_lambda_a}
\end{thm}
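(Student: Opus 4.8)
The plan is to exploit the defining integral eigenequation for $\lambda_n$ by evaluating it at the single point $x=0$, and to relate the resulting integral of $\psi_n$ to $a_1^{(n,c)}$ through the definition of the zeroth normalized Legendre polynomial. First I would recall from \eqref{eq_legendre_pol_0_1} and \eqref{eq_legendre_normalized} that $\overline{P_0}(t) = P_0(t)\cdot\sqrt{1/2} = \sqrt{1/2}$, so that the definition \eqref{eq_aknc_def} of $a_1^{(n,c)}$ (the $k=1$ case, corresponding to $\overline{P_{0}}$) reads
\begin{align}
a_1^{(n,c)} = \int_{-1}^1 \psi_n(t) \cdot \overline{P_0}(t) \; dt
            = \sqrt{\frac{1}{2}} \cdot \int_{-1}^1 \psi_n(t) \; dt.
\label{eq_plan_a1}
\end{align}

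Next I would invoke the eigenvalue identity \eqref{eq_prolate_integral}, namely $\lambda_n \psi_n(x) = \int_{-1}^1 \psi_n(t) e^{icxt} \, dt$, and specialize it to $x=0$. Since $e^{ic\cdot 0 \cdot t}=1$, this collapses to
\begin{align}
\lambda_n \cdot \psi_n(0) = \int_{-1}^1 \psi_n(t) \; dt.
\label{eq_plan_x0}
\end{align}
The only point requiring justification is that $\psi_n(0) \neq 0$, so that \eqref{eq_plan_x0} may be divided through by $\psi_n(0)$. This follows from Theorem~\ref{thm_pswf_main}: because $n$ is even, $\psi_n$ is an even function whose $n$ simple roots in $(-1,1)$ occur in symmetric pairs $\pm r$, leaving the origin root-free; hence $\psi_n(0)\neq 0$.

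Finally I would combine \eqref{eq_plan_a1} and \eqref{eq_plan_x0}: solving \eqref{eq_plan_x0} for the integral and substituting into \eqref{eq_plan_a1} gives $a_1^{(n,c)} = \sqrt{1/2}\cdot \lambda_n \psi_n(0)$, which rearranges to the claimed identity
\begin{align}
\lambda_n = \frac{\sqrt{2}}{\psi_n(0)} \cdot a_1^{(n,c)}.
\label{eq_plan_final}
\end{align}
There is no genuine analytic obstacle here; the result is essentially a one-line evaluation of the eigenequation at the origin, and the entire content lies in the bookkeeping of the normalization constant $\sqrt{1/2}$ and the parity remark guaranteeing $\psi_n(0)\neq 0$. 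The reason this innocuous identity is worth isolating is that it converts the task of bounding $|\lambda_n|$ into the task of bounding $|a_1^{(n,c)}|$ together with $1/|\psi_n(0)|$, and the preceding machinery (Theorems~\ref{thm_alphak}--\ref{thm_product}) is precisely designed to produce a lower bound on $|\alpha_{k}|=|a_k^{(n,c)}/a_1^{(n,c)}|$, hence via \eqref{eq_a1_alphak_ineq} an upper bound on $|a_1^{(n,c)}|$.
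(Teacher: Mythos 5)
Your proof is correct and follows essentially the same route as the paper's: evaluate the eigenequation \eqref{eq_prolate_integral} at $x=0$ and identify $\int_{-1}^1\psi_n$ with $\sqrt{2}\,a_1^{(n,c)}$ via $\overline{P_0}\equiv\sqrt{1/2}$. Your added parity argument that $\psi_n(0)\neq 0$ for even $n$ is a justification the paper leaves tacit, but it is a welcome clarification rather than a deviation.
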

\begin{proof}
Due to \eqref{eq_prolate_integral} in Section~\ref{sec_pswf},
\eqref{eq_legendre_pol_0_1}, \eqref{eq_legendre_normalized}
in Section~\ref{sec_legendre}, and \eqref{eq_aknc_def} above,
\begin{align}
\lambda_n \cdot \psi_n(0) = \int_{-1}^1 \psi_n(t) \; dt
= \sqrt{2} \cdot \int_{-1}^1 \psi_n(t) \cdot \overline{P_0}(t) \; dt
= \sqrt{2} \cdot a^{(n,c)}_1,
\end{align}
from which \eqref{eq_lambda_a} readily follows.
\end{proof}
In the following theorem,
we provide an upper bound on $| \lambda_n |$
in terms of the elements of the sequence $\left\{ \gamma_k \right\}$,
defined via \eqref{eq_gamma_def} in Theorem~\ref{thm_gamma} above.
\begin{thm}
Suppose that $n>0$ is an even integer number, 
and that $\lambda_n$ is the $n$th eigenvalue of the
integral operator $F_c$,
defined via \eqref{eq_pswf_fc}, \eqref{eq_prolate_integral}
in Section~\ref{sec_pswf}. 
Suppose also that $\chi_n>c^2$, and that 
$k_0>2$, where $k_0$ is defined via \eqref{eq_k0_def} in
Theorem~\ref{thm_monotone}. Suppose furthermore, that
the sequence $\gamma_1, \gamma_2, \dots$ is defined
via \eqref{eq_gamma_def} in Theorem~\ref{thm_gamma}. Then,
\begin{align}
| \lambda_n | < 
\frac{2}{\left|\psi_n(0)\right|} \cdot
\frac{(4k_0-4)\cdot(4k_0-6)}{(4k_0-1)\cdot\sqrt{4k_0-3}} \cdot
\frac{1}{\gamma_{k_0}}.
\label{eq_lambda_gamma}
\end{align}
\label{thm_lambda_gamma}
\end{thm}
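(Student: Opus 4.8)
The plan is to assemble the chain of sequence identities and inequalities established in the previous theorems, all evaluated at the single index $k=k_0$. The starting point is Theorem~\ref{thm_lambda_a}, which gives the exact identity $\lambda_n=(\sqrt{2}/\psi_n(0))\cdot a_1^{(n,c)}$, so that $|\lambda_n|=(\sqrt{2}/|\psi_n(0)|)\cdot|a_1^{(n,c)}|$. The entire task therefore reduces to bounding $|a_1^{(n,c)}|$ from above, and every bound on $|a_1^{(n,c)}|$ obtained so far is phrased through the auxiliary sequences $\{\alpha_k\}$, $\{\beta_k\}$, $\{\beta^{new}_k\}$, and $\{\gamma_k\}$.

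First I would invoke the inequality \eqref{eq_a1_alphak_ineq} of Theorem~\ref{thm_alphak} with $k=k_0$, giving $|a_1^{(n,c)}|\leq 1/|\alpha_{k_0}|$. By Theorem~\ref{thm_monotone}, the hypotheses $\chi_n>c^2$ and $k_0>2$ guarantee $1=\alpha_1<\dots<\alpha_{k_0}$, so $\alpha_{k_0}>0$ and $|\alpha_{k_0}|=\alpha_{k_0}$. Next I would rewrite $\alpha_{k_0}$ in terms of $\beta_{k_0}$ using the definition \eqref{eq_betak_def}, namely $\alpha_{k_0}=\beta_{k_0}\sqrt{(4k_0-3)/2}$, and then pass to $\beta^{new}_{k_0}$ via the inequality \eqref{eq_betanew_and_beta} of Theorem~\ref{thm_rho}, which asserts $\beta^{new}_{k_0}\leq\beta_{k_0}$ (both positive by the monotonicity chains \eqref{eq_betak_mon}, \eqref{eq_betanew_mon}). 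Finally, the definition \eqref{eq_gamma_def} of $\{\gamma_k\}$ gives $\beta^{new}_{k_0}=\gamma_{k_0}/f_{k_0}$ with $f_{k_0}=(4k_0-4)(4k_0-6)/(4k_0-1)$ from \eqref{eq_fn_def}. Substituting back through the chain yields $|a_1^{(n,c)}|\leq(f_{k_0}/\gamma_{k_0})\sqrt{2/(4k_0-3)}$, and multiplying by $\sqrt{2}/|\psi_n(0)|$ collapses the prefactors via $\sqrt{2}\cdot\sqrt{2/(4k_0-3)}=2/\sqrt{4k_0-3}$ to produce exactly the claimed right-hand side.

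To obtain the \emph{strict} inequality asserted in \eqref{eq_lambda_gamma}, I would sharpen the first step. Since $\lambda_n\neq0$ by Theorem~\ref{thm_pswf_main}, the identity of Theorem~\ref{thm_lambda_a} forces $a_1^{(n,c)}\neq0$; combined with the normalization \eqref{eq_aknc_sum_one}, $\sum_k(a_k^{(n,c)})^2=1$, this yields $(a_{k_0}^{(n,c)})^2<1$ strictly, hence $|\alpha_{k_0}|\cdot|a_1^{(n,c)}|=|a_{k_0}^{(n,c)}|<1$ and therefore $|a_1^{(n,c)}|<1/\alpha_{k_0}$ strictly. This strict inequality then propagates through the remaining steps, which are equalities or inequalities between positive quantities.

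The argument is essentially bookkeeping: the genuine analytic content has already been loaded into Theorems~\ref{thm_alphak}--\ref{thm_gamma}, so the only real points of care are (i) confirming the positivity of $\alpha_{k_0}$, $\beta_{k_0}$, $\beta^{new}_{k_0}$, and $\gamma_{k_0}$ so that passing to reciprocals preserves the direction of each inequality --- all four follow from the monotonicity results together with the fact that each sequence starts at a positive value --- and (ii) the elementary algebraic simplification matching $\sqrt{2}\cdot f_{k_0}\cdot\sqrt{2/(4k_0-3)}$ to the stated coefficient $(4k_0-4)(4k_0-6)/((4k_0-1)\sqrt{4k_0-3})$. I expect no substantive obstacle here; the main risk is merely a sign or index slip in the reciprocal chain.
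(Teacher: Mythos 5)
Your proposal is correct and follows essentially the same route as the paper's own proof: combine the identity of Theorem~\ref{thm_lambda_a} with the bound \eqref{eq_a1_alphak_ineq} at $k=k_0$, then pass from $\alpha_{k_0}$ to $\beta_{k_0}$ to $\beta^{new}_{k_0}$ to $\gamma_{k_0}$ via \eqref{eq_betak_def}, \eqref{eq_betanew_and_beta}, and \eqref{eq_gamma_def}. Your explicit justification of the strictness of the inequality (via $a_1^{(n,c)}\neq 0$ and the normalization \eqref{eq_aknc_sum_one}) is a small point the paper glosses over, but otherwise the arguments coincide.
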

\begin{proof} 
We combine the inequality \eqref{eq_a1_alphak_ineq} in 
Theorem~\ref{thm_alphak} with the identity \eqref{eq_lambda_a}
in Theorem~\ref{thm_lambda_a}, to conclude that
\begin{align}
| \lambda_n | = \frac{\sqrt{2}}{| \psi_n(0) |} \cdot |a^{(n,c)}_1| <
\frac{\sqrt{2}}{| \psi_n(0) |} \cdot \frac{1}{\alpha_{k_0}} =
\frac{2}{| \psi_n(0) |} \cdot \frac{1}{\sqrt{4k_0-3}} \cdot 
\frac{1}{\beta_{k_0}},
\label{eq_laga_1}
\end{align}
where $\beta_{k_0}$ is defined via \eqref{eq_betak_def}
in Theorem~\ref{thm_betak}. Next, we combine
\eqref{eq_betanew_def}, \eqref{eq_betanew_and_beta}
in Theorem~\ref{thm_rho}, 
\eqref{eq_fn_def},\eqref{eq_gamma_def} in Theorem~\ref{thm_gamma},
and \eqref{eq_laga_1} to obtain
the inequality
\begin{align}
| \lambda_n | & \; < 
\frac{2}{| \psi_n(0) |} \cdot \frac{1}{\sqrt{4k_0-3}} \cdot 
\frac{1}{\beta_{k_0}} \leq
\frac{2}{| \psi_n(0) |} \cdot \frac{1}{\sqrt{4k_0-3}} \cdot 
\frac{1}{\beta^{new}_{k_0}} \nonumber \\
& \; =
\frac{2}{| \psi_n(0) |} \cdot
\frac{(4k_0-4)\cdot(4k_0-6)}{(4k_0-1)\cdot\sqrt{4k_0-3}} \cdot
\frac{1}{\gamma_{k_0}},
\label{eq_laga_2}
\end{align}
which is precisely \eqref{eq_lambda_gamma}.
\end{proof}
The following theorem is a direct consequence of 
Theorems~\ref{thm_n_and_khi}, 
\ref{thm_n_khi_simple} in Section~\ref{sec_pswf}.
\begin{thm}
Suppose that $n > 0$ is a positive integer. Suppose also that
$n > (2c/\pi)+\sqrt{42}$.
Then,
\begin{align}
\chi_n > c^2 + 42,
\label{eq_khi_2}
\end{align}
and also,
\begin{align}
k_0 > 2,
\label{eq_k0_gt_2}
\end{align}
where $k_0$ is defined via \eqref{eq_k0_def} in
Theorem~\ref{thm_monotone}.
\label{thm_khi_2}
\end{thm}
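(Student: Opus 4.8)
The plan is to extract both conclusions directly from the two quoted results, Theorem~\ref{thm_n_and_khi} and Theorem~\ref{thm_n_khi_simple}, treating the statement as a short corollary. First I would observe that the hypothesis $n > (2c/\pi) + \sqrt{42}$ forces $n \geq 7$ (since $\sqrt{42} > 6$ and $2c/\pi > 0$), so in particular $n \geq 2$ and the hypotheses of both quoted theorems are satisfied. Since $\sqrt{42} > 0$, we also have $n > 2c/\pi$, so the second bullet of Theorem~\ref{thm_n_and_khi} already yields $\chi_n > c^2$; this both licenses the use of Theorem~\ref{thm_n_khi_simple} and makes $D := \chi_n - c^2$ a positive quantity that I will now bound from below.

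The core of the argument is a one-line estimate of the integral in \eqref{eq_both_large_simple_prop}. Writing $\chi_n = c^2 + D$, the integrand rearranges as
\begin{align}
\frac{\chi_n - c^2 t^2}{1 - t^2} = \frac{c^2(1 - t^2) + D}{1 - t^2} = c^2 + \frac{D}{1 - t^2},
\label{eq_plan_rearrange}
\end{align}
and subadditivity of the square root gives $\sqrt{c^2 + D/(1-t^2)} \leq c + \sqrt{D}/\sqrt{1 - t^2}$. Integrating and using $\int_0^1 dt/\sqrt{1 - t^2} = \pi/2$, Theorem~\ref{thm_n_khi_simple} then supplies
\begin{align}
n < \frac{2}{\pi} \int_0^1 \sqrt{ \frac{\chi_n - c^2 t^2}{1 - t^2} } \; dt
\leq \frac{2}{\pi} \int_0^1 \left( c + \frac{\sqrt{D}}{\sqrt{1 - t^2}} \right) dt
= \frac{2c}{\pi} + \sqrt{D}.
\label{eq_plan_bound}
\end{align}
Comparing \eqref{eq_plan_bound} with the hypothesis $n > (2c/\pi) + \sqrt{42}$ immediately forces $\sqrt{D} > \sqrt{42}$, that is $\chi_n - c^2 = D > 42$, which is precisely \eqref{eq_khi_2}.

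The second conclusion \eqref{eq_k0_gt_2} is then a numerical check against the definition \eqref{eq_k0_def} of $k_0$: taking $k = 3$ gives $2k(2k+1) = 6 \cdot 7 = 42 < \chi_n - c^2$, so $k = 3$ belongs to the set over which the maximum in \eqref{eq_k0_def} is taken, whence $k_0 \geq 3 > 2$. I do not expect a genuine obstacle here; the only point requiring care is the rearrangement \eqref{eq_plan_rearrange} together with the convergence of the bounding integral near $t = 1$, where the $\sqrt{D}/\sqrt{1 - t^2}$ term remains integrable precisely because $\int_0^1 dt/\sqrt{1-t^2}$ is finite. An alternative phrasing would run the estimate as a proof by contradiction (assume $\chi_n \leq c^2 + 42$ and derive $n < (2c/\pi) + \sqrt{42}$), but the direct comparison above is cleaner and avoids any case analysis.
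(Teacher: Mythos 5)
Your proposal is correct and follows essentially the same route as the paper: the paper also rewrites the integrand of \eqref{eq_both_large_simple_prop} as $c^2 + (\chi_n - c^2)/(1-t^2)$, applies subadditivity of the square root, and integrates using $\int_0^1 dt/\sqrt{1-t^2} = \pi/2$ to compare $n$ with $(2c/\pi) + \sqrt{\chi_n - c^2}$, merely phrasing the comparison as a contradiction (assuming $\chi_n \leq c^2 + 42$) rather than your direct bound $\sqrt{D} > \sqrt{42}$. Your verification of $k_0 > 2$ via $2\cdot 3\cdot 7 = 42 < \chi_n - c^2$ matches the paper's appeal to \eqref{eq_k0_def}.
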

\begin{proof}
Suppose that $c^2 < \chi_n \geq c^2 + 2$. Then, due to 
Theorem~\ref{thm_n_and_khi},
\begin{align}
n & \; < \frac{2}{\pi} \int_0^1 \sqrt{ \frac{\chi_n-c^2t^2}{1-t^2} } \; dt
  \leq \frac{2}{\pi} \int_0^1 \sqrt{c^2+\frac{42}{1-t^2} } \; dt \nonumber\\
  & \; < \frac{2c}{\pi} + \frac{2\sqrt{42}}{\pi} \cdot
                        \int_0^1 \frac{dt}{\sqrt{1-t^2}}
       = \frac{2c}{\pi} + \sqrt{42}.
\label{eq_khi_2a}
\end{align}
We combine \eqref{eq_khi_2a} with Theorem~\ref{thm_n_and_khi}
to conclude \eqref{eq_khi_2}. Then, we combine \eqref{eq_khi_2}
with \eqref{eq_k0_def} in Theorem~\ref{thm_monotone}
to conclude \eqref{eq_k0_gt_2}.
\end{proof}

The following theorem is the principal result of this paper.
\begin{thm}
Suppose that $n>0$ is an even integer number, 
and that $\lambda_n$ is the $n$th eigenvalue of the
integral operator $F_c$,
defined via \eqref{eq_pswf_fc}, \eqref{eq_prolate_integral}
in Section~\ref{sec_pswf}. 
Suppose also that $\chi_n>c^2+42$.
Suppose furthermore that the real number $\zeta(n,c)$ is defined
via the formula
\begin{align}
\zeta(n,c) = & \;
\frac{7}{2 |\psi_n(0)|} \cdot
\frac{ \left(4 \cdot \chi_n/c^2- 2 \right)^{4} }
     {       3 \cdot \chi_n/c^2 - 1            }  \cdot
\left(\chi_n-c^2\right)^{\frac{1}{4}} \cdot \nonumber \\
& \; \exp\left[
-\sqrt{\chi_n} \cdot \left(
F\left( \sqrt{\frac{\chi_n-c^2}{\chi_n}} \right) -
E\left( \sqrt{\frac{\chi_n-c^2}{\chi_n}} \right)
\right)
\right],
\label{eq_zeta_n_c}
\end{align}
where $F,E$ are the complete elliptic integrals,
defined, respectively, via \eqref{eq_F}, \eqref{eq_E}
in Section~\ref{sec_elliptic}.
Then,
\begin{align}
| \lambda_n | < \zeta(n,c).
\label{eq_big_inequality}
\end{align}
\label{thm_big_inequality}
\end{thm}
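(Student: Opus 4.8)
The plan is to read off an upper bound on $|\lambda_n|$ from Theorem~\ref{thm_lambda_gamma} and to convert its dependence on $\gamma_{k_0}$ into the explicit elliptic-integral expression appearing in $\zeta(n,c)$ by chaining together Theorems~\ref{thm_r_sigma_ineq}, \ref{thm_product}, and \ref{thm_integral}. First I would check that the hypothesis $\chi_n>c^2+42$ supplies the standing assumptions of those theorems. Since $\chi_n-c^2>42=2\cdot 3\cdot(2\cdot 3+1)$, the definition \eqref{eq_k0_def} of $k_0$ forces $k_0\ge 3>2$, and in particular $\chi_n>c^2$; thus Theorems~\ref{thm_lambda_gamma}, \ref{thm_r_sigma_ineq}, \ref{thm_product}, and \ref{thm_integral} are all in force.

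The key step is a lower bound on $\gamma_{k_0}$. Writing the ratios $r_k=\gamma_{k+1}/\gamma_k$ from \eqref{eq_r_def}, I would telescope
\[
\gamma_{k_0}=\gamma_2\cdot\prod_{k=2}^{k_0-1}r_k,
\]
and then invoke $r_k>\sigma_k$ from \eqref{eq_r_sigma_ineq} (valid for $2\le k\le k_0$), the product estimate \eqref{eq_sigma_prod}, and the identity \eqref{eq_integral_fme} to obtain
\[
\gamma_{k_0}>\gamma_2\cdot\bigl(g_n(0)\bigr)^{-4}\cdot\exp\left[\sqrt{\chi_n}\left(F\left(\sqrt{\tfrac{\chi_n-c^2}{\chi_n}}\right)-E\left(\sqrt{\tfrac{\chi_n-c^2}{\chi_n}}\right)\right)\right].
\]
Substituting this into \eqref{eq_lambda_gamma} flips the sign of the exponential, producing exactly the factor $\exp[-\sqrt{\chi_n}(F-E)]$ that appears in \eqref{eq_zeta_n_c}.

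Next I would evaluate the remaining $\chi_n$-dependent constants and verify that they reproduce those in $\zeta(n,c)$. From \eqref{eq_gamma2}, $\gamma_2=\tfrac{8}{7\sqrt2}\,(3\chi_n/c^2-1)$, which accounts for the denominator $3\chi_n/c^2-1$ of \eqref{eq_zeta_n_c}. Evaluating \eqref{eq_gn_def} at $x=0$ gives $g_n(0)=(2\chi_n/c^2-1)+\sqrt{(2\chi_n/c^2-1)^2-1}<2(2\chi_n/c^2-1)=4\chi_n/c^2-2$, so $(g_n(0))^4<(4\chi_n/c^2-2)^4$, matching the numerator of \eqref{eq_zeta_n_c}. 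After these substitutions the bound reads
\[
|\lambda_n|<\frac{2}{|\psi_n(0)|}\cdot\frac{(4k_0-4)(4k_0-6)}{(4k_0-1)\sqrt{4k_0-3}}\cdot\frac{7\sqrt2}{8}\cdot\frac{(4\chi_n/c^2-2)^4}{3\chi_n/c^2-1}\cdot\exp\left[-\sqrt{\chi_n}\left(F-E\right)\right],
\]
so it only remains to absorb the $k_0$-prefactor into the factor $\tfrac72(\chi_n-c^2)^{1/4}$ of $\zeta(n,c)$.

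The one genuinely delicate point is the final constant: I must show
\[
\frac{2\cdot 7\sqrt2}{8}\cdot\frac{(4k_0-4)(4k_0-6)}{(4k_0-1)\sqrt{4k_0-3}}\le\frac72\,(\chi_n-c^2)^{1/4},
\]
equivalently $\frac{(4k_0-4)(4k_0-6)}{(4k_0-1)\sqrt{4k_0-3}}\le\sqrt2\,(\chi_n-c^2)^{1/4}$. Here I would use the estimate $2k_0<\sqrt{\chi_n-c^2}$ from \eqref{eq_k0_ineq}, so that $(\chi_n-c^2)^{1/4}>\sqrt{2k_0}$, whence it suffices to prove the purely algebraic inequality $\frac{(4k_0-4)(4k_0-6)}{(4k_0-1)\sqrt{4k_0-3}}\le 2\sqrt{k_0}$ for integers $k_0\ge 3$; squaring reduces it to a polynomial comparison that I would verify directly. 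This is precisely where the constant $7/2$ in \eqref{eq_zeta_n_c} is pinned down, and the inequality is asymptotically tight (both sides grow like $2\sqrt{k_0}$), so the main obstacle is bookkeeping the constants tightly enough that nothing is lost — all the analytic heavy lifting having already been carried out in Theorems~\ref{thm_product} and \ref{thm_integral}.
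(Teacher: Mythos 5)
Your proposal is correct and follows essentially the same route as the paper's own proof: telescoping $\gamma_{k_0}=\gamma_2\prod r_k$, bounding $r_k>\sigma_k$, applying \eqref{eq_sigma_prod} and \eqref{eq_integral_fme}, estimating $g_n(0)<4\chi_n/c^2-2$, and absorbing the $k_0$-prefactor via $\frac{(4k_0-4)(4k_0-6)}{(4k_0-1)\sqrt{4k_0-3}}<\sqrt{4k_0}<\sqrt{2}\,(\chi_n-c^2)^{1/4}$, exactly as in \eqref{eq_laint_1}--\eqref{eq_laint_6}. The constants you track ($\tfrac{2\cdot 7\sqrt2}{8}=\tfrac{7}{2}\cdot\tfrac{1}{\sqrt2}\cdot\sqrt2$, etc.) reproduce the paper's $\tfrac{7}{2}$ correctly, and your proposed polynomial verification of the $k_0$-inequality is a valid (indeed slightly more explicit) justification of the step the paper states without detail in \eqref{eq_laint_4}.
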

\begin{proof}
We start with observing that, due to \eqref{eq_k0_def}
in Theorem~\ref{thm_monotone} and
\eqref{eq_k0_ineq} in Theorem~\ref{thm_product},
the inequality $\chi_n>c^2+42$ implies that $k_0>2$.
We combine \eqref{eq_gamma_def} in Theorem~\ref{thm_gamma},
\eqref{eq_r_def}, \eqref{eq_sigma_def} in Theorem~\ref{thm_r_sigma}
and \eqref{eq_r_sigma_ineq} in Theorem~\ref{thm_r_sigma_ineq},
to obtain the inequality
\begin{align}
\gamma_{k_0} & \; =
\gamma_2 \cdot \frac{\gamma_3}{\gamma_2} \cdot \dots \cdot
               \frac{\gamma_{k_0-1}}{\gamma_{k_0-2}} \cdot
               \frac{\gamma_{k_0}}{\gamma_{k_0-1}} 
 = \gamma_2 \cdot r_2 \cdot \dots \cdot r_{k_0-2} \cdot r_{k_0-1}
> \gamma_2 \cdot \left( \sigma_2 \cdot \dots \cdot \sigma_{k_0-1} \right).
\label{eq_laint_1}
\end{align}
Next, we substitute \eqref{eq_gn_def}, \eqref{eq_sigma_prod} in
Theorem~\ref{thm_product} into \eqref{eq_laint_1}
to obtain the inequality
\begin{align}
\gamma_{k_0} & \; >
\gamma_2 \cdot \left(g_n(0)\right)^{-4} \cdot
\exp \int_0^{\left(\sqrt{\chi_n - c^2}\right)/2}
     \log \left( g_n(x) \right) \; dx 
\nonumber \\
& \; > \gamma_2 \cdot \left(2 + 4 \cdot \frac{\chi_n-c^2}{c^2}\right)^{-4}
 \cdot
\exp \int_0^{\left(\sqrt{\chi_n - c^2}\right)/2}
     \log \left( g_n(x) \right) \; dx,
\label{eq_laint_2}
\end{align}
where the function $g_n$ is defined via \eqref{eq_gn_def}. Then,
we plug the identity \eqref{eq_integral} from Theorem~\ref{thm_integral}
into \eqref{eq_laint_2} to obtain the inequality
\begin{align}
\frac{1}{\gamma_{k_0}} < & \;  
\frac{1}{\gamma_2} \cdot
\left(2 + 4 \cdot \frac{\chi_n-c^2}{c^2}\right)^{4} \cdot  \exp\left[
-\frac{\chi_n-c^2}{c} \cdot
\int_0^{\pi/2} 
\frac{ \sin^2 (\theta) \; d\theta }
     { \sqrt{1 + \frac{\chi_n-c^2}{c^2} \cdot \cos^2(\theta) } }\right].
\label{eq_laint_3}
\end{align}
We use \eqref{eq_k0_def} in Theorem~\ref{thm_monotone}
and \eqref{eq_k0_ineq} in Theorem~\ref{thm_product}
to conclude that
\begin{align}
\frac{(4k_0-4)\cdot(4k_0-6)}{(4k_0-1)\cdot\sqrt{4k_0-3}} <
\sqrt{4k_0} < \sqrt{2} \cdot \left(\chi_n-c^2\right)^{\frac{1}{4}}.
\label{eq_laint_4}
\end{align}
We substitute \eqref{eq_laint_4} into \eqref{eq_lambda_gamma}
in Theorem~\ref{thm_lambda_gamma} to obtain
\begin{align}
| \lambda_n | < 
\frac{2}{|\psi_n(0)|} \cdot
\sqrt{2} \cdot \left(\chi_n-c^2\right)^{\frac{1}{4}} \cdot
\frac{1}{\gamma_{k_0}}.
\label{eq_laint_5}
\end{align}
Finally, we combine \eqref{eq_gamma2} in Theorem~\ref{thm_gamma} with
\eqref{eq_laint_3}, \eqref{eq_laint_5} to obtain
\begin{align}
| \lambda_n | < & \;
\frac{7}{2|\psi_n(0)|} \cdot
\left(\chi_n-c^2\right)^{\frac{1}{4}} \cdot
\left(2 + 3 \cdot \frac{\chi_n-c^2}{c^2} \right)^{-1} \cdot
\left(2 + 4 \cdot \frac{\chi_n-c^2}{c^2}\right)^{4} \cdot \nonumber \\
& \; \exp\left[ -
\frac{\chi_n-c^2}{c} \cdot
\int_0^{\pi/2} 
\frac{ \sin^2 (\theta) \; d\theta }
     { \sqrt{1 + \frac{\chi_n-c^2}{c^2} \cdot \cos^2(\theta) } } \right].
\label{eq_laint_6}
\end{align}
Eventually, we combine \eqref{eq_integral_fme} in Theorem~\ref{thm_integral}
with \eqref{eq_laint_6} to conclude \eqref{eq_big_inequality}.
\end{proof}
\begin{remark}
The assumptions of 
Theorem~\ref{thm_big_inequality} are satisfied if $n$ is
an even integer such that
\begin{align}
n > \frac{2c}{\pi} + \sqrt{42},
\end{align}
since, in this case, $\chi_n > c^2 + 42$ due to Theorem~\ref{thm_khi_2}.
\label{rmk_42}
\end{remark}
\subsection{Weaker But Simpler Bounds}
\label{sec_weaker}
In this subsection, we use Theorem~\ref{thm_big_inequality}
in Section~\ref{sec_principal} to derive several
upper bounds on $|\lambda_n|$. While these bounds
are weaker than $\zeta(n,c)$ defined via \eqref{eq_zeta_n_c},
they have a simpler form, and contribute to a better
understanding of the decay of $|\lambda_n|$.
The principal results of this subsection are
Theorems~\ref{thm_simple_inequality}, \ref{thm_crude_inequality}.

In the following theorem, we simplify the inequality 
\eqref{eq_big_inequality}.
The resulting upper bound on $|\lambda_n|$
is weaker than \eqref{eq_big_inequality}
in Theorem~\ref{thm_big_inequality}, but has a simpler form.
\begin{thm}
Suppose that $n>0$ is an even integer number, 
and that $\lambda_n$ is the $n$th eigenvalue of the
integral operator $F_c$,
defined via \eqref{eq_pswf_fc}, \eqref{eq_prolate_integral}
in Section~\ref{sec_pswf}. 
Suppose also that $\chi_n>c^2+42$. Suppose furthermore that
the real number $\eta(n,c)$ is defined via the formula
\begin{align}
\eta(n,c) = & \;
18 \cdot (n+1) \cdot \left( \frac{\pi \cdot (n+1)}{c} \right)^7 \cdot
\nonumber \\
& \; \exp\left[
-\sqrt{\chi_n} \cdot \left(
F\left( \sqrt{\frac{\chi_n-c^2}{\chi_n}} \right) -
E\left( \sqrt{\frac{\chi_n-c^2}{\chi_n}} \right)
\right)
\right],
\label{eq_eta_n_c}
\end{align}
where $F,E$ are the complete elliptic integrals,
defined, respectively, via \eqref{eq_F}, \eqref{eq_E}
in Section~\ref{sec_elliptic}.
Then,
\begin{align}
| \lambda_n | < \eta(n,c).
\label{eq_simple_inequality}
\end{align}
\label{thm_simple_inequality}
\end{thm}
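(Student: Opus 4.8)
The plan is to deduce Theorem~\ref{thm_simple_inequality} directly from Theorem~\ref{thm_big_inequality} by exploiting the fact that the two bounds $\zeta(n,c)$ and $\eta(n,c)$ carry \emph{exactly the same} exponential factor $\exp[-\sqrt{\chi_n}\,(F(\cdots)-E(\cdots))]$. Consequently, the desired inequality $|\lambda_n| < \eta(n,c)$ follows from the already established $|\lambda_n| < \zeta(n,c)$ as soon as the algebraic prefactor of $\zeta$ is shown to be dominated by that of $\eta$, i.e.
\begin{align}
\frac{7}{2|\psi_n(0)|} \cdot
\frac{(4\chi_n/c^2-2)^4}{3\chi_n/c^2-1} \cdot
(\chi_n-c^2)^{1/4}
\leq
18(n+1)\left(\frac{\pi(n+1)}{c}\right)^7.
\label{eq_plan_reduce}
\end{align}
All the remaining work therefore consists in bounding the left-hand side of \eqref{eq_plan_reduce}.

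To carry this out I would set $t = \chi_n/c^2$, noting $t>1$ since $\chi_n > c^2+42$. First I would eliminate the reciprocal $1/|\psi_n(0)|$ via Theorem~\ref{thm_psi0}, which gives $1/|\psi_n(0)| \leq 4\sqrt{n\,\chi_n/c^2} = 4\sqrt{n}\,\sqrt{\chi_n}/c$. Next, using only $t>1$, I would bound the rational factor crudely by $(4t-2)^4 < 256\,t^4$ together with $3t-1 > 2t$, so that $(4t-2)^4/(3t-1) < 128\,t^3 = 128\,\chi_n^3/c^6$, and I would use $(\chi_n-c^2)^{1/4} < \chi_n^{1/4}$. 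Collecting these estimates, the left-hand side of \eqref{eq_plan_reduce} is at most a constant times $\sqrt{n}\,\chi_n^{15/4}/c^7$; explicitly,
\begin{align}
\frac{7}{2|\psi_n(0)|}\cdot\frac{(4t-2)^4}{3t-1}\cdot(\chi_n-c^2)^{1/4}
< 1792\cdot\sqrt{n}\cdot\frac{\chi_n^{15/4}}{c^7}.
\label{eq_plan_collect}
\end{align}
The final ingredient is Theorem~\ref{thm_khi_n_square}, which trades the dependence on $\chi_n$ for a dependence on $n$: since $\chi_n < (\pi(n+1)/2)^2$, one has $\chi_n^{15/4} < \pi^{15/2}(n+1)^{15/2}/2^{15/2}$, and combining with $\sqrt{n}\,(n+1)^{15/2} \leq (n+1)^8$ collapses \eqref{eq_plan_collect} into a bound of the shape $C\,\pi^{15/2}(n+1)^8/c^7$.

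The one genuinely delicate point — the step I expect to be the main obstacle — is verifying that the resulting numerical constant stays below the target constant of $\eta$, namely $18\pi^7$. The power counting in $n$ and in $c$ is automatic and reproduces the exponents in \eqref{eq_plan_reduce} exactly, so the claim reduces to the inequality $1792\,\pi^{1/2}/2^{15/2} \leq 18$, i.e. roughly $17.55 \leq 18$, which holds only by a slim margin. This tightness forces some care in the crude estimates above: in particular one must keep the stronger bound $3t-1 > 2t$ (rather than the weaker $3t-1 > 2$), since the latter would raise the power of $\chi_n$ and spoil both the matching of the $c$-exponent and the final constant. Because the slack is only about $2.5\%$, no intermediate inequality can afford to be appreciably lossier than those indicated, and I would double-check each numerical constant before declaring \eqref{eq_plan_reduce}, hence \eqref{eq_simple_inequality}, proved.
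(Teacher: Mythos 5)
Your proposal is correct and follows essentially the same route as the paper: reduce to comparing the non-exponential prefactors via Theorem~\ref{thm_big_inequality}, then bound $1/|\psi_n(0)|$ by Theorem~\ref{thm_psi0}, the rational factor by $128(\chi_n/c^2)^3$, and $\chi_n$ by $(\pi(n+1)/2)^2$ via Theorem~\ref{thm_khi_n_square}, exactly as in the paper's proof. Your constant $1792\sqrt{\pi}/2^{15/2}=14\sqrt{\pi/2}\approx 17.55<18$ is the correct sharp value of this chain of estimates (the paper's final display drops a factor of $4$ in passing from $4\sqrt{\pi/2}\cdot 2^7$ to $\sqrt{\pi/2}$, but its stated constant $18$ remains valid, as your computation confirms).
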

\begin{proof}
We use \eqref{eq_khi_n_square_prop} in Theorem~\ref{thm_khi_n_square}
in Section~\ref{sec_pswf} to conclude that
\begin{align}
\left( \chi_n-c^2 \right)^{1/4} < (\chi_n)^{1/4} < 
\left( \frac{\pi}{2} \cdot (n+1) \right)^{1/2}.
\label{eq_simple_1}
\end{align}
Next,
\begin{align}
\left(2 + 3 \cdot \frac{\chi_n-c^2}{c^2} \right)^{-1} \cdot
\left(2 + 4 \cdot \frac{\chi_n-c^2}{c^2}\right)^{4} <
2^7 \cdot \left( \frac{\chi_n}{c^2} \right)^3 .
\label{eq_simple_2}
\end{align}
We combine Theorems~\ref{thm_khi_n_square}, \ref{thm_psi0}
in Section~\ref{sec_pswf}
with
\eqref{eq_simple_1}, \eqref{eq_simple_2} to conclude that
\begin{align}
& \frac{1}{|\psi_n(0)|} \cdot
\frac{ \left(4 \cdot \chi_n/c^2- 2 \right)^{4} }
     {       3 \cdot \chi_n/c^2 - 1            }  \cdot
\left(\chi_n-c^2\right)^{\frac{1}{4}} < \nonumber \\
& 4 \cdot \sqrt{n \cdot \frac{\chi_n}{c^2}} \cdot
\frac{ \left(4 \cdot \chi_n/c^2- 2 \right)^{4} }
     {       3 \cdot \chi_n/c^2 - 1            }  \cdot
\left(\chi_n-c^2\right)^{\frac{1}{4}} < \nonumber \\
& 4 \cdot (n+1)^{1/2} \cdot 2^7 \cdot \left( \frac{\chi_n}{c^2} \right)^{7/2}
 \cdot \left( \frac{\pi}{2} \cdot (n+1) \right)^{1/2} < \nonumber \\
& 4 \cdot \sqrt{\frac{\pi}{2}} \cdot 2^7 \cdot (n+1) \cdot 
\left( \frac{ \pi \cdot (n+1) }{ 2c } \right)^7 =
  \sqrt{\frac{\pi}{2}} \cdot (n+1) \cdot 
  \left( \frac{ \pi \cdot (n+1) }{ c } \right)^7.
\label{eq_simple_3}
\end{align}
We conclude by combining the inequality \eqref{eq_big_inequality}
in Theorem~\ref{thm_big_inequality} above with 
the inequality \eqref{eq_simple_3}.
\end{proof}
Both $\zeta(n,c)$ and $\eta(n,c)$, defined, respectively,
via \eqref{eq_zeta_n_c} in Theorem~\ref{thm_big_inequality}
and \eqref{eq_eta_n_c} in Theorem~\ref{thm_simple_inequality},
contain an exponential term (of the form $\exp\left[ \dots \right]$).
This term depends on band limit $c$ and prolate index $n$
through $\chi_n$, which somewhat obscures its behavior.
The following theorem eliminates this inconvenience.
\begin{thm}
Suppose that $n$ is a positive integer such that
$n > 2c/\pi$, and that the function
$f:[0,\infty) \to \Rc$ is defined via the formula
\begin{align}
f(x) = -1 + \int_0^{\pi/2} \sqrt{ x + \cos^2(\theta)} \; d\theta.
\label{eq_f_def}
\end{align}
Suppose also that the function $H: [0,\infty) \to \Rc$ is the inverse of $f$,
in other words, 
\begin{align}
y = f(H(y)) = 
-1 + \int_0^{\pi/2} \sqrt{ H(y) + \cos^2(\theta)} \; d\theta,
\label{eq_big_h_def}
\end{align}
for all $y \geq 0$. Suppose furthermore that the function 
$G:[0,\infty)\to\Rc$ is defined via the formula
\begin{align}
G(x) = 
\int_0^{\pi/2}
\frac{ \sin^2(\theta) \; d\theta }
     { \sqrt{1 + x \cdot \cos^2(\theta) } },
\label{eq_big_g_def}
\end{align}
for all $x \geq 0$.
Then,
\begin{align}
H\left( \frac{n\pi}{2c} - 1 \right) < 
\frac{\chi_n - c^2}{c^2} < 
H\left( \frac{n\pi}{2c} - 1 + \frac{3\pi}{2c} \right).
\label{eq_khi_via_h}
\end{align}
Moreover,
\begin{align}
& c \cdot H\left( \frac{n\pi}{2c} - 1 \right) \cdot
          G\left( H\left( \frac{n\pi}{2c} - 1 \right) \right)
< \sqrt{\chi_n} \cdot \left(
F\left( \sqrt{\frac{\chi_n-c^2}{\chi_n}} \right) -
E\left( \sqrt{\frac{\chi_n-c^2}{\chi_n}} \right)
\right),
\label{eq_gh}
\end{align}
where $F,E$ are the complete elliptic integrals,
defined, respectively, via \eqref{eq_F}, \eqref{eq_E}
in Section~\ref{sec_elliptic}.
\label{thm_exp_term}
\end{thm}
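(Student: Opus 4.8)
The plan is to reduce both assertions to results already proved in the excerpt, using nothing beyond elementary monotonicity of the two auxiliary functions $f$ and $x \mapsto xG(x)$. For \eqref{eq_khi_via_h} I would invoke Theorem~\ref{thm_n_khi_simple}, and for \eqref{eq_gh} the two identities of Theorem~\ref{thm_integral}. Throughout, I set $V^2 = (\chi_n-c^2)/c^2$; note that $n>2c/\pi$ forces $\chi_n>c^2$ by Theorem~\ref{thm_n_and_khi}, so $V^2>0$, and forces $\tfrac{n\pi}{2c}-1>0$, so the arguments of $H$ are nonnegative (recall $f(0)=0$, whence $H(0)=0$).

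First, for \eqref{eq_khi_via_h} I would start from the double inequality \eqref{eq_both_large_simple_prop}. Substituting $t=\sin\phi$ in its integral and using $\chi_n-c^2\sin^2\phi = c^2(V^2+\cos^2\phi)$, together with the definition \eqref{eq_f_def} of $f$, one obtains
\begin{align}
\frac{2}{\pi}\int_0^1 \sqrt{\frac{\chi_n-c^2t^2}{1-t^2}}\,dt
= \frac{2c}{\pi}\int_0^{\pi/2}\sqrt{V^2+\cos^2\phi}\;d\phi
= \frac{2c}{\pi}\bigl(f(V^2)+1\bigr).
\end{align}
Hence \eqref{eq_both_large_simple_prop} rewrites as $\tfrac{n\pi}{2c}-1 < f(V^2) < \tfrac{n\pi}{2c}-1+\tfrac{3\pi}{2c}$. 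Since $f'(x)=\tfrac12\int_0^{\pi/2}(x+\cos^2\theta)^{-1/2}\,d\theta>0$, the function $f$ is strictly increasing, so its inverse $H$ from \eqref{eq_big_h_def} is increasing; applying $H$ to the last chain of inequalities yields \eqref{eq_khi_via_h}.

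Second, for \eqref{eq_gh} I would combine the identities \eqref{eq_integral} and \eqref{eq_integral_fme} of Theorem~\ref{thm_integral}. Comparing them and reading off the definition \eqref{eq_big_g_def} of $G$ with argument $V^2$, one finds that the right-hand side of \eqref{eq_gh} equals exactly
\begin{align}
\sqrt{\chi_n}\left(
F\left(\sqrt{\tfrac{\chi_n-c^2}{\chi_n}}\right)-
E\left(\sqrt{\tfrac{\chi_n-c^2}{\chi_n}}\right)\right)
= c\,V^2\,G(V^2).
\end{align}
The left-hand side of \eqref{eq_gh} is $c\,H_-\,G(H_-)$, where $H_- = H(\tfrac{n\pi}{2c}-1)$. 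By the already proved \eqref{eq_khi_via_h} we have $H_- < V^2$, so it suffices to show that $x\mapsto xG(x)$ is strictly increasing.

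This last monotonicity is the only genuine computation, and is where the competing effects of the growing prefactor $x$ and the decaying $G(x)$ must be resolved. Differentiating under the integral sign gives
\begin{align}
\frac{d}{dx}\frac{x}{\sqrt{1+x\cos^2\theta}}
= \frac{1+\tfrac12\,x\cos^2\theta}{(1+x\cos^2\theta)^{3/2}} > 0,
\end{align}
so $\tfrac{d}{dx}\bigl(xG(x)\bigr) = \int_0^{\pi/2}\sin^2\theta\cdot\frac{1+\tfrac12 x\cos^2\theta}{(1+x\cos^2\theta)^{3/2}}\,d\theta > 0$. Therefore $H_- < V^2$ gives $H_-\,G(H_-) < V^2\,G(V^2)$, and multiplying by $c$ yields \eqref{eq_gh}. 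I do not expect a serious obstacle: every step is a change of variables, a citation of an earlier theorem, or a one-line positivity argument, and the only point requiring care is the sign of the derivative displayed above, which secures the monotonicity of $xG(x)$.
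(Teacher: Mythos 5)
Your proposal is correct and follows essentially the same route as the paper: reduce \eqref{eq_khi_via_h} to Theorem~\ref{thm_n_khi_simple} via the substitution $t=\sin\phi$ and the monotonicity of $H$, then obtain \eqref{eq_gh} from the identity \eqref{eq_int_5} together with a monotonicity argument applied to the strict inequality $H(\tfrac{n\pi}{2c}-1)<(\chi_n-c^2)/c^2$. The only (cosmetic) difference is that you prove the needed monotonicity directly for $x\mapsto xG(x)$ by differentiating under the integral sign, whereas the paper observes the equivalent fact that $\chi\mapsto\sqrt{\chi}\,(F-E)$ is increasing because $F$ is increasing and $E$ is decreasing in their modulus.
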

\begin{proof}
Obviously, the function $f$, defined via \eqref{eq_f_def},
is monotonically increasing. Moreover, $f(0) = 0$, and
\begin{align}
\lim_{x \to \infty} f(x) = \infty.
\end{align}
Therefore, $H(y)$ 
is well defined for all $y \geq 0$, and, moreover,
the function $H$
is monotonically increasing. This observation,
combined with Theorems~\ref{thm_n_and_khi},
\ref{thm_n_khi_simple} in Section~\ref{sec_pswf},
implies
the inequality \eqref{eq_khi_via_h}.

Next, the right hand side of \eqref{eq_gh} increases with $\chi_n$,
due to the combination of \eqref{eq_F}, \eqref{eq_E}
in Section~\ref{sec_elliptic}. This observation,
combined with \eqref{eq_int_5} in the proof of Theorem~\ref{thm_integral},
\eqref{eq_big_g_def} and \eqref{eq_khi_via_h},
implies \eqref{eq_gh}.
\end{proof}
\begin{remark}
The functions $H,G$, defined, respectively,
via \eqref{eq_big_h_def}, \eqref{eq_big_g_def} above,
do not depend on either of $n, c, \chi_n$. Therefore,
while the right-hand side of \eqref{eq_gh} does depend on $\chi_n$,
its left-hand side depends solely on $c$ and $n$.
\end{remark}
In the following theorem, we provide simple lower and upper
bounds on $H$, defined via \eqref{eq_big_h_def}
in Theorem~\ref{thm_exp_term}.
\begin{thm}
Suppose that the function $H:[0,\infty) \to \Rc$ is defined
via \eqref{eq_big_h_def} in Theorem~\ref{thm_exp_term}.
Then,
\begin{align}
s \leq H\left( \frac{s}{4} \cdot \log \frac{16e}{s} \right)
  \leq s + \frac{s^2}{5},
\label{eq_h_bounds}
\end{align}
for all real $0 \leq s \leq 5$.
\label{thm_big_h}
\end{thm}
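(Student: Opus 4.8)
The plan is to convert \eqref{eq_h_bounds} into two inequalities for $f$ itself. Since $f$ is strictly increasing with $f(0)=0$ (shown in the proof of Theorem~\ref{thm_exp_term}) and $H=f^{-1}$, applying the increasing map $f$ to \eqref{eq_h_bounds} shows the claim is equivalent to
\[
f(s)\;\le\;\frac{s}{4}\log\frac{16e}{s}\;\le\;f\!\left(s+\frac{s^2}{5}\right),\qquad 0\le s\le 5.
\]
So it suffices to prove the upper bound (call it (A)) $f(s)\le\frac{s}{4}\log\frac{16e}{s}$ and the lower bound (B) $f(s+s^2/5)\ge\frac{s}{4}\log\frac{16e}{s}$.

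Next I would put $f$ into a computable form. Using $\int_0^{\pi/2}\cos\theta\,d\theta=1$ gives the manifestly nonnegative representation $f(x)=\int_0^{\pi/2}(\sqrt{x+\cos^2\theta}-\cos\theta)\,d\theta$, and the substitution $u=\cos\theta$ turns this into $f(x)=\int_0^1\frac{\sqrt{x+u^2}-u}{\sqrt{1-u^2}}\,du$. Dropping the factor $(1-u^2)^{-1/2}\ge 1$ yields the closed-form lower bound $f(x)\ge L(x):=\frac12(\sqrt{x+1}-1)+\frac{x}{4}\log\frac{(1+\sqrt{x+1})^2}{x}$. Alternatively, completing to $\sqrt{(x+1)-\sin^2\theta}$ identifies $f(x)=\sqrt{x+1}\,E\!\big(1/\sqrt{x+1}\big)-1$ and, by differentiation, $f'(x)=\frac{1}{2\sqrt{x+1}}F\!\big(1/\sqrt{x+1}\big)$, with $F,E$ as in \eqref{eq_F}, \eqref{eq_E}. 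Feeding the expansion \eqref{eq_E_exp} into the first identity one checks that $f(x)=\frac{x}{4}\log\frac{16e}{x}+O(x^2\log x)$ as $x\to0$; in particular (A) is an asymptotic equality in the limit $s\to0$, which is exactly why the constant must be $16e$ and why only the sign of the second-order correction is at stake.

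For (B) I would combine $f\ge L$ with the explicit gain produced by the shift $s\mapsto s+s^2/5$: since $L$ is available in closed form, (B) reduces to an elementary inequality in $s$ verifiable on $[0,5]$ (the increment $s^2/5$, weighed against $f'\approx\frac14\log\frac{16}{s}$, more than compensates the negative correction). For (A) I would study $\phi(s)=\frac{s}{4}\log\frac{16e}{s}-f(s)$, with $\phi(0)=0$ and $\phi'(s)=\frac14\log\frac{16}{s}-f'(s)$; inserting $f'(x)=\frac{1}{2\sqrt{x+1}}F\!\big(1/\sqrt{x+1}\big)$ together with a sufficiently sharp bound on the logarithmic growth of $F$ near modulus $1$ (or, equivalently, the signed second-order term of \eqref{eq_E_exp} fed directly into $f$), one shows that $\phi'$ keeps the sign that forces $\phi\ge 0$ throughout $[0,5]$.

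The main obstacle is that (A) is tight at \emph{both} ends: $f(s)\sim\frac{s}{4}\log\frac{16e}{s}$ as $s\to0$, while at the other endpoint $f(5)\approx 2.68$ sits just below $\frac54\log\frac{16e}{5}\approx 2.70$. With so small a margin, every crude estimate overshoots — bounding $(1-u^2)^{-1/2}$ by a constant, or using $L$ alone, already exceeds the right-hand side near $s=5$. The delicate part is therefore a \emph{non-asymptotic, second-order} control of $f$ that is uniform on the entire interval $[0,5]$: one needs the remainder in \eqref{eq_E_exp} (equivalently, the singularity of $F$ that drives $f'$) as an explicit signed bound rather than an $O(\cdot)$, so that the sign of the $O(s^2\log s)$ correction can actually be pinned down precisely where the two sides nearly coincide.
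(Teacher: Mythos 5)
First, for context: the paper does not actually prove this theorem --- its ``proof'' states that the argument is elementary, is based on \eqref{eq_E_exp}, and is omitted, with correctness ``validated numerically.'' Your framework (reduce \eqref{eq_h_bounds} to two inequalities for $f$ via monotonicity of $f=H^{-1}$, identify $f(x)=\sqrt{x+1}\,E\bigl(1/\sqrt{x+1}\bigr)-1$, and observe that $f(s)=\tfrac{s}{4}\log\tfrac{16e}{s}+O(s^2\log s)$, which explains the constant $16e$) is correct and is exactly the route the paper gestures at. Your diagnosis that the whole difficulty is a uniform, \emph{signed} second-order remainder --- which \eqref{eq_E_exp} with its $O(\cdot)$ term does not provide --- is also accurate.

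However, two of your concrete proposed mechanisms fail. For (B), the bound $f\ge L$ obtained by discarding the weight $(1-u^2)^{-1/2}\ge 1$ is lossy at \emph{first} order: expanding, $L(s)=\tfrac{s}{4}\log\tfrac{4e}{s}+O(s^2)$, so $L$ undershoots the target $\tfrac{s}{4}\log\tfrac{16e}{s}$ by $\tfrac{s}{4}\log 4\approx 0.347\,s$. The shift $s\mapsto s+s^2/5$ only gains $O\bigl(s^2\log(1/s)\bigr)$, so $L\bigl(s+s^2/5\bigr)<\tfrac{s}{4}\log\tfrac{16e}{s}$ for small $s$ (already at $s=0.1$: $L(0.102)\approx 0.119$ versus a target of $\approx 0.152$). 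The logarithmic singularity of $f$ at $0$ comes from $u\approx 0$, where the discarded weight is $\approx 1$, so the $\log(1/x)$ coefficient survives but the additive constant inside the logarithm degrades from $16e$ to $4e$; any usable lower bound must retain enough of the weight to recover that constant. For (A), the sign argument also breaks: $\phi(s)=\tfrac{s}{4}\log\tfrac{16e}{s}-f(s)$ satisfies $\phi'(s)=\tfrac14\log\tfrac{16}{s}-\tfrac{1}{2\sqrt{s+1}}F\bigl(1/\sqrt{s+1}\bigr)$, and a direct evaluation gives $\phi'(5)\approx 0.291-0.335<0$, so $\phi'$ changes sign on $[0,5]$; $\phi$ rises and then falls, landing at $\phi(5)\approx 0.022$. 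One must therefore control $\phi$ at the right endpoint separately rather than by monotonicity from $\phi(0)=0$. Combined with the unsupplied explicit remainder for $E$, the proposal does not close --- though, to be fair, neither does the paper's.
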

\begin{proof}
The proof of \eqref{eq_h_bounds} is straightforward,
elementary, and is based on
\eqref{eq_E_exp} in Section~\ref{sec_elliptic};
it will be omitted.
The correctness of Theorem~\ref{thm_big_h} has been
validated numerically.
\end{proof}
%
%
\begin{remark}
The relative error of the lower bound in \eqref{eq_h_bounds}
is below 0.07 for all $0 \leq s \leq 5$; moreover, this 
error grows roughly linearly with $s$
to $\approx 0.0085$ for all $0 \leq s \leq 0.1$.
The relative error of the upper bound in \eqref{eq_h_bounds}
grows roughly linearly with $s$ to 1, for all $0 \leq s \leq 5$.
\label{rem_h_bounds}
\end{remark}
In the following theorem, we provide simple lower and upper bound
on $G$, defined via \eqref{eq_big_g_def} in Theorem~\ref{thm_exp_term}.
\begin{thm}
Suppose that the function $G:[0,\infty) \to \Rc$ is defined
via \eqref{eq_big_g_def} in Theorem~\ref{thm_exp_term}.
Then,
\begin{align}
\frac{\pi}{4} \cdot \left(1 - \frac{x}{8}\right) \leq 
G(x) \leq
\frac{\pi}{4},
\label{eq_g_bounds}
\end{align}
for all real $0 \leq x \leq 5$.
\label{thm_big_g}
\end{thm}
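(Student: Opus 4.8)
The plan is to prove the two inequalities in \eqref{eq_g_bounds} separately, in both cases by a pointwise estimate of the integrand in \eqref{eq_big_g_def} followed by integration. For the upper bound, I observe that for each fixed $\theta \in [0,\pi/2]$ the denominator $\sqrt{1 + x\cos^2(\theta)}$ is nondecreasing in $x \geq 0$ and equals $1$ at $x = 0$; hence the integrand $\sin^2(\theta)/\sqrt{1 + x\cos^2(\theta)}$ is bounded above by $\sin^2(\theta)$. Integrating this bound and using the elementary value $\int_0^{\pi/2}\sin^2(\theta)\,d\theta = \pi/4$ yields $G(x) \leq \pi/4$ at once, in fact for every $x \geq 0$.

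For the lower bound, the key ingredient is the elementary inequality
\[
\frac{1}{\sqrt{1+u}} \geq 1 - \frac{u}{2}, \qquad u \geq 0,
\]
which follows from the convexity of $u \mapsto (1+u)^{-1/2}$ on $(-1,\infty)$: the right-hand side is precisely its tangent line at $u = 0$, so the graph lies above it. (Alternatively, one squares both sides for $0 \leq u \leq 2$, and notes the inequality is trivial for $u > 2$, where the right-hand side is negative.) I would apply this with $u = x\cos^2(\theta) \geq 0$, obtaining the pointwise lower bound $\sin^2(\theta)/\sqrt{1 + x\cos^2(\theta)} \geq \sin^2(\theta) - (x/2)\sin^2(\theta)\cos^2(\theta)$.

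Integrating this over $[0,\pi/2]$ then reduces the whole matter to two standard integrals, $\int_0^{\pi/2}\sin^2(\theta)\,d\theta = \pi/4$ and $\int_0^{\pi/2}\sin^2(\theta)\cos^2(\theta)\,d\theta = \pi/16$ (the latter via $\sin^2(\theta)\cos^2(\theta) = \tfrac14\sin^2(2\theta)$). Combining them gives $G(x) \geq \pi/4 - (x/2)(\pi/16) = (\pi/4)(1 - x/8)$, which is exactly the claimed lower bound. I note that both estimates in fact hold for all $x \geq 0$; the restriction $0 \leq x \leq 5$ in the statement is immaterial for correctness (for $x > 8$ the lower bound is vacuous, becoming negative while $G$ stays positive) and is presumably retained only to match the regime in which the companion bound of Theorem~\ref{thm_big_h} is invoked. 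There is no genuine obstacle here: the single point requiring a moment's care is verifying the tangent-line inequality over the full range $0 \leq u \leq 5$ of values taken by $x\cos^2(\theta)$, which the convexity argument settles uniformly.
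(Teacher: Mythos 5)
Your proof is correct, and it is in fact more complete than what the paper provides. The paper's ``proof'' of Theorem~\ref{thm_big_g} merely asserts that the result is elementary, cites the local expansion $G(x) = \tfrac{\pi}{4}\bigl(1 - \tfrac{x}{8} + \tfrac{3x^2}{64} + O(x^3)\bigr)$, and states that the claim was validated numerically; no rigorous argument covering the whole interval $0 \leq x \leq 5$ is given. Your route replaces this with a genuine proof: the upper bound via the trivial pointwise estimate $(1+x\cos^2\theta)^{-1/2} \leq 1$, and the lower bound via the tangent-line inequality $(1+u)^{-1/2} \geq 1 - u/2$ for $u \geq 0$ (justified by convexity of $u \mapsto (1+u)^{-1/2}$), followed by the standard integrals $\int_0^{\pi/2}\sin^2\theta\,d\theta = \pi/4$ and $\int_0^{\pi/2}\sin^2\theta\cos^2\theta\,d\theta = \pi/16$. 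All steps check out, and your observation that both bounds hold for every $x \geq 0$ (with the lower bound becoming vacuous past $x=8$) is accurate; the restriction to $[0,5]$ is indeed only there to match the range needed in Theorem~\ref{thm_big_hg} and Theorem~\ref{thm_simple_exp}. What your approach buys is an unconditional, self-contained verification where the paper relies on a truncated Taylor expansion plus numerics; the paper's expansion does convey the sharper second-order information ($+\tfrac{3x^2}{64}$) that your one-sided tangent bound discards, but that information is not needed for the stated inequality.
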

\begin{proof}
The proof of \eqref{eq_g_bounds} is elementary, and is based
on the fact that, for all $x>0$,
\begin{align}
G(x) = \frac{\pi}{4} \cdot \left(1 - \frac{x}{8} + \frac{3x^2}{64} + O(x^3)
                           \right),
\end{align}
where $G$ is defined via \eqref{eq_big_g_def} in Theorem~\ref{thm_exp_term}.
The correctness of Theorem~\ref{thm_big_g} has been
validated numerically.
\end{proof}
%
%
\begin{remark}
The relative errors of both lower and upper bounds in \eqref{eq_g_bounds}
are below 0.6 for all $0 \leq x \leq 5$; moreover, these
errors are below 0.01 for all $0 \leq x \leq 0.1$, and grow
roughly linearly with $x$ in this interval.
\label{rem_g_bounds}
\end{remark}
The following theorem is in the spirit of Theorems~\ref{thm_big_h},
\ref{thm_big_g}.
\begin{thm}
Suppose that the functions $H,G:[0,\infty) \to \Rc$ are defined,
respectively,
via \eqref{eq_big_h_def}, \eqref{eq_big_g_def} in Theorem~\ref{thm_exp_term}.
Then,
\begin{align}
\frac{\pi}{4} \cdot s \cdot \left(1 - \frac{s}{8}\right) \leq 
H\left( \frac{s}{4} \cdot \log \frac{16e}{s} \right) \cdot
G\left( H\left( \frac{s}{4} \cdot \log \frac{16e}{s} \right) \right) \leq
\frac{\pi}{4} \cdot s,
\label{eq_hg_bounds}
\end{align}
for all real $0 \leq s \leq 5$. Moreover, the function
$x \to H(x) \cdot G(H(x))$ is monotonically increasing.
\label{thm_big_hg}
\end{thm}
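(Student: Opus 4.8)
The plan is to reduce both assertions of the theorem to the behavior of the single-variable function $\phi(w) = w \cdot G(w)$, where $G$ is defined via \eqref{eq_big_g_def}. The monotonicity of $x \mapsto H(x) \cdot G(H(x))$ will follow from the monotonicity of $\phi$, and the two-sided bound \eqref{eq_hg_bounds} will follow by composing the estimates of Theorems~\ref{thm_big_h} and \ref{thm_big_g} through $\phi$.

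First I would dispose of the monotonicity claim. Since $H$ is already known to be monotonically increasing (established in the proof of Theorem~\ref{thm_exp_term}), the composition $x \mapsto H(x) \cdot G(H(x)) = \phi(H(x))$ is increasing as soon as $\phi$ is. Writing
\begin{align}
\phi(w) = w \cdot G(w) = \int_0^{\pi/2} \frac{ w \cdot \sin^2(\theta) }{ \sqrt{ 1 + w \cos^2(\theta) } } \; d\theta
\end{align}
and differentiating under the integral sign, I would compute
\begin{align}
\frac{\partial}{\partial w} \left[ \frac{ w }{ \sqrt{ 1 + w \cos^2(\theta) } } \right] = \frac{ 1 + \frac{w}{2} \cos^2(\theta) }{ \left( 1 + w \cos^2(\theta) \right)^{3/2} } > 0,
\end{align}
so that $\phi'(w) > 0$ for all $w \geq 0$. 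This settles the monotonicity statement.

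For the two-sided bound I would abbreviate $x_s = \frac{s}{4} \log \frac{16e}{s}$ and $w = H(x_s)$, so the quantity to estimate is exactly $\phi(w)$. The lower bound is then immediate: Theorem~\ref{thm_big_h} gives $w \geq s$, the monotonicity of $\phi$ yields $\phi(w) \geq \phi(s) = s \cdot G(s)$, and the lower bound $G(s) \geq \frac{\pi}{4}(1 - s/8)$ from \eqref{eq_g_bounds} in Theorem~\ref{thm_big_g} produces the left-hand inequality of \eqref{eq_hg_bounds}. The right-hand inequality is the crux, and I expect it to be the main obstacle. The naive route — inserting $w \leq s + s^2/5$ from Theorem~\ref{thm_big_h} and $G \leq \pi/4$ from Theorem~\ref{thm_big_g} into $\phi(w) \leq w \cdot \frac{\pi}{4}$ — is \emph{too weak}, since it yields $\frac{\pi}{4}(s + s^2/5)$, which exceeds $\frac{\pi}{4}s$. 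The difficulty is that the overshoot of $H$ above $s$ and the shortfall of $G$ below $\pi/4$ are of the same quadratic order, and the upper bound holds only because the decay of $G$ slightly outweighs the growth of $H$: since $G(x) = \frac{\pi}{4}(1 - x/8 + O(x^2))$ one has $\phi(w) = \frac{\pi}{4}(w - w^2/8 + O(w^3))$, so $\phi(w) \leq \frac{\pi}{4}s$ in fact forces $w \leq s + \frac{s^2}{8} + o(s^2)$, an estimate strictly sharper than the $s + s^2/5$ furnished by Theorem~\ref{thm_big_h}.

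The resolution I would pursue is to sharpen the upper bound on $w = H(x_s)$ to $w \leq s + \frac{s^2}{8} + o(s^2)$ by carrying the elliptic expansion \eqref{eq_E_exp} one order further in the representation $f(w) + 1 = \sqrt{1+w}\,E\!\left(1/\sqrt{1+w}\right)$ (obtained from \eqref{eq_f_def} by substituting $\cos^2\theta = 1 - \sin^2\theta$), and then to feed this sharpened estimate, together with the matching second-order expansion of $G$, into $\phi(w)$; the quadratic terms then cancel and leave $\phi(w) \leq \frac{\pi}{4}s$. The coefficient $\tfrac18$ is genuinely critical here — it is approached but not reached as $s \to 0$ — which is precisely why the loose bound of Theorem~\ref{thm_big_h} does not suffice. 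As with Theorems~\ref{thm_big_h} and \ref{thm_big_g}, the remaining work is elementary but tedious, amounting to controlling the $O(s^2 \log s)$ error terms in both the inversion of $f$ and the expansion of $G$ uniformly on $0 \leq s \leq 5$; consistently with the treatment of those two theorems, I would verify the final inequalities numerically over the stated range rather than reproduce the full algebra.
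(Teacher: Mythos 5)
The paper offers no proof to compare against: its ``proof'' of Theorem~\ref{thm_big_hg} is a single sentence declaring the result elementary, ``based on'' Theorems~\ref{thm_big_h} and~\ref{thm_big_g}, and numerically validated. Your write-up is therefore strictly more informative than the paper's, and the two parts of it that are complete are correct. The monotonicity claim does reduce to showing that $\phi(w)=w\,G(w)$ is increasing, your differentiation under the integral sign (the integrand's $w$-derivative is $\bigl(1+\tfrac{w}{2}\cos^2\theta\bigr)\big/\bigl(1+w\cos^2\theta\bigr)^{3/2}>0$) is right, and composing with the increasing function $H$ finishes it. The lower bound in \eqref{eq_hg_bounds} likewise follows cleanly from $H(x_s)\geq s$, the monotonicity of $\phi$, and the left inequality of \eqref{eq_g_bounds}.

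Your diagnosis of the upper bound is also correct, and it is the most valuable observation in the proposal: the stated conclusions of Theorems~\ref{thm_big_h} and~\ref{thm_big_g} genuinely cannot imply the right-hand inequality of \eqref{eq_hg_bounds}. One can make this concrete: since $\phi$ is increasing and, by \eqref{eq_fme}, $\phi(w)=\sqrt{1+w}\,\bigl(F(k)-E(k)\bigr)$ with $k^2=w/(1+w)$, one computes $\phi(1.2)=\sqrt{2.2}\,\bigl(F(k)-E(k)\bigr)\approx 0.841>\pi/4\approx 0.785$, while Theorem~\ref{thm_big_h} localizes $H(x_1)$ only to $[1,\,1.2]$; so no argument using those bounds alone can give $\phi(H(x_1))\leq\pi/4$ (the inequality is saved only because $H(x_1)\approx 1.05$). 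Your quantitative account of why --- the quadratic overshoot of $H$ must be dominated by the quadratic decay of $G$, forcing $H(x_s)\leq s+s^2/8+o(s^2)$ rather than the $s+s^2/5$ of \eqref{eq_h_bounds} --- is accurate, and numerically the true coefficient is about $0.085<1/8$, so the repair you outline is feasible. The one caveat is that this part of your argument remains a plan rather than a proof: the sharpened bound on $H$ and the uniform control of the error terms over all of $[0,5]$ are asserted and then delegated to numerical verification. That is, however, precisely the standard of rigor the paper itself applies throughout Theorems~\ref{thm_big_h}--\ref{thm_big_hg}, so your proposal should be regarded as a correct and strictly more honest version of the omitted argument.
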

\begin{proof}
The proof is based on Theorems~\ref{thm_big_h}, \ref{thm_big_g},
is elementary, and will be omitted.
The correctness of Theorem~\ref{thm_big_hg} has been
validated numerically.
\end{proof}
\begin{remark}
The relative errors of both lower and upper bounds in \eqref{eq_hg_bounds}
are below 0.5 for all $0 \leq s \leq 5$. Moreover, these errors
are below 0.01 for all $0 \leq s \leq 0.1$, and grow
roughly linearly with $s$ in this interval.
\label{rem_hg_bounds}
\end{remark}
The following theorem is a consequence of 
Theorems~\ref{thm_exp_term} - \ref{thm_big_hg}.
\begin{thm}
Suppose that $\delta > 0$ is a real number,
such that
\begin{align}
0 < \delta < \frac{5 \pi}{4} \cdot c.
\label{eq_delta}
\end{align}
Suppose also that $n$ is a positive integer, such that
\begin{align}
n > \frac{2}{\pi} c + \frac{2}{\pi^2} \cdot \delta \cdot
    \log\left( \frac{4e\pi c}{\delta} \right).
\label{eq_n_good}
\end{align}
Then,
\begin{align}
\delta \cdot  
\left(1 - \frac{\delta}{2\pi c}\right) < 
\sqrt{\chi_n} \cdot \left(
F\left( \sqrt{\frac{\chi_n-c^2}{\chi_n}} \right) -
E\left( \sqrt{\frac{\chi_n-c^2}{\chi_n}} \right)
\right),
\label{eq_gh_simple}
\end{align}
where $F,E$ are the complete elliptic integrals,
defined, respectively, via \eqref{eq_F}, \eqref{eq_E}
in Section~\ref{sec_elliptic}.
\label{thm_simple_exp}
\end{thm}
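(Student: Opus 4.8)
The plan is to reduce \eqref{eq_gh_simple} to the chain of inequalities already assembled in Theorems~\ref{thm_exp_term} and \ref{thm_big_hg} via a single well-chosen rescaling. By \eqref{eq_gh} in Theorem~\ref{thm_exp_term}, the right-hand side of \eqref{eq_gh_simple} is bounded below by
\[
c \cdot H\left( \frac{n\pi}{2c} - 1 \right) \cdot
          G\left( H\left( \frac{n\pi}{2c} - 1 \right) \right),
\]
so it suffices to bound this quantity below by $\delta \cdot (1 - \delta/(2\pi c))$. The hypotheses of Theorem~\ref{thm_exp_term} are met, since \eqref{eq_n_good} forces $n > 2c/\pi$.

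First I would introduce the auxiliary variable $s = 4\delta/(\pi c)$. The constraint \eqref{eq_delta} translates \emph{exactly} into $0 < s < 5$, placing $s$ in the range where Theorem~\ref{thm_big_hg} applies. The key algebraic observation is that this choice makes the threshold argument $\frac{s}{4}\log\frac{16e}{s}$ of Theorem~\ref{thm_big_hg} coincide with the logarithmic term of \eqref{eq_n_good}: a direct computation gives
\[
\frac{s}{4}\log\frac{16e}{s} = \frac{\delta}{\pi c}\log\frac{4e\pi c}{\delta},
\]
while multiplying \eqref{eq_n_good} through by $\pi/(2c)$ and subtracting $1$ yields
\[
\frac{n\pi}{2c} - 1 > \frac{\delta}{\pi c}\log\frac{4e\pi c}{\delta} = \frac{s}{4}\log\frac{16e}{s}.
\]

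Next I would invoke the monotonicity of the map $x \mapsto H(x)\cdot G(H(x))$, asserted at the end of Theorem~\ref{thm_big_hg}, to transfer the strict inequality above through $H\cdot G(H)$, obtaining
\[
H\left(\frac{n\pi}{2c}-1\right)\cdot G\left(H\left(\frac{n\pi}{2c}-1\right)\right) > H\left(\frac{s}{4}\log\frac{16e}{s}\right)\cdot G\left(H\left(\frac{s}{4}\log\frac{16e}{s}\right)\right).
\]
Applying the explicit lower bound of Theorem~\ref{thm_big_hg} to the right-hand side and substituting $s = 4\delta/(\pi c)$ back in,
\[
\frac{\pi}{4}\cdot s\cdot\left(1-\frac{s}{8}\right) = \frac{\delta}{c}\cdot\left(1-\frac{\delta}{2\pi c}\right),
\]
so multiplying through by $c$ produces $c\cdot H\!\left(\tfrac{n\pi}{2c}-1\right)\cdot G\!\left(H\!\left(\tfrac{n\pi}{2c}-1\right)\right) > \delta\,(1 - \delta/(2\pi c))$, which combined with \eqref{eq_gh} yields \eqref{eq_gh_simple}.

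There is no serious analytic obstacle here: inverting $f$, bounding $H$ and $G$, and relating the elliptic-integral expression to $c\cdot H\cdot G(H)$ were all carried out in Theorems~\ref{thm_exp_term}--\ref{thm_big_hg}. The only point demanding care is the bookkeeping of the substitution $s = 4\delta/(\pi c)$: one must verify both that it sends $0 < \delta < 5\pi c/4$ precisely onto $0 < s < 5$, and that it reproduces the threshold of \eqref{eq_n_good} exactly, so that the constants $4e$ and $16e$ match. Any mismatch in these constants would break the clean cancellation, so the proof is in essence a verification that the hypotheses of Theorem~\ref{thm_simple_exp} were engineered to align with the bounds of Theorem~\ref{thm_big_hg} under this rescaling.
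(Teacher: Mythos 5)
Your proposal is correct and follows essentially the same route as the paper's own proof: the substitution $s = 4\delta/(\pi c)$, the identification of $\frac{s}{4}\log\frac{16e}{s}$ with $\frac{\delta}{\pi c}\log\frac{4e\pi c}{\delta}$, the monotonicity and lower bound from Theorem~\ref{thm_big_hg}, and the final combination with \eqref{eq_gh} are exactly the steps the paper takes.
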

\begin{proof}
It follows from \eqref{eq_n_good} that
\begin{align}
\frac{\pi n}{2c} - 1 > \frac{1}{\pi} \cdot \frac{\delta}{c} \cdot
                       \log\left( \frac{4e\pi c}{\delta} \right).
\label{eq_n_good2}
\end{align}
We define the real number $s>0$ via the formula
\begin{align}
s = \frac{4\delta}{\pi c},
\label{eq_s_delta}
\end{align}
and observe that $0 < s < 5$ due to \eqref{eq_delta}.
We combine \eqref{eq_n_good2}, \eqref{eq_s_delta}
and Theorem~\ref{thm_big_hg} to obtain
\begin{align}
& H\left( \frac{n\pi}{2c} - 1 \right) \cdot
          G\left( H\left( \frac{n\pi}{2c} - 1 \right) \right) > \nonumber \\
& H\left( 
     \frac{1}{\pi} \cdot \frac{\delta}{c} \cdot
                       \log\left( \frac{4e\pi c}{\delta} \right)
 \right) \cdot
          G\left( H\left( 
\frac{1}{\pi} \cdot \frac{\delta}{c} \cdot
                       \log\left( \frac{4e\pi c}{\delta} \right)
\right) \right) = \nonumber \\
& H\left( \frac{s}{4} \cdot \log \frac{16e}{s} \right) \cdot
G\left( H\left( \frac{s}{4} \cdot \log \frac{16e}{s} \right) \right) \geq
 \frac{\pi}{4} \cdot s \cdot \left(1 - \frac{s}{8}\right)
= \frac{\delta}{c} \cdot \left(1 - \frac{\delta}{2\pi c}\right).
\label{eq_hg_2}
\end{align}
We substitute \eqref{eq_hg_2} into
the inequality \eqref{eq_gh} in Theorem~\ref{thm_exp_term}
to obtain \eqref{eq_gh_simple}.
\end{proof}
In the following theorem, we derive an upper bound on $\chi_n$
in terms of $n$.
\begin{thm}
Suppose that $n$ is a positive integer, and that
\begin{align}
\frac{2c}{\pi} < n \leq
\frac{2c}{\pi} + \frac{2}{\pi^2} \cdot \delta \cdot
    \log\left( \frac{4e\pi c}{\delta} \right) - 3,
\label{eq_khi_delta}
\end{align}
for some
\begin{align}
3 < \delta < \frac{5 \pi}{4} \cdot c.
\label{eq_delta_khi}
\end{align}
Then,
\begin{align}
\frac{\chi_n - c^2}{c^2} < \frac{8}{\pi} \cdot \frac{\delta}{c}.
\label{eq_khi_3_ineq}
\end{align}
\label{thm_khi_n_upper}
\end{thm}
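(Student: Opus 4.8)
The plan is to combine the right-hand estimate on $\chi_n$ from Theorem~\ref{thm_exp_term} with the explicit upper bound on the function $H$ from Theorem~\ref{thm_big_h}, using the same substitution $s = 4\delta/(\pi c)$ that already appears in the proof of Theorem~\ref{thm_simple_exp}. First I would invoke the right-hand inequality of \eqref{eq_khi_via_h}, which gives
\begin{align}
\frac{\chi_n - c^2}{c^2} < H\left( \frac{n\pi}{2c} - 1 + \frac{3\pi}{2c} \right).
\end{align}
The hypothesis \eqref{eq_khi_delta} on $n$ is precisely $n + 3 \leq \frac{2c}{\pi} + \frac{2}{\pi^2}\delta \log\left(\frac{4e\pi c}{\delta}\right)$; multiplying through by $\pi/(2c)$ rearranges this into
\begin{align}
\frac{n\pi}{2c} - 1 + \frac{3\pi}{2c} \leq \frac{1}{\pi}\cdot\frac{\delta}{c}\cdot\log\left(\frac{4e\pi c}{\delta}\right).
\end{align}
Since $H$ is monotonically increasing (established in the proof of Theorem~\ref{thm_exp_term}), the two displays combine to bound $(\chi_n-c^2)/c^2$ above by $H$ evaluated at the right-hand side of the second display.

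Next I would set $s = 4\delta/(\pi c)$ and verify, exactly as in the proof of Theorem~\ref{thm_simple_exp}, that $\frac{s}{4}\log\frac{16e}{s} = \frac{1}{\pi}\cdot\frac{\delta}{c}\log\frac{4e\pi c}{\delta}$, so that the argument of $H$ above is precisely of the canonical form required by Theorem~\ref{thm_big_h}. The constraint \eqref{eq_delta_khi}, namely $3 < \delta < \frac{5\pi}{4}c$, guarantees $0 < s < 5$, so the upper bound of \eqref{eq_h_bounds} applies and yields
\begin{align}
\frac{\chi_n - c^2}{c^2} < H\left(\frac{s}{4}\log\frac{16e}{s}\right) \leq s + \frac{s^2}{5}.
\end{align}

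Finally I would close with an elementary estimate: since $s < 5$ we have $s/5 < 1$, hence $s + \frac{s^2}{5} = s\left(1 + \frac{s}{5}\right) < 2s = \frac{8\delta}{\pi c}$, which is exactly \eqref{eq_khi_3_ineq}. The only place where the full strength of the hypotheses enters is this last step: the upper bound $\delta < \frac{5\pi}{4}c$ is precisely what forces $s < 5$, both to legitimize the application of Theorem~\ref{thm_big_h} and to close the factor-of-two gap between $s + \frac{s^2}{5}$ and $2s$. I do not anticipate any genuine obstacle, as the argument is a short chain of substitutions; the only care required is bookkeeping, namely confirming that the argument of $H$ matches the form $\frac{s}{4}\log\frac{16e}{s}$ and that the admissible range $0 \leq s \leq 5$ is respected throughout.
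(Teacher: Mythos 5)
Your proposal is correct and follows essentially the same route as the paper's own proof: both pass from $(\chi_n-c^2)/c^2$ to $H\bigl(\tfrac{(n+3)\pi}{2c}-1\bigr)$ via \eqref{eq_khi_via_h}, use monotonicity of $H$ together with the hypothesis on $n$, apply the upper bound of \eqref{eq_h_bounds} with $s=4\delta/(\pi c)$, and close with $s+s^2/5<2s$ from $\delta<\tfrac{5\pi}{4}c$. The only content in the paper's proof not in yours is a side remark that the range \eqref{eq_khi_delta} is nonempty, which is not needed for the implication itself.
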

\begin{proof}
We combine \eqref{eq_khi_delta}, \eqref{eq_delta_khi},
\eqref{eq_khi_via_h} in Theorem~\ref{thm_exp_term}
and \eqref{eq_h_bounds} in Theorem~\ref{thm_big_h} to obtain
\begin{align}
\frac{\chi_n - c^2}{c^2} & \; < 
H\left( \frac{(n+3) \cdot \pi}{2c} - 1 \right) <
H \left(
\frac{1}{\pi} \cdot \frac{\delta}{c} \cdot
                       \log\left( \frac{4e\pi c}{\delta} \right)
\right) < \frac{4\delta}{\pi c} \cdot 
  \left(1 + \frac{4}{5\pi} \cdot \frac{\delta}{c} \right),
\label{eq_khi_3_long}
\end{align}
which implies \eqref{eq_khi_3_ineq}. We also observe that 
\eqref{eq_delta_khi} implies
\begin{align}
\frac{2}{\pi^2} \cdot \delta \cdot
    \log\left( \frac{4e\pi c}{\delta} \right) - 3 > 1.3,
\label{eq_khi_not_empty}
\end{align}
and hence there exist integer $n$ that satisfy \eqref{eq_khi_delta}.
\end{proof}
In the following theorem, we derive an upper bound on
the non-exponential term of $\zeta(n,c)$, defined via
\eqref{eq_zeta_n_c} in Theorem~\ref{thm_big_inequality}.
\begin{thm}
Suppose that $n$ is an even positive integer, and that
\begin{align}
\frac{2c}{\pi} < n \leq
\frac{2}{\pi} c + \frac{2}{\pi^2} \cdot \delta \cdot
    \log\left( \frac{4e\pi c}{\delta} \right) - 3,
\label{eq_khi_delta2}
\end{align}
for some
\begin{align}
3 < \delta < \frac{5 \pi}{4} \cdot c.
\label{eq_delta_khi2}
\end{align}
Then,
\begin{align}
& \frac{7}{2|\psi_n(0)|} \cdot
\frac{ \left(4 \cdot \chi_n/c^2- 2 \right)^{4} }
     {       3 \cdot \chi_n/c^2 - 1            }  \cdot
\left(\chi_n-c^2\right)^{\frac{1}{4}} < \nonumber \\
& \frac{448}{3} \cdot \left(\frac{8}{\pi}\right)^{1/4} 
\cdot \delta^{1/4} \cdot c^{3/4} \cdot
\left(1 + \frac{6\delta}{\pi c}\right) \cdot
\left(1 + \frac{16\delta}{\pi c}\right)^{3}.
\label{eq_slow_term}
\end{align}
\label{thm_slow_term}
\end{thm}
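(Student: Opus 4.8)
The plan is to express the entire left-hand side of \eqref{eq_slow_term} in terms of the single quantity $u=(\chi_n-c^2)/c^2$ and then to bound it factor by factor. First I would note that $n>2c/\pi$ forces $\chi_n>c^2$ by Theorem~\ref{thm_n_and_khi}, so $u>0$; since the hypotheses \eqref{eq_khi_delta2}, \eqref{eq_delta_khi2} coincide exactly with those of Theorem~\ref{thm_khi_n_upper}, that theorem supplies the crucial estimate $u<8\delta/(\pi c)$. Rewriting $4\chi_n/c^2-2=2+4u$, $3\chi_n/c^2-1=2+3u$ and $\chi_n-c^2=c^2u$, the quantity to be bounded becomes $\frac{7}{2|\psi_n(0)|}\cdot\frac{(2+4u)^4}{2+3u}\cdot(c^2u)^{1/4}$, a product of three factors in $u$.

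Next I would treat the three factors separately. For the prefactor, Theorem~\ref{thm_psi0} (applicable since $n$ is even and $\chi_n>c^2$) gives $1/|\psi_n(0)|\le 4\sqrt{n\chi_n/c^2}$; combining this with the bound $n<\sqrt{\chi_n}$ --- which follows from Theorem~\ref{thm_n_khi_simple} together with the elementary fact that $E(c/\sqrt{\chi_n})<\pi/2$ for $c/\sqrt{\chi_n}\in(0,1)$ --- yields $\sqrt{n\chi_n/c^2}<\chi_n^{3/4}/c=c^{1/2}(1+u)^{3/4}$. For the rational factor, the point is that $\frac{(2+4u)^4}{2+3u}=\frac{16(1+2u)}{2+3u}\,(1+2u)^3$, and that $\frac{16(1+2u)}{2+3u}$ is increasing in $u$ with supremum $32/3$ (its limit as $u\to\infty$), so this factor stays strictly below $\frac{32}{3}(1+2u)^3$. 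Finally $(c^2u)^{1/4}=c^{1/2}u^{1/4}<(8/\pi)^{1/4}\delta^{1/4}c^{1/4}$ by the estimate $u<8\delta/(\pi c)$.

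Assembling these three bounds, the product collapses to $\frac{448}{3}(8/\pi)^{1/4}\delta^{1/4}c^{3/4}(1+u)^{3/4}(1+2u)^3$, where the constant $\frac{448}{3}=\frac{7}{2}\cdot 4\cdot\frac{32}{3}$ emerges automatically and the powers $c^{1/2}\cdot c^{1/4}=c^{3/4}$ recombine cleanly. The two remaining monomials in $u$ are then converted into the advertised form using $u<8\delta/(\pi c)$: directly $(1+2u)^3<\left(1+\frac{16\delta}{\pi c}\right)^3$, and, via the Bernoulli inequality $(1+u)^{3/4}\le 1+\tfrac34 u<1+\frac{6\delta}{\pi c}$, the factor $(1+u)^{3/4}$ is absorbed into $\left(1+\frac{6\delta}{\pi c}\right)$. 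This is precisely \eqref{eq_slow_term}. The only genuinely delicate steps are obtaining the sharp constant $32/3$ as the exact supremum of $16(1+2u)/(2+3u)$ (a weaker bound here would inflate $\frac{448}{3}$) and routing $\sqrt{n}$ through the inequality $n<\sqrt{\chi_n}$ so that all powers of $\chi_n$ recombine into the single factor $(1+u)^{3/4}$; everything else is routine bookkeeping, and I would verify at each invocation that the parity of $n$, the inequality $\chi_n>c^2$, and the $(n,\delta)$-range required by the cited theorems are in force.
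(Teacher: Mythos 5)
Your proposal is correct and follows essentially the same route as the paper's proof: the same three-factor decomposition, the bound $(\chi_n-c^2)/c^2<8\delta/(\pi c)$ from Theorem~\ref{thm_khi_n_upper}, the identical constant $32/3$ for the rational factor, the chain $1/|\psi_n(0)|\le 4\sqrt{n\chi_n/c^2}<4\chi_n^{3/4}/c$ via Theorems~\ref{thm_psi0} and~\ref{thm_n_khi_simple}, and the Bernoulli-type estimate producing the factor $1+6\delta/(\pi c)$. No substantive differences.
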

\begin{proof}
We use \eqref{eq_khi_3_ineq} to obtain
\begin{align}
\frac{ \left(4 \cdot \chi_n/c^2- 2 \right)^{4} }
     {       3 \cdot \chi_n/c^2 - 1            } & \; =
\frac{4}{3} \cdot
\frac{\left(4 \cdot (\chi_n-c^2)/c^2 + 2\right)^4 }
     { 4 \cdot (\chi_n-c^2)/c^2 + 8/3 } \nonumber \\
& \; < \frac{32}{3} \cdot \left(1 + 2 \cdot \frac{\chi_n-c^2}{c^2} \right)^3
 < \frac{32}{3} \cdot \left( 1 + \frac{16 \delta}{ \pi c} \right)^3.
\label{eq_slow_1}
\end{align}
Then, we use \eqref{eq_khi_3_ineq} to obtain
\begin{align}
\left(\chi_n-c^2\right)^{\frac{1}{4}} < 
\left( \frac{8 \delta c}{\pi} \right)^{\frac{1}{4}}.
\label{eq_slow_2}
\end{align}
Next, we combine Theorems~\ref{thm_n_khi_simple}, \ref{thm_psi0}
in Section~\ref{sec_pswf} with Theorem~\ref{thm_khi_n_upper} to obtain
\begin{align}
\frac{1}{| \psi_n(0) |} 
& \; < 4 \cdot \sqrt{n} \cdot \sqrt{\frac{\chi_n}{c^2}} 
< \frac{4}{c} \cdot \left( \chi_n \right)^{\frac{3}{4}}
< \frac{4}{c} \cdot c^{\frac{3}{2}} \cdot 
\left(1 + \frac{8\delta}{\pi c}\right)^{\frac{3}{4}}  <
4 \cdot c^{\frac{1}{2}} \cdot \left(1 + \frac{6\delta}{\pi c}\right).
\label{eq_slow_3}
\end{align}
We combine \eqref{eq_slow_1}, \eqref{eq_slow_2}, \eqref{eq_slow_3}
to obtain \eqref{eq_slow_term}.
\end{proof}
The following theorem is one of the principal results
of this subsection.
\begin{thm}
Suppose that $c>0$ is a real number, and that
\begin{align}
c>22.
\label{eq_crude_c22}
\end{align}
Suppose also that $\delta>0$ is a real number, and that
\begin{align}
3 < \delta < \frac{\pi c}{16}.
\label{eq_delta_crude}
\end{align}
Suppose, in addition, that $n$ is a positive integer, and that
\begin{align}
n \geq \frac{2c}{\pi}  + \frac{2}{\pi^2} \cdot \delta \cdot
    \log\left( \frac{4e\pi c}{\delta} \right).
\label{eq_n_crude}
\end{align}
Suppose furthermore that the real number $\xi(n,c)$ is defined
via the formula
\begin{align}
\xi(n,c) = 7056 \cdot c \cdot 
\exp\left[-\delta\left(1 - \frac{\delta}{2\pi c}\right) \right].
\label{eq_xi_n_c}
\end{align}
Then, 
\begin{align}
| \lambda_n | < \xi(n,c).
\label{eq_crude_inequality}
\end{align}
\label{thm_crude_inequality}
\end{thm}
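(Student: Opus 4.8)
The plan is to combine the principal bound $|\lambda_n| < \zeta(n,c)$ of Theorem~\ref{thm_big_inequality} with the two simplification estimates already prepared for exactly this purpose: Theorem~\ref{thm_simple_exp}, which controls the exponential factor of $\zeta$, and Theorem~\ref{thm_slow_term}, which controls its algebraic (non-exponential) prefactor. Writing $\zeta(n,c)$ as the product of its algebraic prefactor with the term $\exp[-\sqrt{\chi_n}(F(\cdot)-E(\cdot))]$, I would first feed the lower bound $\delta(1-\delta/(2\pi c)) < \sqrt{\chi_n}(F(\cdot)-E(\cdot))$ of Theorem~\ref{thm_simple_exp} (whose hypotheses $\delta<\tfrac{5\pi}{4}c$ and $n>T_\delta:=\frac{2c}{\pi}+\frac{2}{\pi^2}\delta\log(4e\pi c/\delta)$ both hold here) into the exponential, obtaining $\exp[-\sqrt{\chi_n}(F(\cdot)-E(\cdot))] < \exp[-\delta(1-\delta/(2\pi c))]$, which is precisely the exponential appearing in $\xi(n,c)$.

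The only genuine difficulty is the algebraic prefactor, and it is a bookkeeping-of-ranges issue: Theorem~\ref{thm_slow_term}, together with the underlying $\chi_n$-estimate of Theorem~\ref{thm_khi_n_upper}, controls the prefactor only for $n\le T_\delta-3$, whereas Theorem~\ref{thm_crude_inequality} assumes $n\ge T_\delta$. I would reconcile this via the monotonicity $|\lambda_0|\ge|\lambda_1|\ge\cdots$ from the ordering convention of Section~\ref{sec_pswf} (Theorem~\ref{thm_pswf_main}): since $\xi(n,c)$ depends on $n$ only through the fixed $\delta$, it is constant in $n$, so it suffices to prove the bound at the smallest admissible index $m\approx T_\delta$ and then invoke $|\lambda_n|\le|\lambda_m|$. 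For that smallest even index one still has $\chi_m>c^2+42$ by Theorem~\ref{thm_khi_2} and Remark~\ref{rmk_42}, so Theorem~\ref{thm_big_inequality} applies; and $m$ lies below the upper endpoint of the range of Theorem~\ref{thm_slow_term} once the parameter is enlarged from $\delta$ to some $\delta^\ast$ with $m\le T_{\delta^\ast}-3$.

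Tracking this enlargement is where the constant is decided, and it is worth doing carefully because it is exactly what produces the number $7056$. Since $m$ sits at (rather than three below) the $\delta$-threshold, the effective value usable in the $\chi_n$-bound of Theorem~\ref{thm_khi_n_upper} is essentially $\delta^\ast\le 2\delta$, i.e. $\frac{\chi_m-c^2}{c^2}<\frac{16\delta}{\pi c}$; feeding $\delta^\ast<\pi c/8$ (which follows from $\delta<\pi c/16$) into the prefactor bound of Theorem~\ref{thm_slow_term} gives $(\delta^\ast)^{1/4}c^{3/4}<(\pi/8)^{1/4}c$, $(1+6\delta^\ast/(\pi c))<7/4$ and $(1+16\delta^\ast/(\pi c))^3<27$, whence the prefactor is at most $\frac{448}{3}\cdot\frac{7}{4}\cdot 27\cdot c=7056\,c$ (the factor $(8/\pi)^{1/4}(\pi/8)^{1/4}=1$ cancels). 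Multiplying by the exponential bound from the first step yields $|\lambda_m|<7056\,c\,\exp[-\delta(1-\delta/(2\pi c))]=\xi(n,c)$, and then $|\lambda_n|\le|\lambda_m|<\xi(n,c)$ closes the argument.

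I expect the main obstacle to be precisely this range reconciliation: verifying that an admissible even index $m$ with $T_\delta\le m\le T_{\delta^\ast}-3$ and $\delta^\ast<\pi c/8$ exists for every $(c,\delta)$ in the stated ranges. This is delicate only when $\delta$ is small and $c$ is close to $22$, where $T_{\delta^\ast}-T_\delta$ is barely large enough to contain an even integer; fortunately in that regime the bound is non-vacuous to spare, since $|\lambda_n|\le|\lambda_0|<\sqrt{2\pi/c}$ already lies far below $\xi(n,c)$, so a short separate check (or a parity shift $m\to m-1$ absorbed into the generous constant) disposes of it. The hypothesis $c>22$ is what guarantees enough slack for these boundary estimates.
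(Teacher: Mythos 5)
Your proposal follows essentially the same route as the paper's proof: bound the exponential factor of $\zeta(n,c)$ via Theorem~\ref{thm_simple_exp} with the original $\delta$, bound the algebraic prefactor via Theorem~\ref{thm_slow_term} applied with an enlarged parameter $\hat\delta<\pi c/8$ chosen so that $n$ equals the $\hat\delta$-threshold minus $3$ (the existence of such $\hat\delta$ being exactly where $c>22$ enters), and pass to general $n$ by the monotonic decay of $|\lambda_n|$, with the constant arising as $\frac{448}{3}\cdot\frac{7}{4}\cdot 27=7056$ just as in the paper. The one loose point is your assertion that $\delta^{\ast}\le 2\delta$ ``follows from'' $\delta<\pi c/16$ --- for $\delta$ near $3$ the doubled parameter need not absorb the shift by $3$ --- but the inequality $\delta^{\ast}<\pi c/8$ that you actually feed into Theorem~\ref{thm_slow_term} is precisely what the paper derives from $c>22$, so the argument goes through unchanged.
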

\begin{proof}
Suppose first that $n$ is an even positive integer of the form
\begin{align}
n = \frac{2c}{\pi} + \frac{2}{\pi^2} \cdot \delta \cdot
    \log\left( \frac{4e\pi c}{\delta} \right),
\label{eq_crude_two_deltas}
\end{align}
for some $3 < \delta < \pi c/16$ (in other words,
\eqref{eq_n_crude} is an identity rather than an inequality).
We observe that, for all real $t > 0$, 
\begin{align}
\frac{d}{dt} \left(
t \cdot \log \left( \frac{4e\pi c}{t} \right)
\right) = \log\left( \frac{4\pi c}{t} \right).
\label{eq_crude_der_log}
\end{align}
We combine \eqref{eq_crude_c22} with \eqref{eq_crude_der_log} to obtain
\begin{align}
\frac{2}{\pi^2} \cdot \left(
\frac{\pi c}{8} \cdot \log\left( \frac{4e\pi c}{(\pi c)/8} \right) -
\delta \cdot \log\left( \frac{4e\pi c}{\delta} \right) 
\right) & \; >
\frac{2}{\pi^2} \cdot
\left(\frac{\pi c}{8}-\delta\right) \cdot 
\log\left(\frac{4\pi c}{(\pi c)/8}\right) \nonumber \\
& \;  > \frac{c}{8\pi} \cdot \log \left( 32 \right) > 3.
\end{align}
Therefore, it is possible to choose a real number $\hat{\delta}$
such that
\begin{align}
3 < \hat{\delta} <  \frac{\pi c}{8},
\label{eq_delta_hat_range}
\end{align}
and also
\begin{align}
n = \frac{2c}{\pi} + \frac{2}{\pi^2} \cdot \hat{\delta} \cdot
    \log\left( \frac{4e\pi c}{\hat{\delta}} \right) - 3.
\label{eq_crude_delta_hat}
\end{align}
Due to the combination of \eqref{eq_delta_hat_range},
\eqref{eq_crude_delta_hat} and Theorem~\ref{thm_slow_term},
\begin{align}
& \frac{7}{2|\psi_n(0)|} \cdot
\frac{ \left(4 \cdot \chi_n/c^2- 2 \right)^{4} }
     {       3 \cdot \chi_n/c^2 - 1            }  \cdot
\left(\chi_n-c^2\right)^{\frac{1}{4}} < \nonumber \\
& \frac{448}{3} \cdot c^{1/4} \cdot c^{3/4} \cdot
\left(1 + \frac{6}{8}\right) \cdot
\left(1 + \frac{32}{16}\right)^{3} = 7056 \cdot c.
\label{eq_slow_term_c}
\end{align}
We observe that the right-hand side of \eqref{eq_slow_term_c}
is independent of $\hat{\delta}$. We combine 
this observation with \eqref{eq_slow_term_c},
\eqref{eq_big_inequality} in Theorem~\ref{thm_big_inequality},
\eqref{eq_gh_simple} in Theorem~\ref{thm_simple_exp},
and the fact that $|\lambda_n|$ decrease monotonically with $n$,
to obtain \eqref{eq_crude_inequality}.
\end{proof}
\begin{definition}[$\delta(n)$]
Suppose that $n$ is a positive integer, and that
\begin{align}
\frac{2c}{\pi} < n < \frac{10c}{\pi}.
\label{eq_range_for_delta}
\end{align}
We define the real number $\delta(n)$ to be the solution of the equation
\begin{align}
n = \frac{2c}{\pi} + \frac{2}{\pi^2} \cdot X \cdot
    \log\left( \frac{4e\pi c}{X} \right),
\label{eq_def_delta_n}
\end{align}
in the unknown $X$ in the interval $0 < X < 4 \pi c$.
\label{def_delta_n}
\end{definition}
\begin{remark}
We observe that the right-hand side of \eqref{eq_def_delta_n}
is an increasing function of $X$ in the range
$0 < X < 4\pi c$,
due to \eqref{eq_crude_der_log}
in the proof of Theorem~\ref{thm_crude_inequality}. 
Therefore, $\delta(n)$ is well defined.
\label{rem_crude}
\end{remark}
In the following theorem, we derive yet another upper bound
on $|\lambda_n|$.
\begin{thm}
Suppose that $n>0$ is a positive integer, and that $n > (2c/\pi) + \sqrt{42}$.
Suppose also that the real number $x_n$ is defined via the formula
\begin{align}
x_n = \frac{\chi_n}{c^2}.
\label{eq_lambda_khi_1}
\end{align}
Then,
\begin{align}
|\lambda_n| < 
1195 \cdot c \cdot (x_n)^{\frac{3}{4}} \cdot (x_n-1)^{\frac{1}{4}} \cdot
\left(x_n-\frac{1}{2}\right)^3 \cdot
\exp\left[ -\frac{\pi}{4} \cdot \left(\sqrt{x_n}-\frac{1}{\sqrt{x_n}} \right)
\cdot c \right].
\label{eq_lambda_khi}
\end{align}
\label{thm_lambda_khi}
\end{thm}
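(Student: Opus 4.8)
The plan is to start from the principal bound \eqref{eq_big_inequality} in Theorem~\ref{thm_big_inequality} and replace each of the three non-exponential factors of $\zeta(n,c)$, together with the exponent, by expressions depending only on $c$ and $x_n=\chi_n/c^2$. First, for even $n$ the hypothesis $n>(2c/\pi)+\sqrt{42}$ forces $\chi_n>c^2+42$ by Theorem~\ref{thm_khi_2} (see also Remark~\ref{rmk_42}), so that $x_n>1$ and Theorem~\ref{thm_big_inequality} applies, giving $|\lambda_n|<\zeta(n,c)$ with $\zeta(n,c)$ as in \eqref{eq_zeta_n_c}. I would then rewrite every occurrence of $\chi_n$ through $x_n$, using $\sqrt{\chi_n}=c\sqrt{x_n}$, $\chi_n-c^2=c^2(x_n-1)$, and, setting $k=\sqrt{(\chi_n-c^2)/\chi_n}=\sqrt{1-1/x_n}$, the identity $k^2=(x_n-1)/x_n$.

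Next, each factor is over-estimated separately. The exponent is controlled by the elementary inequality \eqref{eq_F_minus_E}, which gives $F(k)-E(k)>(\pi/4)k^2$ and hence $\sqrt{\chi_n}\,(F(k)-E(k))>(\pi/4)\,c\sqrt{x_n}\cdot(x_n-1)/x_n=(\pi/4)\,c(\sqrt{x_n}-1/\sqrt{x_n})$; replacing the true exponential by the larger quantity $\exp[-(\pi/4)c(\sqrt{x_n}-1/\sqrt{x_n})]$ reproduces exactly the exponential factor in \eqref{eq_lambda_khi}. For the reciprocal of $|\psi_n(0)|$ I would combine Theorem~\ref{thm_psi0}, which bounds it by $4\sqrt{n\,x_n}$, with the estimate $n<c\sqrt{x_n}$ that follows from \eqref{eq_both_large_simple_prop} in Theorem~\ref{thm_n_khi_simple} together with $E(c/\sqrt{\chi_n})\le\pi/2$; this yields $1/|\psi_n(0)|<4\sqrt{c}\,x_n^{3/4}$. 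The factor $(\chi_n-c^2)^{1/4}$ equals $\sqrt{c}\,(x_n-1)^{1/4}$, and the rational factor is simplified by writing $(4x_n-2)^4=256(x_n-1/2)^4$ and applying the elementary inequality $(x_n-1/2)/(3x_n-1)<1/3$, valid for all $x_n>1$, to obtain $(4x_n-2)^4/(3x_n-1)<(256/3)(x_n-1/2)^3$.

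Multiplying these four over-estimates, the two factors $\sqrt{c}$ combine into $c$, the powers of $x_n$ assemble into $x_n^{3/4}(x_n-1)^{1/4}(x_n-1/2)^3$, and the numerical constant is $(7/2)\cdot 4\cdot(256/3)=3584/3<1195$. Since every replacement enlarges the corresponding factor, the product remains an upper bound for $\zeta(n,c)$, and the result for even $n$ is exactly the right-hand side of \eqref{eq_lambda_khi}. The only bookkeeping care needed here is to check that each substitution preserves the inequality direction, which holds because all factors are positive and the exponent estimate genuinely increases the exponential.

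The main obstacle is the odd case, because both Theorem~\ref{thm_big_inequality} and Theorem~\ref{thm_psi0} are stated only for even $n$ (for odd $n$ one has $\psi_n(0)=0$, and the Legendre expansion of Section~\ref{sec_expansion} runs over the odd polynomials). For odd $n$ I would either repeat the development of Section~\ref{sec_expansion} verbatim with the odd normalized Legendre polynomials and with $\psi_n'(0)$ in place of $\psi_n(0)$, producing a bound of identical form, or else pass to an adjacent even index via the monotonicity of $|\lambda_n|$ in $n$. The latter route is the delicate part: comparing to the even index $n-1$ replaces $x_n$ by $x_{n-1}<x_n$, and the bounding function is decreasing in its argument, so one must argue carefully about index matching and verify that the neighbouring even index still satisfies $\chi>c^2+42$; this is where I expect the proof to require the most attention.
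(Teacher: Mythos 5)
Your argument for even $n$ is, step for step, the paper's own proof: the paper bounds the rational factor by $\tfrac{256}{3}(x_n-\tfrac12)^3$, uses Theorems~\ref{thm_n_khi_simple} and~\ref{thm_psi0} to get $1/|\psi_n(0)|<4\sqrt{c}\,x_n^{3/4}$, writes $(\chi_n-c^2)^{1/4}=\sqrt{c}\,(x_n-1)^{1/4}$, and lower-bounds the exponent via $F(k)-E(k)>\tfrac{\pi}{4}k^2$ with $k^2=(\chi_n-c^2)/\chi_n$, arriving at the same constant $\tfrac{7}{2}\cdot 4\cdot\tfrac{256}{3}=\tfrac{3584}{3}<1195$. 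The odd-$n$ issue you flag is real but is not resolved in the paper either --- its proof simply invokes Theorem~\ref{thm_big_inequality}, which is stated only for even $n$ --- so your proposal is if anything more careful than the source on this point, and neither of your suggested repairs is needed to match what the paper actually proves.
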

\begin{proof}
We use \eqref{eq_lambda_khi_1} to obtain
\begin{align}
\frac{ \left(4 \cdot \chi_n/c^2- 2 \right)^{4} }
     {       3 \cdot \chi_n/c^2 - 1            } & \; =
\frac{4}{3} \cdot
\frac{\left(4 \cdot (\chi_n-c^2)/c^2 + 2\right)^4 }
     { 4 \cdot (\chi_n-c^2)/c^2 + 8/3 }
< \frac{256}{3} \cdot \left(x_n - \frac{1}{2} \right)^3.
\label{eq_lambda_khi_a}
\end{align}
Next, we combine Theorems~\ref{thm_n_khi_simple}, \ref{thm_psi0}
in Section~\ref{sec_pswf}
and \eqref{eq_lambda_khi_1} to obtain
\begin{align}
\frac{1}{| \psi_n(0) |} 
& \; < 4 \cdot \sqrt{n} \cdot \sqrt{\frac{\chi_n}{c^2}} 
< \frac{4}{c} \cdot \left( \chi_n \right)^{\frac{3}{4}} =
  4 \sqrt{c} \cdot (x_n)^{\frac{3}{4}}.
\label{eq_lambda_khi_b}
\end{align}
We combine \eqref{eq_lambda_khi_a} and \eqref{eq_lambda_khi_b} to obtain
\begin{align}
\frac{7}{2} \cdot 
\frac{\left(\chi_n-c^2\right)^{\frac{1}{4}}}{| \psi_n(0) |} \cdot 
\frac{ \left(4 \cdot \chi_n/c^2- 2 \right)^{4} }
     {       3 \cdot \chi_n/c^2 - 1            } <
1195 \cdot c \cdot (x_n)^{\frac{3}{4}} \cdot (x_n-1)^{\frac{1}{4}} \cdot
\left(x_n-\frac{1}{2}\right)^3.
\label{eq_lambda_khi_c}
\end{align}
Also, we combine 
\eqref{eq_F}, \eqref{eq_E} in Section~\ref{sec_elliptic} with
\eqref{eq_fme} in the proof of Theorem~\ref{thm_integral} to obtain
\begin{align}
F\left( \sqrt{\frac{\chi_n-c^2}{\chi_n}} \right) -
E\left( \sqrt{\frac{\chi_n-c^2}{\chi_n}} \right)
> \frac{\pi}{4} \cdot \frac{\chi_n-c^2}{\chi_n}.
\label{eq_lambda_khi_d}
\end{align}
We combine \eqref{eq_lambda_khi_1},
\eqref{eq_lambda_khi_c}, \eqref{eq_lambda_khi_d}
with Theorems~\ref{thm_khi_2},~\ref{thm_big_inequality} to obtain
\eqref{eq_lambda_khi}.
\end{proof}
We conclude this subsection with the following theorem,
that describes the behavior of the upper bound $\nu(n,c)$
on $|\lambda_n|$ (see \eqref{eq_nu}, \eqref{eq_lambda_nu}
in Theorem~\ref{thm_rokhlin} in Section~\ref{sec_pswf}).
\begin{thm}
Suppose that
$n$ is a positive integer, and that
\begin{align}
\frac{2}{\pi} \cdot c \leq 
n < \left( \frac{2}{\pi} + \frac{1}{25} \right) \cdot c.
\label{eq_n_nu}
\end{align}
Then, 
\begin{align}
\nu(n,c) \geq \frac{1}{10},
\label{eq_nu_gc_1}
\end{align}
where $\nu(n,c)$ is defined via \eqref{eq_nu} in
Theorem~\ref{thm_rokhlin} in Section~\ref{sec_pswf}.
\label{thm_nu}
\end{thm}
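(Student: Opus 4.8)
The plan is to turn $\nu(n,c)$ from \eqref{eq_nu} into an elementary expression, bound it below by standard Stirling-type estimates, and then use the two-sided coupling between $n$ and $c$ to show that the resulting lower bound never drops below $1/10$.

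First I would eliminate the Gamma function. Using the duplication identity $\Gamma(n+3/2) = \sqrt{\pi}\,(2n+1)!/(2\cdot 4^n\, n!)$, the definition \eqref{eq_nu} in Theorem~\ref{thm_rokhlin} becomes
\[
\nu(n,c) = \frac{2\,(4c)^n\,(n!)^3}{(2n+1)\,[(2n)!]^2} = \frac{2\,(4c)^n}{(2n+1)\,n!\,\binom{2n}{n}^2}.
\]
Note that $\nu(n,c)$ is increasing in $c$ for fixed $n$, so the binding case is the smallest admissible $c$; I would keep $c$ explicit and invoke the lower bound on $c$ only at the end. Inserting the classical bounds $\binom{2n}{n}\le 4^n/\sqrt{\pi n}$ and $n!\le e\sqrt{n}\,(n/e)^n$, both valid for $n\ge 1$, together with $2\pi n/(2n+1)\ge 2\pi/3$, yields
\[
\nu(n,c) \ge \frac{2\pi n}{(2n+1)\,n!}\left(\frac{c}{4}\right)^n \ge \frac{2\pi}{3e\sqrt{n}}\left(\frac{ce}{4n}\right)^n.
\]

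Finally I would bring in the hypothesis $n<(2/\pi+1/25)c$, which gives $c/n > (2/\pi+1/25)^{-1}$ and hence $ce/(4n) > a$, where $a := e/\bigl(4(2/\pi+1/25)\bigr) = 25\pi e/\bigl(4(50+\pi)\bigr) \approx 1.0044 > 1$. Thus $\nu(n,c) \ge \tfrac{2\pi}{3e}\,a^n/\sqrt{n}$, and it remains to minimize $a^n/\sqrt{n}$ over $n\ge 1$. A one-variable calculus computation places the continuous minimum at $n^\ast = 1/(2\ln a)\approx 114$, with value $\sqrt{2e\ln a}$, so
\[
\nu(n,c) \ge \frac{2\pi}{3e}\sqrt{2e\ln a} = \frac{2\pi}{3}\sqrt{\frac{2\ln a}{e}} \approx 0.119 > \frac{1}{10},
\]
which is the claim.

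The main obstacle is that this inequality is genuinely tight: the final margin over $1/10$ is only about twenty percent, so the binomial bound, the Stirling bound on $n!$, the constant $a$, and especially the evaluation of the minimum of $a^n/\sqrt{n}$ near $n\approx 114$ must all be carried out quantitatively and honestly, with no slack to absorb loose constants. The delicate phenomenon is that the exponential factor $a^n$, with $a$ only barely exceeding $1$, must overcome the $1/\sqrt{n}$ decay; verifying that their worst-case product still clears $1/10$ is precisely where the constants in the hypotheses (the $1/25$ and the threshold $1/10$) are being used, and getting a clean rigorous version that actually clears the bound rather than missing it by a hair is the heart of the argument.
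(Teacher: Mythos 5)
Your proposal is correct and follows essentially the same route as the paper: both proofs reduce $\nu(n,c)$ via Stirling-type estimates to a lower bound of the form $(\text{const})\cdot n^{-1/2}\cdot\left(ce/(4n)\right)^n$, use the hypothesis $n<(2/\pi+1/25)\,c$ to make the base of the exponential exceed $1$ by the small margin $a\approx 1.0044$, and then perform a one-variable optimization (you minimize $a^n/\sqrt{n}$ over $n$, landing near $n\approx 115$; the paper equivalently minimizes $-\tfrac12\log c + c/500$ over $c$, landing at $c=250$). Your version is in fact somewhat more explicit than the paper's (which compresses the Stirling step into ``elementary calculations'') and your final constant $\approx 0.1185$ clears $1/10$ with a more comfortable margin than the paper's $\approx 0.104$.
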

\begin{proof}
We carry out elementary calculations, involving the
well known Stirling's approximation formula for the gamma function,
to obtain the inequality
\begin{align}
\nu(n,c) \geq \frac{\sqrt{2\pi n}}{2n+1} \cdot 
              \left( \frac{ce}{4n} \right)^n,
\label{eq_nu_1}
\end{align}
for all $n$ in the range \eqref{eq_n_nu}.
We use \eqref{eq_nu_1} to obtain the inequality
\begin{align}
\log (\nu(n,c)) & \; > \log \frac{1}{\sqrt{n}} + 
  n \cdot \log \left(  \frac{ce}{4n} \right) \nonumber \\
 & \; > -\frac{1}{2} \cdot \log(c) +
   \left( \frac{2}{\pi} + \frac{1}{25} \right) \cdot c \cdot
   \log\left( \frac{e/4}{2/\pi + 1/25} \right) \nonumber \\
 & \; > -\frac{1}{2} \cdot \log(c) + \frac{c}{500} 
 \geq \frac{1}{2} \cdot \left(1-\log(250)\right) > -2.27.
\label{eq_nu_2}
\end{align}
The inequality \eqref{eq_nu_gc_1} follows directly from \eqref{eq_nu_2}.
\end{proof}
\begin{remark}
According to Theorem~\ref{thm_nu}, the inequality \eqref{eq_lambda_nu}
of Theorem~\ref{thm_rokhlin} in Section~\ref{sec_pswf} is trivial
for all integer $n < (2/\pi+1/25)\cdot c$. In particular, for such $n$
this inequality is useless.
\label{rem_nu}
\end{remark}

\section{Numerical Results}
\label{sec_numerical}
In this section,
we illustrate the results of Section~\ref{sec_analytical}
via several numerical experiments. All the calculations were
implemented in FORTRAN (the Lahey 95 LINUX version) and were
carried out in either
double or quadruple precision.
The algorithms for the evaluation of PSWFs and
the associated eigenvalues were based on \cite{RokhlinXiaoProlate}.


\subsection{Experiment 1}
\label{sec_exp1}
In this experiment, we demonstrate the behavior of $|\lambda_n|$
with $0 \leq n \leq 2c/\pi$, for several values of band limit $c>0$.

For each of five different values of $c=10,10^2,10^3,10^4,10^5$, 
we do the following. First, we evaluate 
$| \lambda_n |$ numerically, 
for $n = 0$, $n \approx c/\pi$ and $n \approx 2c/\pi$.
For each such $n$, we also compute
$\mu_n = (c/2\pi) \cdot | \lambda_n |$. Here $\lambda_n$ is
the $n$th eigenvalue of the integral operator $F_c$, and $\mu_n$ is
the $n$th eigenvalue of the integral operator $Q_c$
(see \eqref{eq_pswf_fc}, \eqref{eq_prolate_integral},
\eqref{eq_pswf_qc}, \eqref{eq_prolate_mu} 
in Section~\ref{sec_pswf}).
\begin{table}[htbp]
\begin{center}
\begin{tabular}{c|c|c|c|c}
$c$   &
$n$   &
$(\pi n) / (2c)$ &
$\left| \lambda_n \right|$ &
$ \mu_n = (c/2\pi) \cdot \left| \lambda_n \right|^2$ \\
\hline
    10 &      0 & 0.00000E+00 & 0.79267E+00 & 0.10000E+01  \\
    10 &      3 & 0.47124E+00 & 0.79183E+00 & 0.99790E+00  \\
    10 &      6 & 0.94248E+00 & 0.52588E+00 & 0.44015E+00  \\
\hline
   100 &      0 & 0.00000E+00 & 0.25066E+00 & 0.10000E+01  \\
   100 &     31 & 0.48695E+00 & 0.25066E+00 & 0.10000E+01  \\
   100 &     63 & 0.98960E+00 & 0.18589E+00 & 0.54997E+00  \\
\hline
  1000 &      0 & 0.00000E+00 & 0.79267E-01 & 0.10000E+01  \\
  1000 &    318 & 0.49951E+00 & 0.79267E-01 & 0.10000E+01  \\
  1000 &    636 & 0.99903E+00 & 0.57640E-01 & 0.52877E+00  \\
\hline
 10000 &      0 & 0.00000E+00 & 0.25066E-01 & 0.10000E+01  \\
 10000 &   3183 & 0.49998E+00 & 0.25066E-01 & 0.10000E+01  \\
 10000 &   6366 & 0.99997E+00 & 0.16644E-01 & 0.44088E+00  \\
\hline
100000 &      0 & 0.00000E+00 & 0.79267E-02 & 0.10000E+01  \\
100000 &  31830 & 0.49998E+00 & 0.79267E-02 & 0.10000E+01  \\
100000 &  63661 & 0.99998E+00 & 0.60295E-02 & 0.57861E+00  \\
\end{tabular}
\end{center}
\caption{\it
Behavior of $\left| \lambda_n \right|$ for $0 \leq n \leq 2c/\pi$.
Corresponds to Experiment 1 in Section~\ref{sec_numerical}.
}
\label{t:test152}
\end{table}

In addition, we fix $c = 100$, and evaluate $|\lambda_n|$ numerically,
for all integer $n$ between $0$ and $2c/\pi$.

The results of Experiment 1 are shown in 
Table~\ref{t:test152} 
and Figure~\ref{fig:test150b}.
Table~\ref{t:test152} 
has the following structure.
The first two columns contain the band limit $c$ and the
prolate index $n$, respectively. The third column contains
the ratio of $n$ to $2c/\pi$. The fourth column contains $| \lambda_n|$.
The last column contains the eigenvalue $\mu_n$ of the integral operator
$Q_c$ (see \eqref{eq_pswf_qc}, \eqref{eq_prolate_mu} 
in Section~\ref{sec_pswf}).

In Figure~\ref{fig:test150b}, we plot $|\lambda_n|$,
corresponding to $c=100$, as a function of $n$,
for integer $n$ between $0$ and $2c/\pi$.

Several observations can be made from Table~\ref{t:test152}
and Figure~\ref{fig:test150b}.
\begin{itemize}
\item[{\bf 1.}] For all five values of band limit $c$, 
the eigenvalue $\mu_n$ decreases from $\approx 1$ to
$\approx 1/2$,
as $n$ increases from $0$
to $(2c/\pi)$. In other words, the first $2c/\pi$ eigenvalues $\lambda_n$
have roughly the same magnitude $ \approx \sqrt{2\pi/c}$.
This observation confirms Theorem~\ref{thm_mu_spectrum}
in Section~\ref{sec_pswf}.
\item[{\bf 2.}] Due to Theorem~\ref{thm_n_and_khi}
in Section~\ref{sec_pswf},
the bounds on the decay of $| \lambda_n |$,
established in
Section~\ref{sec_analytical},
hold for
$n$ greater than $2c/\pi$ only
(see also Remark~\ref{rmk_42}). Thus, Table~\ref{t:test152} 
indicates
that this assumption on $n$ is, 
in fact, not restrictive, since the first
$2c/\pi$ eigenvalues have roughly constant magnitude.
\end{itemize}

\subsection{Experiment 2}
\label{sec_exp2}
In this experiment, we illustrate Theorem~\ref{thm_big_inequality}.
As opposed to Experiment 1, we demonstrate the behavior of $|\lambda_n|$
for $n > 2c/\pi$.

In this experiment, we proceed as follows.
First, we pick band limit $c>0$ (more or less arbitrarily). 
Then, for each even integer $n$
in the range
\begin{align}
\frac{2c}{\pi} < n < \frac{2c}{\pi} + 20 \cdot \log(c),
\label{eq_n_range}
\end{align}
we evaluate numerically $| \lambda_n |$ and $\zeta(n,c)$,
where the latter is defined via \eqref{eq_zeta_n_c}
in Theorem~\ref{thm_big_inequality}. 

The results of Experiment 2 are shown in
Figures~\ref{fig:test151_c10} - \ref{fig:test151_2} and
in Table~\ref{t:test153}.
In Figures~\ref{fig:test151_c10} - \ref{fig:test151_2},
we plot both $\log(|\lambda_n|)$
and $\log(\zeta(n,c))$ as functions of $n$. 
Each of Figures~\ref{fig:test151_c10} - \ref{fig:test151_2}
corresponds to a certain value of band limit 
($c = 10, 10^2, 10^3, 10^4, 10^5$, respectively).

\begin{table}[htbp]
\begin{center}
\begin{tabular}{c|c|c|c|c|c|c}
$\varepsilon$ &
$c$   &
$n_1(\varepsilon)$ &
$\Delta_1(\varepsilon)$ &
$n_2(\varepsilon)$ &
$\Delta_2(\varepsilon)$ &
$n_2(\varepsilon) - n_1(\varepsilon)$ \\
\hline
$e^{-50}$ & 10     & 32 & 0.11133E+02 &     38 & 0.13738E+02 &      6  \\
$e^{-50}$ & $10^2$ & 107 & 0.94107E+01 &    114 & 0.10931E+02 &      7  \\
$e^{-50}$ & $10^3$ & 700 & 0.91752E+01 &    712 & 0.10912E+02 &     12  \\
$e^{-50}$ & $10^4$ & 6450 & 0.90987E+01 &   6468 & 0.11053E+02 &     18  \\
$e^{-50}$ & $10^5$ & 63765 & 0.89484E+01 &  63792 & 0.11294E+02 &     27  \\
\hline
$e^{-100}$ & 10     & 50 & 0.18950E+02 &     56 & 0.21556E+02 &      6  \\
$e^{-100}$ & $10^2$ & 138 & 0.16142E+02 &    146 & 0.17879E+02 &      8  \\
$e^{-100}$ & $10^3$ & 753 & 0.16848E+02 &    764 & 0.18440E+02 &     11  \\
$e^{-100}$ & $10^4$ & 6526 & 0.17350E+02 &   6542 & 0.19087E+02 &     16  \\
$e^{-100}$ & $10^5$ & 63864 & 0.17547E+02 &  63890 & 0.19806E+02 &     26  \\
\end{tabular}
\end{center}
\caption{\it
Illustration of Theorem~\ref{thm_big_inequality}. Corresponds
to Experiment 2 in Section~\ref{sec_numerical}.
}
\label{t:test153}
\end{table}
Table~\ref{t:test153} 
has the following structure. The first
column contains precision $\varepsilon = e^{-50}, e^{-100}$.
The second column contains band limit $c$. The third column contains
the integer $n_1(\varepsilon)$, defined via the formula
\begin{align}
n_1(\varepsilon) = \min_k\left\{ k > 2c/\pi \; : \; 
                                 |\lambda_k| < \varepsilon \right\}.
\label{eq_n1}
\end{align}
In other words, $n_1(\varepsilon)$ is the integer satisfying the inequality
\begin{align}
|\lambda_{n_1(\varepsilon)-1}| > \varepsilon >
|\lambda_{n_1(\varepsilon)}|.
\end{align}
The fourth column contains
$\Delta_1(\varepsilon)$, defined to be the difference
between $n_1(\varepsilon)$
and $2c/\pi$, scaled by $\log(c)$. In other words,
\begin{align}
\Delta_1(\varepsilon) = \frac{n_1(\varepsilon) - 2c/\pi}{\log(c)}.
\label{eq_delta_1}
\end{align}
The fifth column contains
the even integer $n_2(\varepsilon)$, defined via the formula
\begin{align}
n_2(\varepsilon) = \min_k\left\{ k > 2c/\pi \; : \; k \text{ is even}, \;
                                 |\zeta(k,c)| < \varepsilon \right\}.
\label{eq_n2}
\end{align}
In other words, $n_2(\varepsilon)$ is the even integer satisfying the inequality
\begin{align}
|\zeta(n_2(\varepsilon)-2,c)| > \varepsilon >
|\zeta(n_2(\varepsilon),c)|.
\end{align}
The sixth column contains 
$\Delta_2(\varepsilon)$, defined to be the difference
between $n_2(\varepsilon)$
and $2c/\pi$, scaled by $\log(c)$. In other words,
\begin{align}
\Delta_2(\varepsilon) = \frac{n_2(\varepsilon) - 2c/\pi}{\log(c)}.
\label{eq_delta_2}
\end{align}
The last column contains the difference between $n_2(\varepsilon)$
and $n_1(\varepsilon)$.

Several observations can be made
from 
Figures~\ref{fig:test151_c10} - \ref{fig:test151_2} and
Table~\ref{t:test153}.

\begin{itemize}
\item[{\bf 1.}] In all figures, $|\lambda_n| < \zeta(n,c)$,
as expected, which confirms
Theorem~\ref{thm_big_inequality}. 
\item[{\bf 2.}] For each $c$, both $|\lambda_n|$ and
$\zeta(n,c)$ decay roughly exponentially fast with $n$. 
\item[{\bf 3.}] For each $c$, both $|\lambda_n|$ and
$\zeta(n,c)$ decrease to roughly $e^{-125}$, as $n$
increases from $2c/\pi$ to $2c/\pi+20 \cdot \log(c)$.
In particular,
\begin{align}
\left| \lambda_{2c/\pi + 20 \cdot \log(c)} \right| \approx e^{-125},
\label{eq_lambda_20}
\end{align}
for $c = 10, 10^2, 10^3, 10^4, 10^5$. 
The fact that the right-hand side of \eqref{eq_lambda_20}
is the same for all $c$ is somewhat surprising. However,
this is not coincidental, as will be illustrated in
Experiment 3 below.
\item[{\bf 4.}] For $c=10^2,10^3,10^4,10^5$, it suffices to take
$n \approx 2c/\pi + 9 \cdot \log(c)$ to ensure that 
$|\lambda_n| \approx e^{-50}$ (see third column in Table~\ref{t:test153}).
In addition, it suffices to take 
$n \approx 2c/\pi + 17 \cdot \log(c)$ to ensure that 
$|\lambda_n| \approx e^{-100}$. In other words, 
\begin{align}
n_1(\varepsilon) \approx 
\frac{2c}{\pi} + 0.17 \cdot \log\left(\frac{1}{\varepsilon}\right)
                     \cdot \log(c),
\label{eq_n1_04}
\end{align}
where $n_1(\varepsilon)$ is defined via \eqref{eq_n1} above
(see also \eqref{eq_lambda_20}).
\item[{\bf 5.}]
For $c=10^2,10^3,10^4,10^5$, it suffices to take
$n \approx 2c/\pi + 11 \cdot \log(c)$ to ensure that 
$\zeta(n,c) \approx e^{-50}$ (see fifth column in Table~\ref{t:test153}).
In addition, it suffices to take 
$n \approx 2c/\pi + 19 \cdot \log(c)$ to ensure that 
$\zeta(n,c) \approx e^{-100}$. In other words, 
\begin{align}
n_2(\varepsilon) \approx 
\frac{2c}{\pi} + 0.2 \cdot \log\left(\frac{1}{\varepsilon}\right)
                     \cdot \log(c),
\label{eq_n2_05}
\end{align}
where $n_2(\varepsilon)$ is defined via \eqref{eq_n2} above
(see also \eqref{eq_lambda_20}, \eqref{eq_n1_04}).
\item[{\bf 6.}] The difference $n_2(\varepsilon)-n_1(\varepsilon)$
is roughly independent of $\varepsilon$, and grows only
slowly as $c$ increases (see last column of Table~\ref{t:test153}).
In other words, suppose that one needs to determine $n$ such
that $|\lambda_k|<e^{-50}$ for all $k \geq n$. 
Due to \eqref{eq_n1}, $n_1(e^{-50})$ would be the minimal such $n$. 
On the other hand, $n=n_2(e^{-50})$ is only larger by 6 for $c=10$
and by 27 for $c=10^5$.
\end{itemize}

\subsection{Experiment 3}
\label{sec_exp3}
In this experiment, we illustrate Theorem~\ref{thm_crude_inequality}.
We proceed as follows.
First, we pick band limit $c>0$ (more or less arbitrarily). 
Then, we define the positive integer $n_{\max}$ to be the minimal
even integer such that
\begin{align}
n_{\max} >
\frac{2c}{\pi}  + \frac{2}{\pi^2} \cdot 150 \cdot
    \log\left( \frac{4e\pi c}{150} \right)
\approx \frac{2c}{\pi} + 30.4 \cdot \log( 0.23 \cdot c).
\label{eq_nmax_exp3}
\end{align}
Then, for each positive even integer $n$ in the range
\begin{align}
\frac{2c}{\pi} < n < n_{\max},
\label{eq_n_range_exp3}
\end{align}
we evaluate the following quantities:
\begin{itemize}
\item the eigenvalue $\lambda_n$ of the operator $F_c$ (see
\eqref{eq_pswf_fc}, \eqref{eq_prolate_integral}
in Section~\ref{sec_pswf});
\item $\delta(n)$ of Definition~\ref{def_delta_n} in 
Section~\ref{sec_weaker};
\item $\zeta(n,c)$, defined via
\eqref{eq_zeta_n_c} in Theorem~\ref{thm_big_inequality}
in Section~\ref{sec_principal};
\item $\xi(n,c)$, defined via \eqref{eq_xi_n_c}
in Theorem~\ref{thm_crude_inequality} in Section~\ref{sec_weaker}.
\end{itemize}
The results of Experiment 3 are shown in
Figures~\ref{fig:test154_c10000}, \ref{fig:test154_c100000}, that
correspond, respectively, to band limit $c=10^4$ and $c=10^5$.
In each of 
Figures~\ref{fig:test154_c10000}, \ref{fig:test154_c100000}, 
we plot $\log(|\lambda_n|)$, $-\delta(n)$,
$\log(\zeta(n,c))$ and $\log(\xi(n,c))$ as functions of $n$.

Several observations can be made from
Figures~\ref{fig:test154_c10000}, \ref{fig:test154_c100000}, and
from more detailed experiments by the author.
\begin{itemize}
\item[{\bf 1.}] In both figures, 
\begin{align}
\log(|\lambda_n|) < -\delta(n) < \log(\zeta(n,c)) < \log(\xi(n,c)),
\end{align}
for all $n$. This observation confirms both
Theorem~\ref{thm_big_inequality} of Section~\ref{sec_principal}
and Theorem~\ref{thm_crude_inequality} of Section~\ref{sec_weaker}.
Also, $\xi(n,c)$ is weaker than $\zeta(n,c)$ as an upper bound
on $|\lambda_n|$, as expected.
\item[{\bf 2.}] All the four functions, plotted
in Figures~\ref{fig:test154_c10000}, \ref{fig:test154_c100000},
decay roughly exponentially with $n$. Moreover, 
\begin{align}
\log(|\lambda_n|) \approx \log\sqrt{ \frac{2\pi}{c} } - \delta(n),
\end{align}
in correspondence with Theorem~\ref{thm_approx} in Section~\ref{sec_pswf}.
In particular, even the weakest bound $\xi(n,c)$ correctly captures
the exponential decay of $|\lambda_n|$. On the other hand,
$\xi(n,c)$ overestimates $|\lambda_n|$ by a 
roughly constant factor of order $c^{3/2}$ (see also Section~\ref{sec_inac}).
\end{itemize}



\clearpage

\begin{figure} [t]
\begin{center}
\includegraphics[width=11.5cm, bb=81 217 549 564, clip=true]
{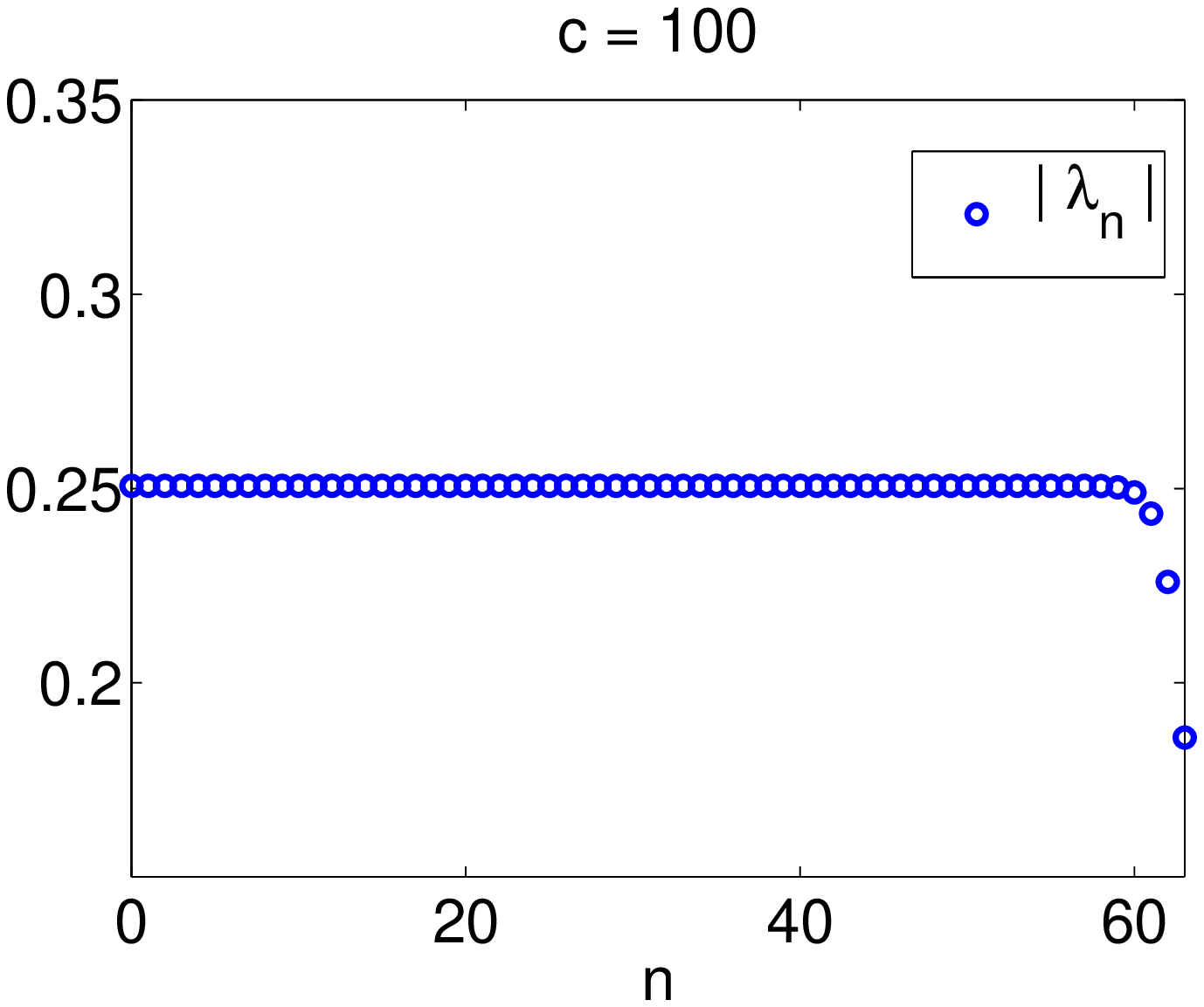}
\caption
{
Behavior of $|\lambda_n|$ for $0 < n < 2c/\pi$, with $c=100$.
Corresponds to Experiment 1 in Section~\ref{sec_numerical}.
}
\label{fig:test150b}
\end{center}
\end{figure}
\begin{figure} [h]
\begin{center}
\includegraphics[width=11.5cm, bb=81 217 549 564, clip=true]
{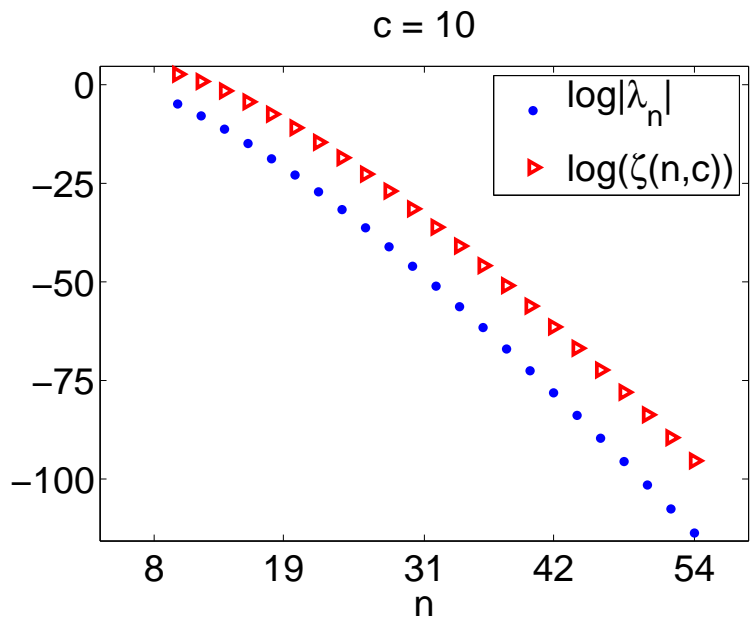}
\caption
{
Illustration of Theorem~\ref{thm_big_inequality} with
$c=10$. Corresponds to Experiment 2 in Section~\ref{sec_numerical}.
}
\label{fig:test151_c10}
\end{center}
\end{figure}
\clearpage

\begin{figure}[ht]
\centering
\subfigure[$c=100$.]{
\includegraphics[width=11.5cm, bb=81 217 549 564, clip=true]
{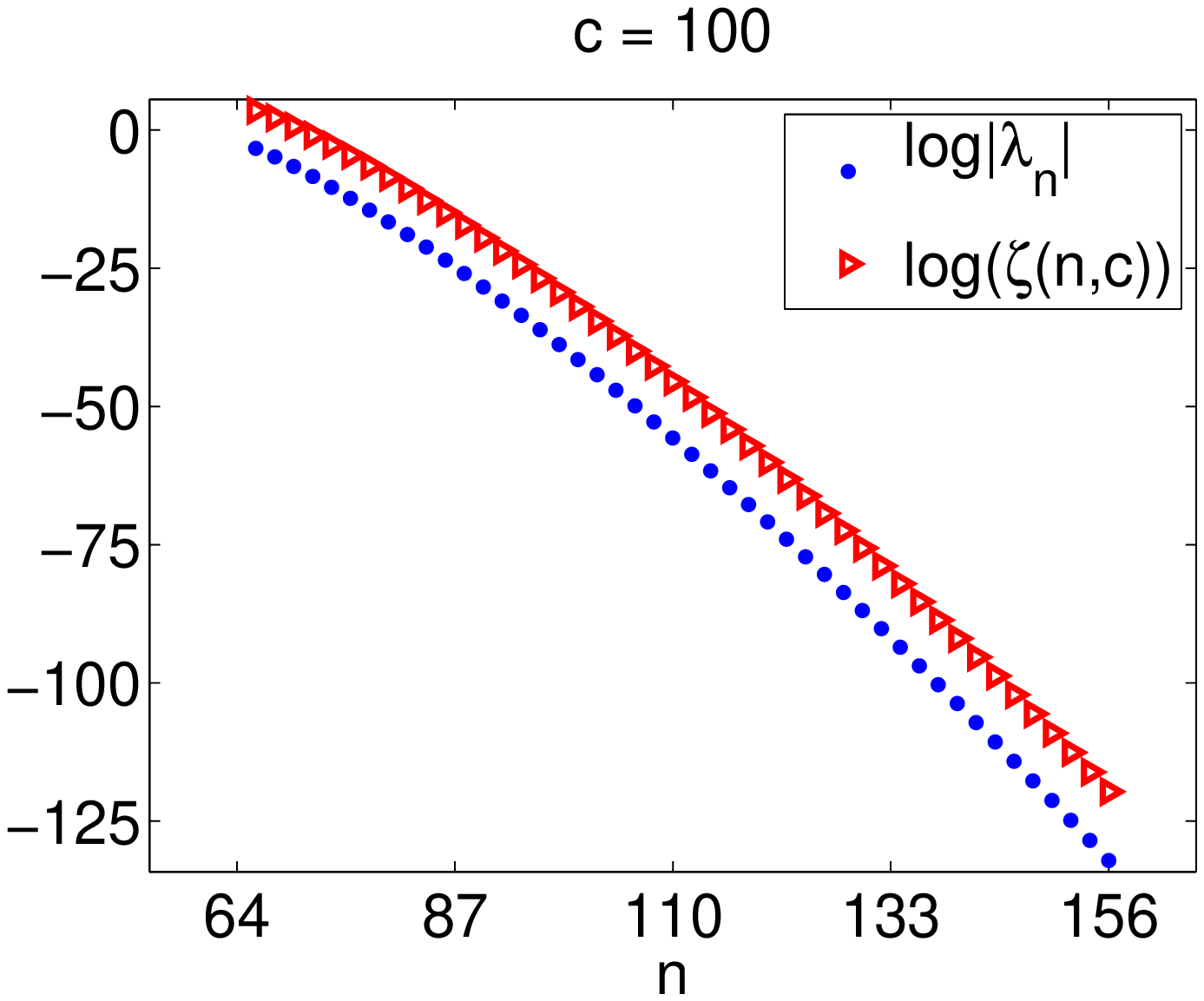}
\label{fig:test151_c100}
}
\subfigure[$c=1,000$.]{
\includegraphics[width=11.5cm, bb=81 217 549 564, clip=true]
{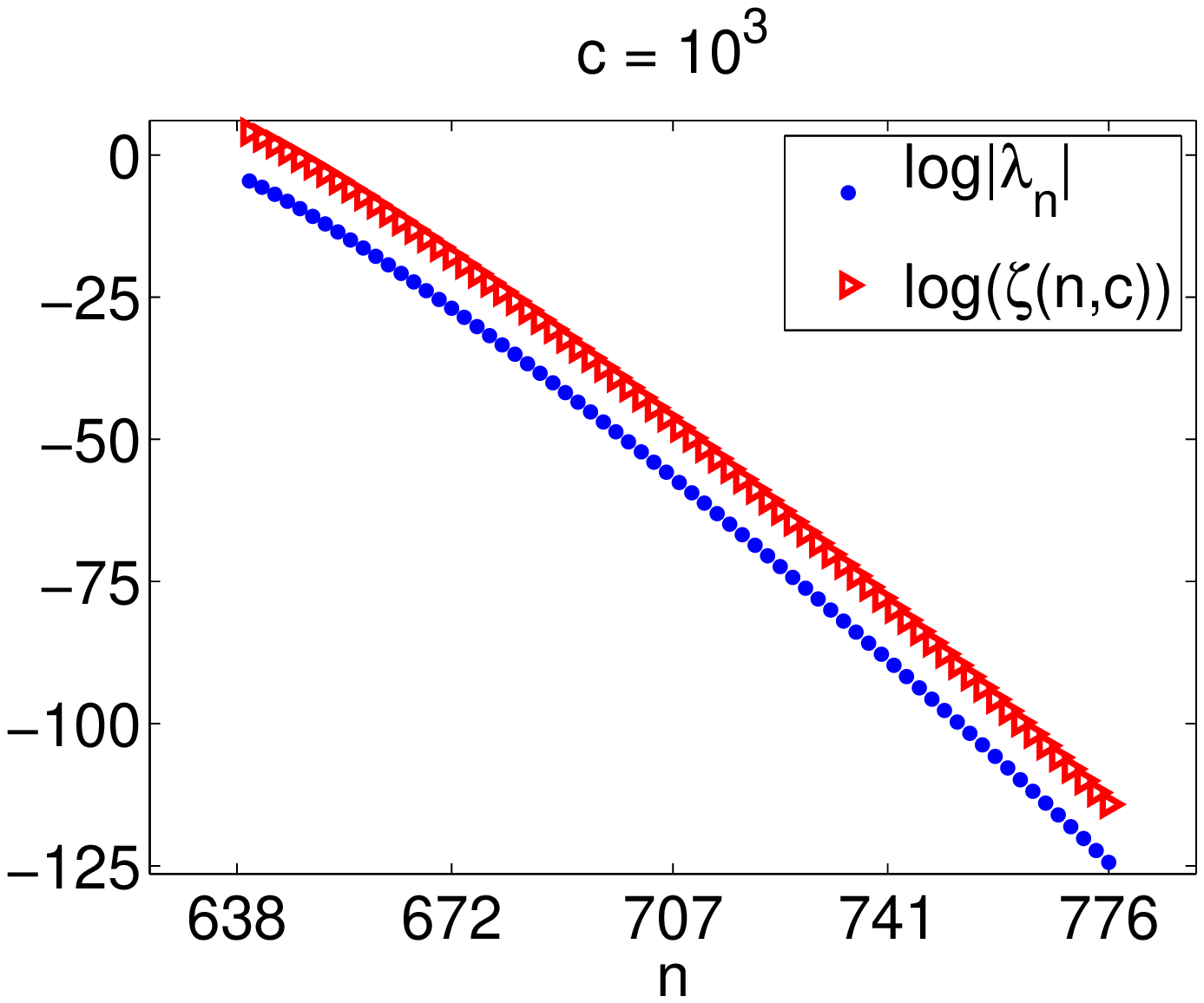}
\label{fig:test151_c1000}
}
\caption{ 
Illustration of Theorem~\ref{thm_big_inequality}. 
Corresponds to Experiment 2 in Section~\ref{sec_numerical}.
}
\label{fig:test151_1}
\end{figure}

\clearpage

\begin{figure}[ht]
\centering
\subfigure[$c=10,000$.]{
\includegraphics[width=11.5cm, bb=81 217 549 564, clip=true]
{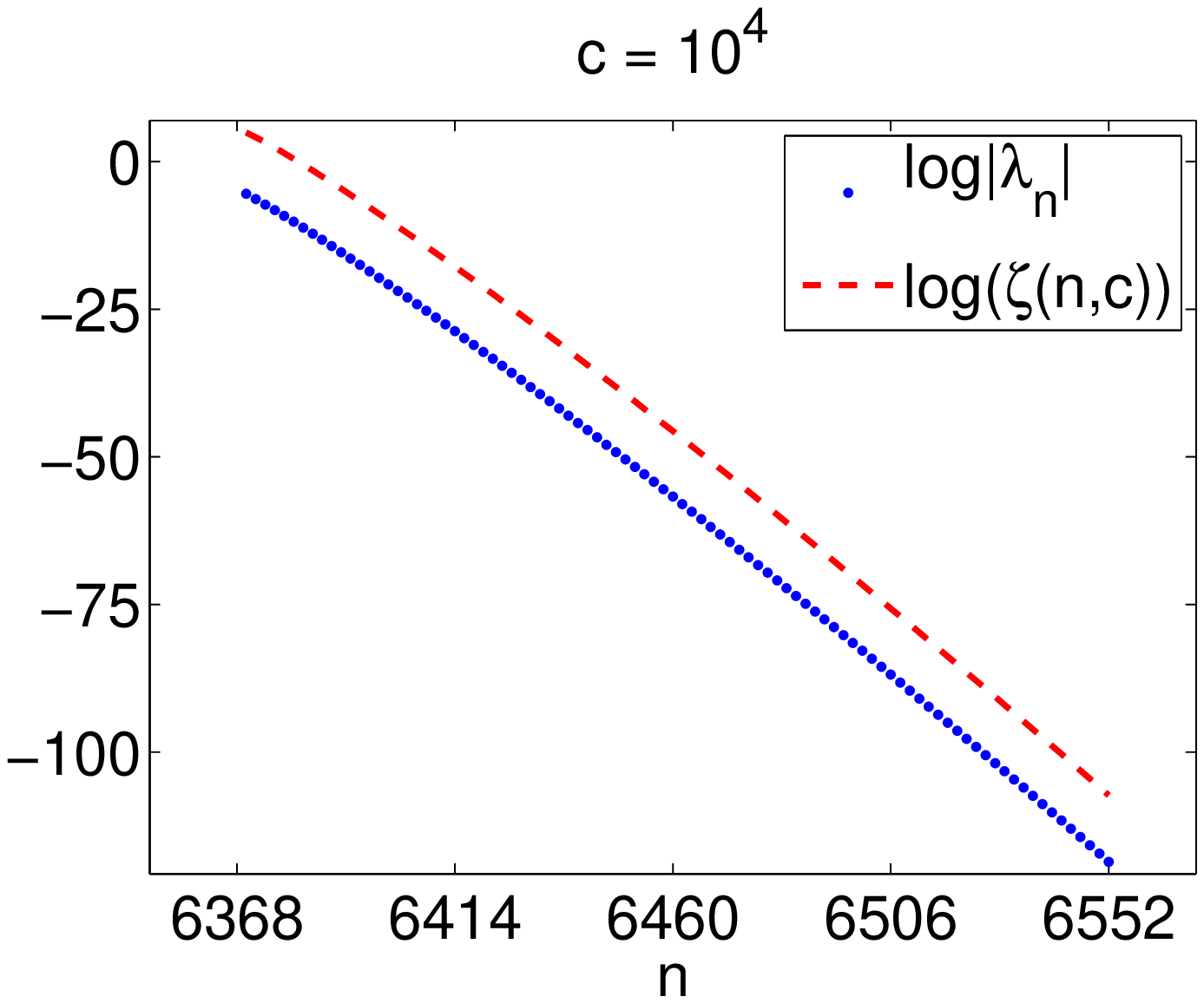}
\label{fig:test151_c10000}
}
\subfigure[$c=100,000$.]{
\includegraphics[width=11.5cm, bb=81 217 549 564, clip=true]
{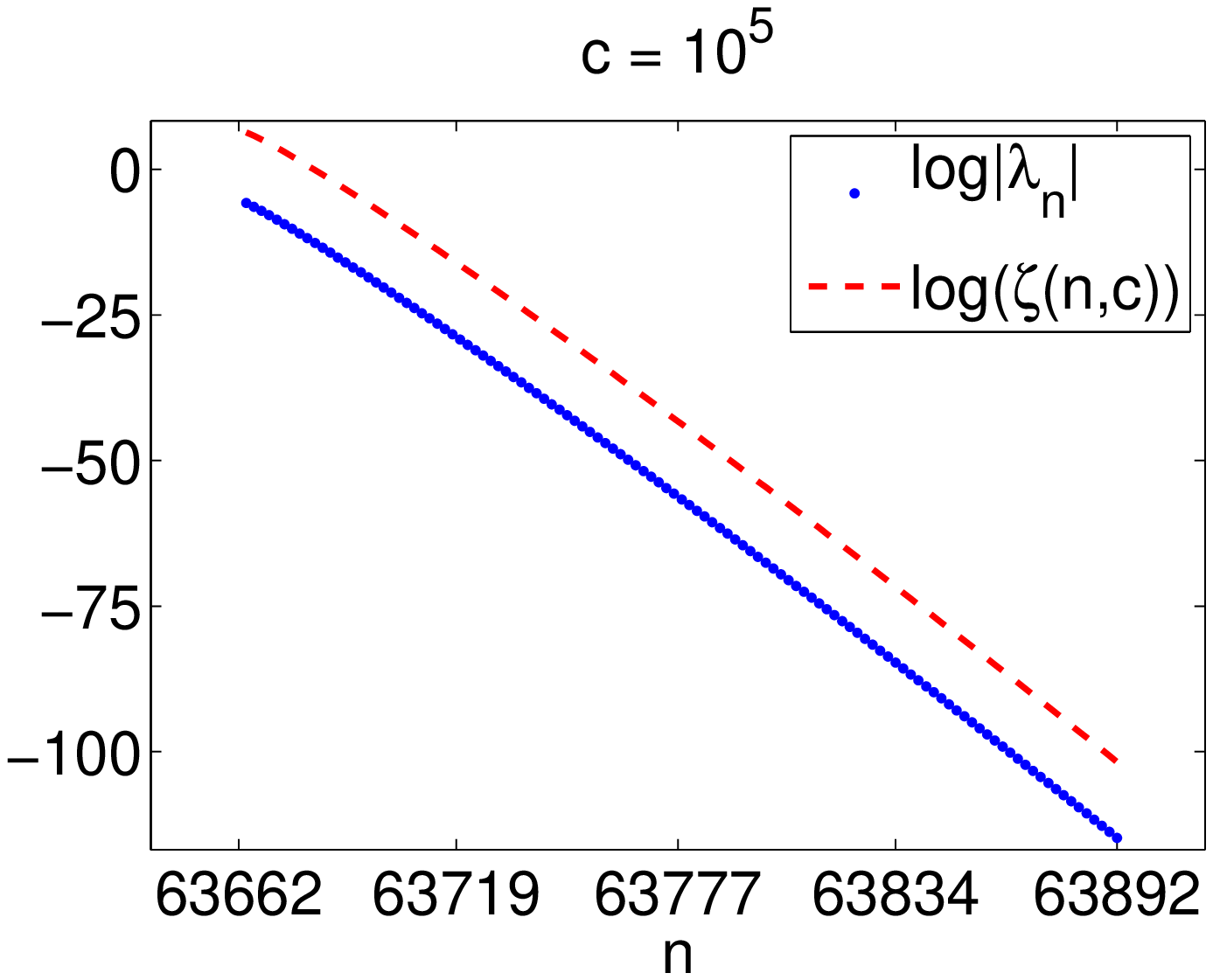}
\label{fig:test151_c100000}
}
\caption{ 
Illustration of Theorem~\ref{thm_big_inequality}. 
Corresponds to Experiment 2 in Section~\ref{sec_numerical}.
}
\label{fig:test151_2}
\end{figure}

\clearpage
\begin{figure}[ht]
\centering
\subfigure[$c=10,000$.]{
\includegraphics[width=11.5cm, bb=81 217 549 564, clip=true]
{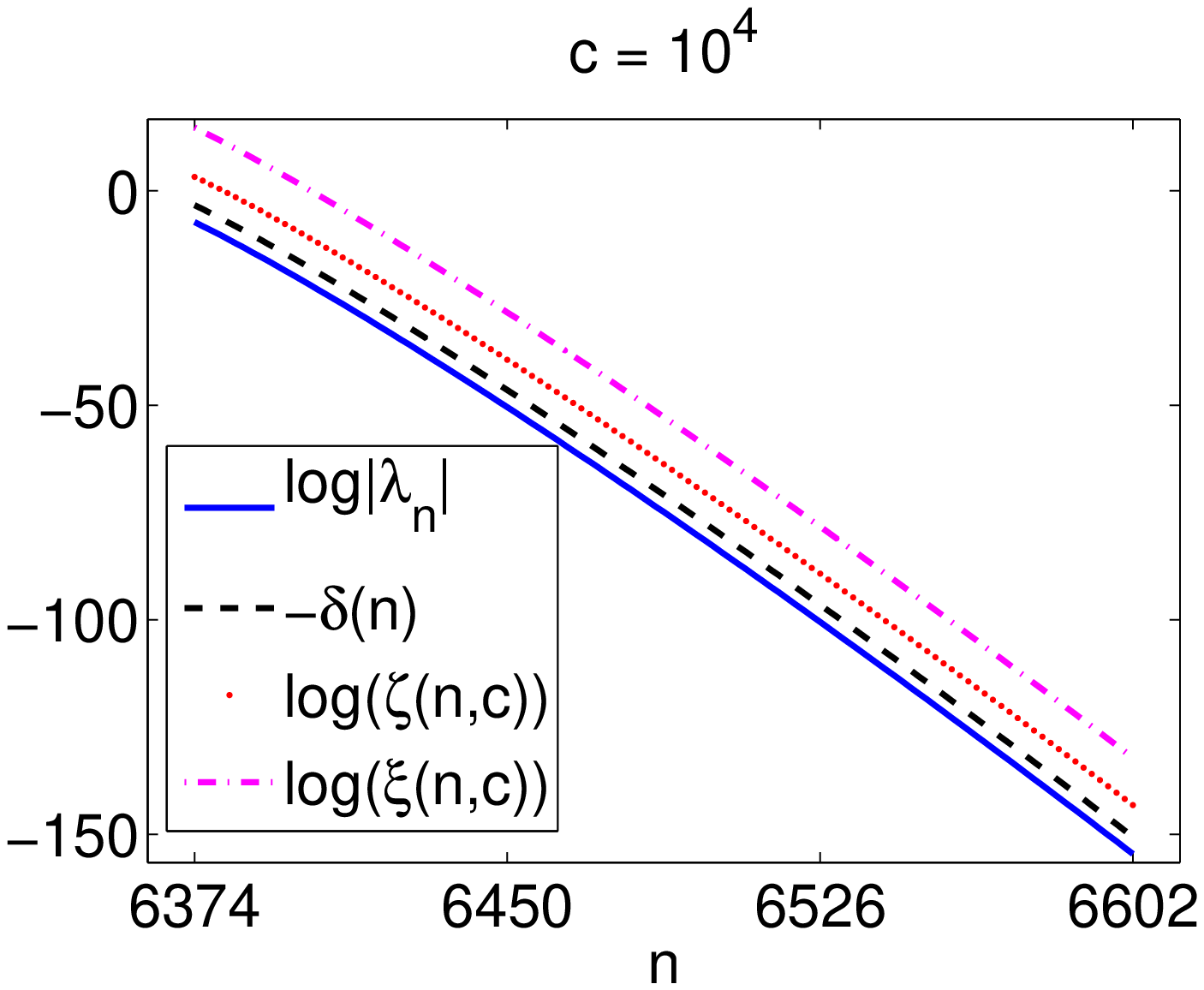}
\label{fig:test154_c10000}
}
\subfigure[$c=100,000$.]{
\includegraphics[width=11.5cm, bb=81 217 549 564, clip=true]
{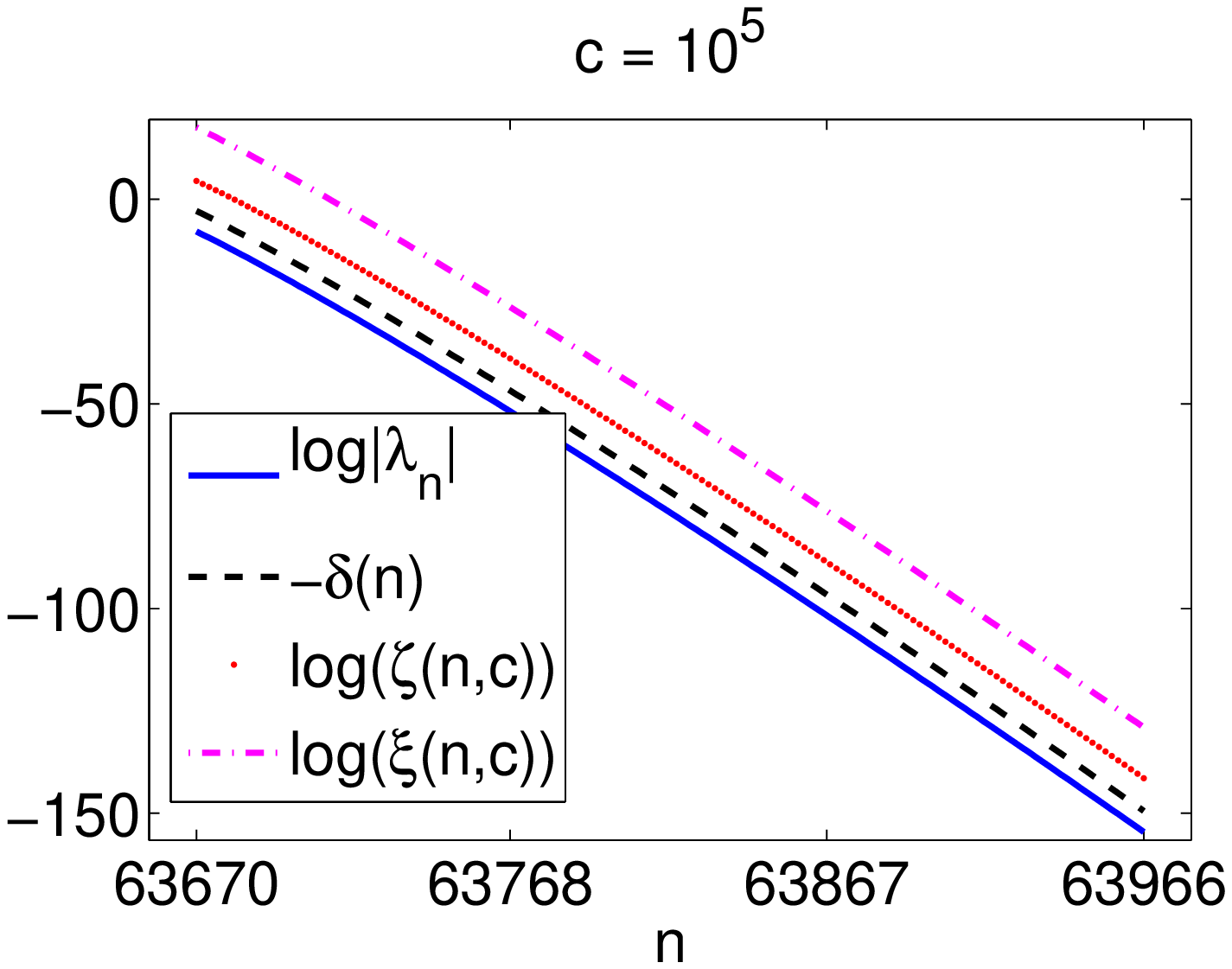}
\label{fig:test154_c100000}
}
\caption{ 
Illustration of Theorem~\ref{thm_crude_inequality}.
Corresponds to Experiment 3 in Section~\ref{sec_numerical}.
}
\label{fig:test154}
\end{figure}

\end{document}